\crefname{thm}{Theorem}{Theorems}
\newaliascnt{eqfloat}{equation}
\newcommand*{\ORGeqfloat}{}
\let\ORGeqfloat\eqfloat
\def\eqfloat{%
  \let\ORIGINALcaption\caption
  \def\caption{%
    \addtocounter{equation}{-1}%
    \ORIGINALcaption
  }%
  \ORGeqfloat
}
\theoremstyle{definition}
\newtheorem{thm}{Theorem}[section]
\newtheorem{prop}[thm]{Proposition}
\newtheorem{lm}[thm]{Lemma}
\newtheorem{cor}[thm]{Corollary}
\newtheorem{obs}[thm]{Observation}
\newtheorem{defin}[thm]{Definition}
\newtheorem{smpl}[thm]{Example}
\newtheorem{prob}[thm]{Problem}
\newtheorem{conj}[thm]{Conjecture}
\newtheorem{rem}[thm]{Remark}
\crefname{lm}{Lemma}{Lemmas}
\crefname{thm}{Theorem}{Theorems}
\crefname{prop}{Proposition}{Propositions}
\crefname{defin}{Definition}{Definitions}
\crefname{rem}{Remark}{Remarks}
\newcommand{\opi}{\vec{\boldsymbol{\pi}}}
\newcommand{\otau}{\vec{\boldsymbol{\tau}}}
\DeclareMathOperator{\Alg}{\mathrm{Alg}}
\DeclareMathOperator{\rest}{\mathbf{res}}
\DeclareMathOperator{\pat}{\mathbf{p}}
\DeclareMathOperator{\spn}{\mathrm{span}}
\begin{document}

\title{Pattern Hopf algebras} 


\author{Raul Penaguiao}
\email{raul.penaguiao@math.uzh.ch}
\address{University of Zurich, Institute of Mathematics}
\keywords{marked permutations, presheaves, species, Hopf algebras, free algebras}
\subjclass[2010]{05E05, 16T05, 18D10}
\date{\today} 

\begin{abstract}
In this paper, we expand on the notion of \textit{combinatorial presheaf}, first introduced explicitly by Aguiar and Mahajan in 2010 but already present in the literature in some other points of view.
We do this by adapting the algebraic framework of species to the study of substructures in combinatorics.
Afterwards, we consider functions that count the number of patterns of objects and endow the linear span of these functions with a product and a coproduct.
In this way, any well behaved family of combinatorial objects that admits a notion of substructure generates a Hopf algebra, and this association is functorial.
For example, the Hopf algebra on permutations studied by Vargas in 2014 and the Hopf algebra on symmetric functions are particular cases of this construction.

A specific family of pattern Hopf algebras of interest are the ones arising from \textit{commutative combinatorial presheaves}.
This includes the presheaves on graphs, posets and generalized permutahedra.
Here, we show that all the pattern Hopf algebras corresponding to commutative presheaves are free.

We also study a remarkable non-commutative presheaf structure on marked permutations, \textit{i.e.} permutations with a marked element.
These objects have a natural product called inflation, which is an operation motivated by factorization theorems for permutations.
In this paper we find new factorization theorems for marked permutations.
We use these theorems to show that the pattern Hopf algebra for marked permutations is also free, using Lyndon words techniques.
\end{abstract}

\maketitle

\tableofcontents

\section{Introduction}
%
%
%
%
%
%

The notion of substructure is important in mathematics, and particularly in combinatorics.
In graph theory, minors and induced subgraphs are the main examples of studied substructures.
Substructures of other objects have been studied: set partitions, trees, paths and, to a larger extent, permutations, where the study of patterns leads to the concept of permutation classes.

A priori unrelated, it has been shown that Hopf algebras are a natural tool in algebraic combinatorics to study discrete objects, like graphs, set compositions and permutations.
For instance, the celebrated Hopf algebra on permutations named after Malvenuto and Reutenauer sheds some light on the structure of shuffles in permutations.
Other examples of Hopf algebras in combinatorics that are relevant to this work are the Hopf algebra on symmetric functions (described for instance in \cite{stanley86}), and the permutation pattern Hopf algebra introduced by Vargas in \cite{vargas14}.

With that in mind, we build upon the notion of species with restrictions, as presented in \cite{aguiar10} by Aguiar and Mahajan, in order to connect these two areas of algebraic combinatorics.
Species with restrictions, or \textit{combinatorial presheaf}, arise by enriching a species with restriction maps.
With this combinatorial data, we produce a functorial construction of a pattern algebra $\mathcal A (h)$ from any given combinatorial presheaf $h$.
By further considering an associative product on our objects, we can endow $\mathcal{A}(h)$ with a coproduct that makes it a bialgebra, and under specific circumstances a Hopf algebra.
The main examples of combinatorial presheaves are words, graphs and permutations.
Examples of associative products on combinatorial objects are the disjoint union on graphs or the direct sum on permutations.

The algebras obtained from a combinatorial presheaf are always commutative.
In analyzing Hopf algebras, it is of particular interest to show that such algebras are free commutative (henceforth, we simply say \textit{free}), and to construct free generators of the algebra structure.
The fact that a Hopf algebra is a free algebra has several applications.
For instance, in \cite{foissy12}, it was shown that any graded free and cofree Hopf algebra is self dual.
Moreover, self dual Hopf algebras are characterized by studying their primitive elements.
The freeness of a Hopf algebra also allows us to gain some insight into the character group of the Hopf algebra; see for instance \cite{supina19}.
It can also be used under duality maps to establish cofreeness of Hopf monoids, as described in the methods of M\"obius inversion in \cite{sanchez19}.

We will use tools from the combinatorics of words and Lyndon words.
In fact, Lyndon words are commonly used to establish the freeness of algebras.
Examples are the shuffle algebra in \cite{radford79} (see also \cite[Chapter 6]{grinberg14}), the algebra of quasisymmetric functions in \cite[Theorem 8.1]{hazewinkel01}, and the algebra on word quasisymmetric functions in non-commutative variables, in \cite{bergeron09}.

Computing the primitive space of a specific Hopf algebra is of interest.
If the Hopf algebra $H = \bigcup_{n\geq 0} H^{(n)}$ is a filtered Hopf algebra with $P(H)$ primitive space, computing the dimension of $H^{(n)} \cap P(H)$ is a classic problem, having applications for instance in establishing that a given Hopf algebra is not a Hopf subalgebra of another one.
Thus, when we establish that a Hopf algebra is in fact a pattern Hopf algebra, we can use these tools to understand the primitive space and the coradical filtration of a Hopf algebra.
Conversely, this also gives us a very flexible tool to establish that a given Hopf algebra is in fact a patter Hopf algebra.
We use precisely this method to establish that the symmetric function Hopf algebra is a pattern Hopf algebra, and we conjecture that the same method gives us that the quasi-symmetric function Hopf algebra is also a pattern Hopf algebra.

In this paper, we show that any commutative combinatorial presheaf gives rise to a pattern algebra that is free commutative.
We also study a non-commutative combinatorial presheaf on marked permutations, where we establish the freeness, construct the free elements with the help of Lyndon words, and enumerate the primitive elements of the pattern Hopf algebra on marked permutations.
In the rest of this section, we present these results in more detail, and describe the methods for proving them.

\subsection{Pattern Hopf algebras from monoids in presheaves}

Let $\mathtt{Set}_{\hookrightarrow}$ be the category whose objects are finite sets and morphisms are injective maps between finite sets.
Let also $\mathtt{Set}_{\times}$ be the category whose objects are finite sets and morphisms are bijective maps between finite sets.
Write $\mathtt{Set}$ for the usual category of finite sets.

A \textbf{species} is a functor from $\mathtt{Set}_{\times}$ to $\mathtt{Set}$ (or equivalently, to itself).
Hence, a species $h$ is described by an assignment of each set $I$ to a finite set $h[I]$, together with some relabeling map for each bijection.
Species occur very naturally in combinatorics as a way of describing the combinatorial structures on finite sets, for instance graph structures on a vertex set or a poset on a ground set.

\begin{defin}[Combinatorial presheaves]
A \textit{combinatorial presheaf} (or a \textit{presheaf}, for short) is a contravariant functor from $\mathtt{Set}_{\hookrightarrow}$ to $\mathtt{Set}$.
A morphism of combinatorial presheaves is simply a natural transformation of functors.
In this way, we have the category $\mathtt{CPSh}$ of combinatorial presheaves.
This was introduced in \cite[Section 8.7.8.]{aguiar10}.
\end{defin}

In this form, a presheaf is simply a species enriched with restriction maps $\rest_J : h[I] \to h[J] $ for each inclusion $J\hookrightarrow I$ is a way that is functorial, that is if $J_1\subseteq J_2$, then $\rest_{J_2} \circ \rest_{J_1} = \rest_{J_1}$.
The notion of \textbf{presheaves} has been around in category theory and geometry for some time, where it generally refers to contravariant functors from the category of open sets of a topology with inclusions as morphisms.
The main examples of combinatorial presheaves are graphs, set compositions, and permutations; see \cref{smpl:permpresheaf}.
In general, any combinatorial object that admits a notion of restriction admits a presheaf structure.

The category $\mathtt{CPSh}$ and monoidal and bimonoidal structures thereof have been studied in \cite{aguiar10}. 
We will explain the connections between this prior work and our results in \cref{rem:aguiar}.

In presheaves, two objects $a\in h[I], b\in h[J]$ are said to be \textit{isomorphic objects}, or $a\sim b$, if there is a bijection $f:I\to J$ such that $h[f](b)=a$.
The set of equivalence classes in $h[I]$ is denoted $h[I]_{\sim}$.
Let $h[n]$ denote the objects of type $h$ on the set $[n] = \{1, \dots , n\}$.
The collection of equivalence classes of a presheaf $h$ is denoted by $\mathcal{G}(h) = \bigcup_{n\geq 0 } h[n]_{\sim }$.
In this way, the set $\mathcal G(h) $ is the collection of all the $h$-objects up to isomorphism.

If $b \in h[I]$ and $J \subseteq I$, we denote $b|_J$ for $h[\mathrm{inc}](b)$, where $\mathrm{inc}: J \hookrightarrow I$ is the natural inclusion map.

\begin{defin}[Patterns in presheaves]\label{defin:pattern}
Let $h$ be a presheaf, and consider two objects $a\in h[I], b\in h[J]$.
We say that $J'\subseteq J $ is a \textit{pattern} of $a$ in $b$ if $b|_{J'} \sim a$.
We define the \textit{pattern function} $$\pat_a( b) : = \left| \{J' \subseteq J \text{ such that } \rest_{J'}(b) \sim a \} \right| \,  . $$

\end{defin}

Fix a field $k$ with characteristic zero.
Denote the family of functions $A \to B$ by $\mathcal{F} (A, B)$.

This definition only depends on the isomorphism classes of $a$ and $b$; see \cref{prop:welldef}.
Hence, we can consider $\{ \pat_a \}_{a\in \mathcal{G}(h)}$ as a family of functions from $\mathcal{G}(h)$ to $k$, indexed by $\mathcal{G}(h)$.
In \cref{smpl:permpresheaf}, we see an example of a presheaf structure on permutations.

In the following we will be using the field of rational numbers, but to the goals of this paper it is only relevant to pick a field $\mathbb{k}$ that has characteristic zero.

If $h$ is a combinatorial presheaf, then the linear span of the pattern functions  is a linear subspace $\mathcal{A}(h) \subseteq \mathcal{F}(\mathcal{G}(h) , \mathbb{Q}) $ of the space of functions $\mathcal{G}(h) \to \mathbb{Q}$.

\begin{thm}\label{thm:algfunctor}
The vector space $\mathcal{A}(h)$ is closed under pointwise multiplication and has a unit.
It forms an algebra, called the \textit{pattern algebra}.
More precisely, if $a, b \in \mathcal G(h)$,
\begin{equation}\label{eq:prodrule}
\pat_ a   \pat_b = \sum_c \binom{c}{a, b} \pat_c \, ,
\end{equation}
where the coefficients $\binom{c}{a, b}$ are the number of ``quasi-shuffles'' of $a, b$ that result in $c$, specifically, if we take $c\in h[C]$ to be a representative of the equivalence class $c$, then:
$$ \binom{c}{a, b} = \left| \{(I, J) \, \text{ such that } \, \,  I \cup J = C \, ,\, \, c|_{I} \sim a, \, c|_{J} \sim b \} \right| \, .  $$
\end{thm} 

Quasi-shuffles of objects have been studied in several contexts as a notion of merging objects together.
For details on quasi-shuffles of combinatorial objects, the interested reader can see \cite{hoffman00,aguiar10,foissy16}.

\bigskip

We introduce now the \textit{Cauchy product} $\odot $ on the category of combinatorial presheaves, which associates to two combinatorial presheaves $h$ and $ j$ the combinatorial presheaf 
$$h \odot j : I \mapsto \biguplus_{A\uplus B = I} h[A] \times j[B] \, . $$
The unit for this product is the unique presheaf that satisfies $\mathcal{E}[A] = \emptyset$ for $A \neq \emptyset$, and $\mathcal{E}[\emptyset] = \{ \diamond \} $.
If $f:h_1 \Rightarrow h_2$ and $ g: j_1 \Rightarrow j_2 $ are natural transformations of presheaves, then $f\odot g$ is a natural transformation such that $(f\odot g)_I$ is the natural mapping from $h_1[A]\times j_1[B]$ to $h_2[A] \times j_2[B]$, for each decomposition $A\uplus B = I$.

Product structures on categories were examined in \cite{maclane63}.
There, it was shown that the Cauchy product gives the category of presheaves a monoidal structure. We call an object in this monoidal category an \emph{associative presheaf}.

This is a triple $(h, \ast, 1)$, where $h$ is a combinatorial presheaf, $\ast $ is a natural transformation $h\odot h \Rightarrow h$, and $1 \in h[\emptyset ] $ a unit that satisfy classical axioms of associativity and unit. 
We explain the details further in \cref{sec:prel} below.

Examples of associative operations on combinatorial presheaves are the disjoint union of graphs and the direct sum of permutations.
Another less standard example, which we study in this paper, is the inflation of marked permutations, defined below.

Observe that the associative product $\ast $ in our combinatorial objects is a natural transformation.
This means that the product is stable with respect to relabelings and restrictions, so we can also define the corresponding product on $\mathcal{G}(h)$, which we denote by $\cdot $ for the sake of distinction; see \cref{defin:prodonG}.
With this, we introduce the following coproduct in the pattern algebra $\mathcal A (h)$:
\begin{equation}\label{eq:coprodformula}
\Delta \pat_ a = \sum_{\substack{ b, c\in \mathcal G (h) \\ a = b \cdot c}} \pat_b \otimes \pat_c \, .
\end{equation}
where the sum runs over coinvariants $b, c$ such that $a = b \cdot c$.
The main property that motivates this operation in $\mathcal{A}(h)$ is that, under the natural identification of the function algebra $\mathcal{F}(\mathcal G (h), \mathbb Q)^{\otimes 2} $ as a subspace of $\mathcal{F}(\mathcal G (h) \times \mathcal G (h), \mathbb Q) $, we have
\begin{equation}\label{eq:coproddefin}
 \Delta \pat_a (b,  c) =  \pat_a (b \cdot c) \, .
\end{equation}
This is shown in \cref{thm:conHopfalgebra}.
The relation \eqref{eq:coproddefin} is central in establishing that the coproduct $\Delta $ is compatible with the product in $\mathcal A (h)$.

\begin{thm}\label{thm:conHopfalgebra}
Let $(h, \ast, 1) $ be an associative presheaf.
Then the pattern algebra of $h$ together with this coproduct, and a natural choice of counit, forms a bialgebra.
If additionally $| h[\emptyset ] | = 1 $, the pattern algebra forms a Hopf algebra.
\end{thm}

Presheaves that satisfy $|h[\emptyset ]| = 1$ are called \textit{connected}.
Connected algebraic structures are a classical resource in graded Hopf algebras, as in this way we can find an antipode through the so called \textit{Takeuchi formula}, introduced in \cite{takeuchi71}.

Some known Hopf algebras can be constructed as the pattern algebra of a combinatorial presheaf.
An example is $Sym$, the Hopf algebra of \textit{symmetric functions}.
This Hopf algebra has a basis indexed by partitions, and corresponds to the pattern Hopf algebra of the presheaf on set partitions (see details in \cref{sec:spartpatalg}).
The pattern Hopf algebra corresponding to the presheaf on permutations described above was introduced by Vargas in \cite{vargas14}.
Some other Hopf algebras constructed here, like the ones on graphs and on marked permutations below, are new, and some we conjecture are isomorphic to known Hopf algebras like the pattern Hopf algebra on set compositions, which may be simply the Hopf algebra of quasi-symmetric functions; see \cref{conj:QSym} below.

We also establish some general properties of pattern Hopf algebras, like describing their primitive elements, finding the inverse of the so-called \textit{pattern action} in $\mathcal{A}(h)$, and relating $\mathcal A (h)$ with the Sweedler dual of an algebra generated by $\ast $.

\begin{rem}\label{rem:aguiar}
In the terminology of \cite{aguiar10}, a cocommutative comonoid in set species is precisely what we call here a combinatorial presheaf.
This is done by identifying $\Delta_{A, B}(a) = a|_A \times a|_B$ with the restrictions of $a$ to the sets $A$ and $B$.
Furthermore, an associative presheaf is a cocomutative bimonoid in set species.
The coalgebra structure of the pattern Hopf algebras that we construct here is a subcoalgebra of the dual algebra of the so called \textit{bosonic Fock functor} of these comonoids in linearized set species.
This is the subject of \cite[Proposition 8.29]{aguiar10}.
However, the algebra structure is in general different.

Specifically, on the combinatorial presheaf on graphs introduced below, the corresponding coalgebra structure is the dual of the well known incidence Hopf algebra introduced in \cite{schmitt94}.
\end{rem}

\subsection{Commutative presheaves\label{sec:introcomuandstrat}}

In this paper, we focus on the problem of proving the freeness of some pattern algebras.

\begin{defin}[Free generators of an algebra]
If $A$ is a commutative algebra, a set $\mathcal G \subset A$ is a generating set if the smallest subalgebra $A$ containing $\mathcal G$, written $\langle \mathcal G \rangle$, is $A$.

A set $\mathcal G \subset A$ is a (commutative) free set if for any finite collection $y_1, \dots , y_k$ of elements in $\mathcal G$ and any non-zero polynomial in $k$ commutative variables $p = p(x_1, \dots , x_k)$, the evaluation $p(y_1, \dots , y_k)$ is a non-zero element in $A$.

If an algebra $A$ has a free generator set $\mathcal G$, the algebra is \textit{free}.
\end{defin}

The first case that we want to explore is the one of commutative presheaves.
An associative presheaf $(h, \ast, 1)$ is called \textit{commutative} if $\ast $ is commutative, that is for any $a\in h[I], b\in h[J]$ we have that $a \ast b = b \ast a$; see \cref{def:asspresheaf}.

As it turns out, this is enough to guarantee the freeness of the pattern Hopf algebra. 
This is one of the main results of this paper.

\begin{thm}[Commutative combinatorial presheaves are free]\label{thm:comutative}
The pattern Hopf algebra of an associative combinatorial presheaf with a commutative product $\ast $ is free.
The free generators are the pattern functions indexed by the irreducible objects with respect to $\ast $.
\end{thm}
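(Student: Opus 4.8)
The plan is to prove freeness by producing an explicit basis of $\mathcal{A}(h)$ consisting of monomials in the proposed generators $\{\pat_r : r \text{ irreducible}\}$, via a triangularity (leading-term) argument against the linear basis $\{\pat_a\}_{a\in\mathcal G(h)}$. First I would record two preliminaries. The pattern functions $\{\pat_a\}_{a\in\mathcal G(h)}$ are linearly independent: ordering by size and noting that $\pat_a(a)=1$ while $\pat_a(b)=0$ whenever $|b|<|a|$, or $|b|=|a|$ and $b\not\sim a$, makes the evaluation matrix unitriangular. Second, the product rule \eqref{eq:prodrule} is compatible with the size filtration $F_{\le n}\mathcal A(h)=\spn\{\pat_a : |a|\le n\}$, since every $c$ occurring in $\pat_a\,\pat_b$ satisfies $|c|\le |a|+|b|$, with equality exactly for the ``non-overlapping'' quasi-shuffles.

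Next I would convert the commutativity of $\ast$ into unique factorization of objects. Using that $\ast$ is a morphism of presheaves, restriction recovers the factors: if $a=b\cdot c$ is realised on $B\uplus C=I$ then $\rest_B(a)=b$ and $\rest_C(a)=c$. I would exploit this rigidity to show that $(\mathcal G(h),\cdot,1)$ is the free commutative monoid on the $\cdot$-irreducibles; commutativity is precisely what makes the factorization data an unordered multiset rather than a word. Concretely, every object of positive size factors into irreducibles (size is additive and strictly decreases on proper factors), and I would show the resulting multiset is independent of the chosen factorization. This yields a bijection $M\mapsto c_M:=\prod_{r\in M} r$ from multisets of irreducibles onto $\mathcal G(h)$.

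The technical heart is a leading-term lemma. I would fix a total order $\prec$ on $\mathcal G(h)$ refining size, chosen so that among the finitely many objects occurring in $\pat_a\,\pat_b$ the canonical product $a\cdot b$ is $\prec$-maximal, and establish
\begin{equation}
\pat_a\,\pat_b \;=\; \alpha_{a,b}\,\pat_{a\cdot b}\;+\;\sum_{c\prec a\cdot b}\lambda_c\,\pat_c,\qquad \alpha_{a,b}=\textstyle\binom{a\cdot b}{a,b}\neq 0 .
\end{equation}
Iterating over a factorization $c_M=\prod_{r\in M} r$ then gives $\prod_{r\in M}\pat_r=\alpha_M\,\pat_{c_M}+\sum_{c\prec c_M}(\cdots)\pat_c$ with $\alpha_M\neq 0$. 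Combined with the bijection $M\mapsto c_M$, the transition matrix from the monomials $\{\prod_{r\in M}\pat_r\}$ to the basis $\{\pat_a\}$ is triangular with nonzero diagonal, so these monomials themselves form a basis of $\mathcal A(h)$. Hence the $\pat_r$ with $r$ irreducible are algebraically independent and generate, that is, $\mathcal A(h)$ is free on them, which is the assertion.

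I expect the leading-term lemma to be the main obstacle: identifying $a\cdot b$ as the $\prec$-maximal term and, above all, showing its coefficient $\binom{a\cdot b}{a,b}$ is nonzero will require a careful analysis of the quasi-shuffle coefficients of \eqref{eq:covernmr}, together with the construction of a single order $\prec$ that behaves coherently under iteration, so that the leading terms of nested products remain under control. Establishing unique factorization in full generality is the second delicate point, since a graded commutative monoid need not have unique factorization a priori; here one must genuinely use the presheaf axioms and the restriction-recovers-factors identity. As an alternative route to freeness alone, though not to the explicit generators, one could pass to the associated graded $\mathrm{gr}\,\mathcal A(h)$, which is a connected graded commutative Hopf algebra over $\mathbb Q$ and hence free by Leray's theorem; identifying the generators with $\{\pat_r\}$ would still require the combinatorial analysis above, the irreducible pattern functions being exactly the primitives arising from indecomposable objects.
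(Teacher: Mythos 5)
Your plan coincides with the paper's own proof (\cref{thm:comutUFTfree}): first unique factorization into irreducibles, obtained exactly from the restriction-recovers-factors rigidity you describe (\cref{thm:comuUFT} and \cref{cor:UFTcommut}), then a triangularity argument expressing $\prod_{\iota\in M}\pat_{\iota}$ as a nonzero multiple of $\pat_{c_M}$ plus lower-order terms against the basis $\{\pat_a\}$. The order you flag as the main obstacle is precisely the paper's $<_p$, which compares first the size and then the number $j(\cdot)$ of irreducible factors; with it the paper handles the full $k$-fold product at once by showing that any $\leq_p$-maximal quasi-shuffle of the irreducible factors is isomorphic to their product (using commutativity to reorder the factors), which resolves both of your delicate points.
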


The proof of this result is presented in \cref{sec:comutfree}.
The main ingredient for this result is \cref{cor:factsthm}, a surprising structure result on associative presheaves, which shows that the structure of an associative presheaf can be described much in the same way as we can describe a group: by providing a set of generators and a set of relations that these generators must satisfy.

In the case of the graph pattern Hopf algebra, \cref{thm:comutative} was already proved in \cite[Theorem 3]{whitney1932coloring}, where a function on graphs that satisfies \eqref{eq:prodrule} is shown to be completely determined by its values on the connected graphs, that the remaining values are obtained via polynomial expressions, and that no other polynomial expressions hold for such a generic function.

\subsection{Non-commutative presheaves}

As mentioned above, the pattern Hopf algebra on permutations is the one discussed by Vargas in \cite{vargas14}, where it is shown that it is free.
This is an example of a non-commutative associative presheaf.
There, free generators were constructed.
These generators correspond to Lyndon words of $\oplus$-indecomposable permutations, see \cite{chen58} for an introduction to combinatorics of Lyndon words.


In this paper we explore other associative presheaves that are non-commutative.
Taking the presheaf on permutations as our starting point, we wish to study monoidal structures that are more complex than the $\oplus $ product (see \cref{smpl:assperm}), but still allows us to establish the freeness property.
We investigate the presheaf on marked permutations $\mathtt{MPat}$, which is equipped with the inflation product.
This product is motivated by the inflation procedure on permutations described in \cite{albert03}.
Its presheaf structure is presented below in \cref{smpl:mpermutations}.
This is not a commutative presheaf, so \cref{thm:comutative} does not apply.
However, we still have the following: 

\begin{thm}[Freeness of $\mathcal{A}(\mathtt{MPat})$]\label{thm:Hopffunctor}
The pattern algebra $\mathcal{A}(\mathtt{MPat})$ is free.
\end{thm}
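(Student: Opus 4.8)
The plan is to mirror the strategy laid out in \cref{sec:strat}, and already used by Vargas for the $\oplus$-product on ordinary permutations, adapting it to the nested inflation product $\star$. The first and most substantial step is to establish the promised factorization theorems for $(\mathcal{G}(\mathtt{MPer}), \star, \bar 1)$. I would call a marked permutation $\star$-\emph{irreducible} if it is not $\bar 1$ and cannot be written as $\alpha \star \beta$ with $\alpha, \beta \neq \bar 1$, and prove that every marked permutation admits a factorization $\sigma^* = \iota_1 \star \cdots \star \iota_k$ into irreducibles. Using the two-sided cancellation of \cref{rem:integerdomain} together with the general relation-structure result \cref{cor:factsthm}, which forbids all relations among irreducibles except reorderings of a common multiset of factors, I would pin down exactly which reorderings are genuinely forced by $\star$. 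Since $\star$ is a substitution (nesting) operation rather than a parallel juxtaposition, I expect these forced commutations to be very restricted, so that the monoid $(\mathcal{G}(\mathtt{MPer}), \star)$ is free (or free on a partially commutative alphabet). This identifies marked permutations with words over the alphabet $\mathcal{I}$ of irreducibles.

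With this identification in hand, I would introduce a total order $\preceq$ on marked permutations that refines the size $|\sigma^*|$ and, within each size, is tailored to the associated factorization word. The core computation is a leading-term analysis of the pattern product \eqref{eq:prodrule}: I would show that the $\preceq$-maximal object $c$ occurring in $\pat_a \pat_b$ is the concatenation of the words of $a$ and $b$, arranged in the order dictated by $\preceq$, and that it appears with leading coefficient $1$. Every other contribution must be shown to be strictly $\preceq$-smaller; this splits into the \emph{overlap} terms, where the two placements $S_a, S_b \subseteq \operatorname{ground}(c)$ intersect and the size of $c$ drops, and the disjoint placements $S_a \sqcup S_b = \operatorname{ground}(c)$ that land in an irreducible or otherwise differently factored $c$. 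The key lemma is that restricting $c$ to $S \cup \{*\}$ respects the nesting, so that a sub-pattern decomposes levelwise along the factorization; this is the analog, for the nested product $\star$, of the block decomposition available for $\oplus$.

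Finally, I would invoke the Lyndon-word machinery. By the Chen--Fox--Lyndon theorem every word over $\mathcal{I}$ factors uniquely as a nonincreasing concatenation of Lyndon words, and the (quasi-)shuffle structure on words is upper-triangular with respect to the lexicographic order after the standard-monomial normalization. Combining this with the leading-term analysis above, the decreasing monomials $\pat_{L_1} \cdots \pat_{L_m}$ indexed by Lyndon words $L_1 \succeq \cdots \succeq L_m$ over $\mathcal{I}$ have pairwise distinct leading terms that biject with all of $\mathcal{G}(\mathtt{MPer})$. This triangularity simultaneously yields their algebraic independence and the fact that they span $\mathcal{A}(\mathtt{MPer})$, establishing that $\mathcal{A}(\mathtt{MPat})$ is free polynomial on the pattern functions of Lyndon words of irreducible marked permutations.

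The main obstacle I anticipate is the factorization step together with the precise choice of $\preceq$ and the identification of the leading term. Because inflation nests one diagram inside the marked cell of another, rather than placing diagrams side by side as $\oplus$ does, both the uniqueness of the irreducible factorization and the claim that sub-patterns decompose cleanly along the nesting require genuinely new arguments; in particular one must rule out, or carefully account for, irreducibles that commute under $\star$, and one must design $\preceq$ so that overlapping placements and irreducible top-size objects are provably $\preceq$-smaller than the concatenation. Once these structural facts are secured, the passage to Lyndon words and the conclusion of freeness is essentially formal.
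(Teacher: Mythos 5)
Your outline follows the same strategy as the paper (factorization into irreducibles, a size-refining order, a leading-term/triangularity argument, and Lyndon words), but it leaves the decisive structural step unresolved, and the conclusion you draw from it is false as stated. The monoid $(\mathcal{G}(\mathtt{MPer}),\star)$ is \emph{not} free: there are genuine commutation relations among irreducibles, namely the $\oplus$-relations $(\bar 1\oplus\tau_1)\star(\tau_2\oplus\bar 1)=(\tau_2\oplus\bar 1)\star(\bar 1\oplus\tau_1)$ and their $\ominus$-analogues (\cref{rem:oplusominusrelations}), which arise exactly when nesting degenerates into juxtaposition. The paper's \cref{thm:isomonoids} shows these generate \emph{all} relations, and its proof (via doubly connected intervals, \cref{sec:UFTProof}) is the technical heart of the freeness result; you defer this to ``I would pin down exactly which reorderings are genuinely forced,'' which is precisely the work that has to be done. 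Note that \cref{cor:factsthm} and the cancellation of \cref{rem:integerdomain} alone do not suffice: \cref{cor:factsthm} only says the multiset of irreducible factors is well defined, not which permutations of it yield valid factorizations.

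Because the monoid is only partially commutative, your final step breaks: distinct nonincreasing sequences of Lyndon words over $\mathcal{I}$ can have the same product in $\mathcal{G}(\mathtt{MPer})$, so their leading terms do not biject with $\mathcal{G}(\mathtt{MPer})$ and the corresponding monomials are not linearly independent. One must first choose a canonical (\emph{stable}) representative in each commutation class (\cref{cor:simpleUFT}), run the Chen--Fox--Lyndon factorization inside the stable words (\cref{thm:Lyndonmperfact}), and take as generators only the \emph{stable} Lyndon marked permutations $\mathcal{L}_{SL}$ of \cref{defin:centralLyndon} --- not all Lyndon words over $\mathcal{I}$. Moreover, showing that arbitrary quasi-shuffles are dominated by the concatenation requires the order on irreducibles to interact correctly with the stabilization moves (this is where \cref{lm:bbl,lm:bplol} do real work, and where the specific choice of $\leq_{per}$ matters, unlike in Vargas's $\oplus$ case). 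You correctly flag the commuting irreducibles as the main obstacle, but since resolving it changes both the indexing set of the generators and the triangularity argument, the proposal as written has a genuine gap rather than being a complete proof.
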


%

To establish the freeness of $\mathcal{A}(\mathtt{MPat})$ we present a unique factorization theorem on marked permutations with the inflation product in \cref{cor:simpleUFT}.
This is an analogue of \cite[Theorem 1]{albert03}.
With it, we find generators of the algebra on marked permutations, and use the Lyndon factorization of words in \cite{chen58} to show that these generators are free generators.
%

In this way, the presheaf on marked permutations will be one main focus of this paper: \cref{sec:mperfree} will be dedicated to the freeness problem on this presheaf, as well as enumerating the dimension of the primitive space of the pattern Hopf algebra, which corresponds to enumerating the marked permutations that are irreducible with respect to the inflation product.

\section{Notation and preliminaries\label{sec:prel}}

\subsection{Species and monoidal functors}

If $h$ is a combinatorial presheaf, and if $a \in h[I]$, we define the \textit{size} of $a$ as $|a| := | I | $ and the \textit{indexing set} of $a$ as $\mathbb{X}(a) = I$.


%

\begin{smpl}[The unit presheaf]
The species $\mathcal{E} $ with only one object $\diamond $, which has size zero, has a unique presheaf structure.
%
\end{smpl}

\begin{smpl}[The presheaf on permutations]\label{smpl:permpresheaf}

To fit the framework of presheaves, we use a rather unusual definition of permutations introduced in \cite{albert18}.
There, a permutation on a set $I$ is seen as a pair of orders in $I$.
This relates to the usual notion of a permutation as bijections in the following way: if we order the elements of $I = \{a_1 \leq_P \dots \leq_P a_k \} = \{ b_1 \leq_V \dots \leq_V b_k \}$, then this defines a bijection via $a_i \mapsto b_i $ in $I$.
Conversely, for any bijection $f$ on $I$, there are several pairs of orders $(\leq_P, \leq_V)$ that correspond to the bijection $f$, all of which are isomorphic.
If $I \hookrightarrow J$, by restricting the orders on $I$ to orders on $J$ via the injective map $\hookrightarrow $, we obtain a restriction to a permutation on $J$.
The resulting presheaf structure is denoted by $\mathtt{Per}$.

It will be useful to represent permutations in $I$ as square diagrams labeled by $I$.
This is done in the following way: we place the elements of $I$ in an $|I| \times |I|$ grid so that the elements are placed horizontally according to the $\leq_P$ order, and vertically according to the $\leq_V$ order.
For instance, the permutation $\pi = \{1<_P2<_P3 , 2<_V1<_V3\}$ in $\{1, 2, 3\}$ can be represented as 
\begin{equation}
\begin{array}{|c|c|c|}
	\hline & & 3 \\
    \hline 1 & &  \\
    \hline & 2 & \\
    \hline 
\end{array}\, \, \, .
\end{equation}

In this way, there are $(n!)^2 $ elements in $\mathtt{Per}[n]$.
Up to relabelling, we can represent a permutation as a diagram with one dot in each column and row.
Thus, $\mathcal{G}(\mathtt{Per})$ has $n!$ isomorphism classes of permutations of size $n$, as expected.

\label{defin:per}
If we consider a permutation $\pi$ on a set $I$, that is, a pair $(\leq_P, \leq_V) $ of total orders in $I$, we write $\mathbb{X}(\pi) = I$.
If $f:J \to I $ is an injective map, the preimage of each order $\leq_P, \leq_V$ is well defined and is also a total order in $J$.
This defines the permutation $\mathtt{Per}[f](\pi )$.

A crucial observation is that this notion recovers the usual concept of permutation pattern already present in the literature.
Specifically, the number of occurrences of a permutation $\tau $ in $\pi$ as described in \cite{wilf02} is precisely $\pat_{\tau }(\pi)$, and a permutation $\pi$ \textit{avoids} $\tau$ in the sense describe in \cite{knuth11} if $\pat_{\tau }(\pi)=0$.
\end{smpl}

Recall that we are given a bifunctor $\odot $ that endows the category of combinatorial presheaves with a monoidal category structure, as introduced in \cite{aguiar10}.
%

\begin{defin}[Associative presheaf]\label{def:asspresheaf}
An \textit{associative presheaf} is a monoid in $\mathtt{CPSh}$, that is, is a combinatorial presheaf $h$ together with natural transformations $\eta: h \odot h \Rightarrow  h$ and $ \iota: \mathcal{E} \Rightarrow h$ that satisfy associativity and unit conditions.
We use, for $a\in h[I]$ and $b\in h[J]$, the notation $\eta_{I, J}(a, b) = a\ast b$.
We also denote the unit by $1 := \iota[\emptyset ](\diamond )\in h [ \emptyset ] $.

This is \textit{commutative} if, for any $a\in h[I], b\in h[J]$ with $I\cap J =\emptyset $, we have $a\ast b = b \ast a$.
\end{defin}

Thus, a product on a presheaf simply describes how to \textit{merge} objects of a certain type $h$ that are based on disjoint sets.

\begin{obs}[Naturality axioms in associative presheaves]\label{obs:naturality}
The naturality of $\eta $, the associativity and unit conditions correspond to the following properties:
\begin{itemize}

\item For all $I, J$ disjoint sets, all $a\in h[I], b\in h[J]$ and all $A\subseteq I, B\subseteq J$, we have
$(a\ast b)|_{A\sqcup B} = a|_A \ast b|_B$.

\item For all $I, J, K$ disjoint sets and all $a\in h[I], b\in h[J], c\in h[K]$, we have $(a \ast b) \ast c = a \ast (b\ast c)$.

\item For any set $I$, and $a\in h[I]$, we have $a\ast 1 = 1 \ast a = a$.
\end{itemize}
\end{obs}

\begin{rem}[Monoidal product and quasi-shuffle]\label{rem:inflqshuf}
In an associative presheaf $(h, \ast, 1)$, let $a\in h[I], b \in h[J]$ with $I, J$ disjoint sets.
Then it is not always the case that $a|_{\emptyset } = 1$, that $(a\ast b)|_I = a$ or that $
(a\ast  b	) |_J = b$.

This is the case, however, when $h$ is a connected presheaf.
It follows that in an associative connected presheaf $h$, we have that $\binom{a \ast b}{a, b} \geq 1$.
\end{rem}

\begin{defin}
Let $(h, \ast_h, 1_h)$ and $(j, \ast_j, 1_j)$ be associative presheaves, and let $f$ be a \textit{presheaf morphism} between $(h, \ast_h, 1_h)$ and $(j, \ast_j, 1_j)$.
This is an \textit{associative presheaf morphism} if it maps unit to unit, and the associative product of the associative presheaves.
\end{defin}

That is, $f: h \Rightarrow j$ is an associative presheaf morphism if it is a presheaf morphism that satisfies $f(1_h)=1_j$ and $f(b' \ast_h  c') = f(b') \ast_j f( c')$ for any $b'\in h[I], c' \in h[J]$.

\subsection{Preliminaries on permutations, graphs and marked permutations}

\begin{smpl}[Presheaf on graphs]\label{smpl:grphs}
There are many notions of patterns on graphs: minors, subgraphs and induced subgraphs are among some of those.
The one that forms a presheaf is the one of induced subgraphs.

This forms a combinatorial presheaf $\mathtt{Gr}$ that can be endowed with the associative product of disjoint union of graphs $\uplus$.
This is in fact a commutative product, so $\mathcal A ( \mathtt{Gr} )$ is a free algebra, and the pattern functions of connected graphs are the free generators, that is:
$$\mathcal{A}(\mathtt{Gr}) = k[ \pat_G | \, G \text{ connected graph} ] \, .$$
\end{smpl}

\begin{smpl}[Permutations and their pattern Hopf algebra]\label{smpl:assperm}
To the presheaf $\mathtt{Per}$ there corresponds a pattern algebra $\mathcal{A}(\mathtt{Per})$ as discussed above.
We can further consider $\mathtt{Per} $ with a monoid structure via the direct sum of permutations $\oplus $, defined as follows:
Suppose that $\pi\in \mathtt{Per}[I], \tau \in \mathtt{Per}[J]$ are two permutations based on the disjoint sets $I, J$, respectively.
The permutation $\pi \oplus \tau \in \mathtt{Per}[I\sqcup J]$ is the pair of total orders $(\leq_P^{\oplus }, \leq_V^{\oplus }) $ extending both of the respective orders from $\pi, \tau $ to $I\sqcup J$ by forcing that $i\leq_P^{\oplus } j$ and $i\leq_V^{\oplus } j$ for any $i\in I, j\in J$.
Correspondingly, the diagram of $\pi \oplus \tau $ results from the ones from $\pi $, $\tau $ as follows
\begin{equation*}
\pi \oplus \tau = \begin{array}{|c|c|}
	\hline & \tau\\
    \hline \pi &  \\
    \hline
\end{array}\, \, \, .
\end{equation*}

We note that this is not a commutative presheaf: in general, $\pi \oplus \tau $ is a different permutation than $\tau\oplus \pi $.
\end{smpl}


We can write a permutation in its two-line notation, as $ \substack{a_1, \dots , a_k \\ b_1 , \dots , b_k }$ where $a_1 \leq_V a_2 \leq_V \dots $ and $b_1 \leq_P b_2 \leq_P \dots \leq_P b_k$.
If we identify $b_1 , \dots , b_k $ with $1, \dots , k$, respectively, we can disregard the bottom line.
This also disregards the indexing set $I$, and in fact any two isomorphic permutations have the same representation with the one line notation.

We call the unique permutation in the empty set the \textit{trivial permutation} and denote it $\emptyset$.

\begin{defin}[The $\ominus$ operation]
Given two permutations, $\pi, \sigma$, we have already introduced the product $\pi \oplus \sigma$.
We now define the permutation $\pi \ominus \tau \in \mathtt{Per}[I\sqcup J]$ as the pair of total orders $(\leq_P^{\ominus }, \leq_V^{\ominus }) $ extending the respective ones from $\pi, \tau $ to $I\sqcup J$ by forcing that $i\leq_P^{\ominus } j$ and $i\geq_V^{\ominus } j$ for any $i\in I, j\in J$.
Correspondingly, the diagram of $\pi \ominus \tau $ results from the ones from $\pi $, $\tau $ as
\begin{equation*}
\pi \ominus \tau = \begin{array}{|c|c|}
	\hline \pi & \\
    \hline &  \tau \\
    \hline
\end{array}\, \, \, .
\end{equation*}
\end{defin}

It is a routine observation to check that both $\oplus, \ominus $ are associative products on $\mathtt{Per}$, and that $\emptyset $ is the unit of both operations, by simply checking that all properties in \cref{obs:naturality} are fulfilled.

\begin{smpl}[Marked permutations and their pattern Hopf algebra]\label{smpl:mpermutations}
A marked permutation $\pi^*$ on $I$ is a pair of orders $(\leq_P, \leq_V)$ on the set $I \sqcup \{ * \}$.
Intuitively, this gives us a rearrangement of the elements of $I \sqcup \{ * \}$, where one element is special and marked.
The relabelings and restriction maps are the natural ones borrowed from orders, giving us a combinatorial presheaf, that we call $\mathtt{MPer}$.
We can represent a marked permutation in a diagram, as we do for permutations.
Note that in this case the marked element $*$ never changes position after relabelings.
Take for instance the marked permutations $\pi^* = (1<_P2<_P *, 2<_V1<_V* )$, $\tau^* = (*<_P1<_P2, 1<_V*<_V2)$, and $\sigma^* = (*<_P2<_P1, 2<_V*<_V1)$.
Observe that there is no isomorphism between $\pi^* $ and $\tau^* $, whereas there is one between $\tau^* $ and $\sigma^* $, via the relabeling $1 \mapsto 2, 2 \mapsto 1$.

\begin{equation*}
\pi^* = \begin{array}{|c|c|c|}
	\hline & & * \\
    \hline 1 & & \\
    \hline & 2 & \\
    \hline
\end{array}\, \, \, \, \, \, \, \, \, \, 
\tau^* = \begin{array}{|c|c|c|}
	\hline & & 2 \\
    \hline * & & \\
    \hline & 1 & \\
    \hline
\end{array}\, \, \, \, \, \, \, \, \, \, 
\sigma^* = \begin{array}{|c|c|c|}
	\hline & & 1 \\
    \hline * & & \\
    \hline & 2 & \\
    \hline
\end{array}\, \, 
\, .
\end{equation*}

In this way, there are $((n+1)!)^2 $ elements in $\mathtt{MPer}[n]$.
Up to relabeling, we can represent a marked permutation as a diagram with one dot in each column and row, where a particular dot is the distinguished element $*$.
Therefore, $\mathcal{G}(\mathtt{MPer})$ has $(n+1) \times (n+1)! $ isomorphism classes of marked permutations of size $n$.
\end{smpl}

\begin{defin}[Marked permutations]\label{defin:mper}
If we consider a marked permutation $\pi^*$ on a set $I$, that is, a pair $(\leq_P, \leq_V) $ of total orders in $I^*$, we write $\mathbb{X}(\pi^*) = I$.
If $f:J \to I $ is an injective map, this can be extended canonically to an injective map $f^* : J^* \to I^*$.
Thus, the preimage of each order $\leq_P, \leq_V$ under $f^* $ is well defined and is also a total order in $J^*$.
This defines the marked permutation $\mathtt{MPer}[f](\pi^* )$.

Note that a relabeling of the permutation $\pi^*$ in $I^*$ is a relabeling of the corresponding marked permutation in $I$ if the relabeling preserves the marked elements.
\end{defin}

We can also write marked permutations in a one line notation, where we add a marker over the position of $*$.
The resulting notation only disregards the indexing set $I$, and so any two isomorphic marked permutations have the same one line notation.
Note that for each permutation of size $n$ it corresponds $n$ different non-isomorphic marked permutations of size $n-1$, one for each possible marked position.

\begin{smpl}
If we consider $(1 <_P 2 <_P * <_P 4,\, \, 1 <_V *  <_V 4 <_V 2)$, a marked permutation on $\{ 1, 2, 4\}$, its representation with the one line notation is $14\bar{2}3$.

The marked permutation $\tau^* = (73 <_P x <_P * <_P 47, \, \, 73 <_V * <_V x <_V 47 )$ is based on the set $I= \{x, 47, 73 \}$ and has a one line representation $13\bar{2}4$.
Consider now $\pi^* = (1 <_P * <_P 2, 1 <_V * <_V 2) $ and $\sigma^* = (1 <_P * <_P 2, * <_V 1<_V 2)$ marked permutations in $\{1, 2 \}$
So the marked permutations $\tau^*, \pi^*, \sigma^*$ correspond to the one line notations below
\begin{equation}\label{eq:infsmpl}
\tau^* =  13\bar{2}4 = 
\begin{array}{|c|c|c|c|}
	\hline
    & & & \cdot \\
    \hline & \cdot & & \\
    \hline & & \odot & \\
    \hline \cdot & & & \\
    \hline 
\end{array} \, , \, \,\pi^* = 1\bar{2}3 = 
\begin{array}{|c|c|c|}
	\hline & & \cdot \\
    \hline & \odot & \\
    \hline \cdot & & \\
    \hline 
\end{array}\, ,  \, \, \sigma^* = 2\bar{1}3 = 
\begin{array}{|c|c|c|}
	\hline & & \cdot \\
    \hline \cdot & & \\
    \hline & \odot & \\
    \hline 
\end{array}\, .
\end{equation}

Then, we have that $\pi^*, \sigma^*$ are patterns of $ \tau^* $, because $J = \{73, 47 \}$ is an occurrence of $\pi^* $ in $\tau^* $, and $J' = \{x, 47\}$ is an occurrence of $\sigma^* $ in $\tau^*$.
\end{smpl}

\begin{defin}[Inflation product]
The inflation product $\star $ in marked permutations is defined as follows:
Given two marked permutations $\tau^* \in \mathtt{MPer}[I]$ and $\pi^* \in \mathtt{MPer}[J]$ with $I, J$ disjoint sets, the inflation product $\tau^* \star \pi^* \in \mathtt{MPer}[I\sqcup J]$ is a marked permutation resulting from replacing in the diagram of $\tau^* $ the marked element with the diagram of $\pi^* $.
Here is an example:
\begin{equation*}
\includegraphics[valign=c, scale=0.65]{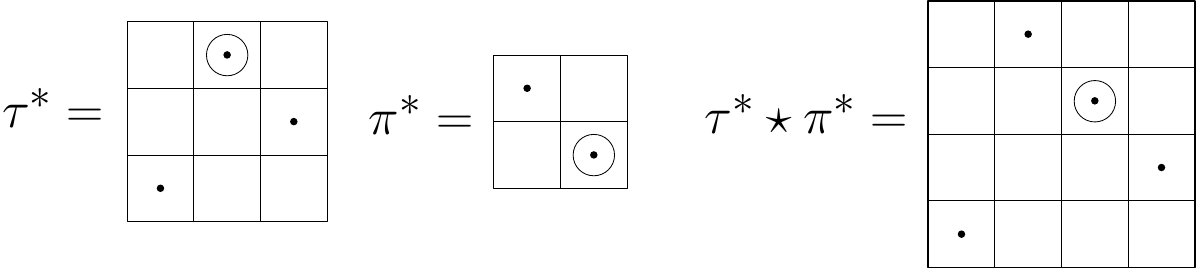} \, .
\end{equation*}
\end{defin}

\begin{rem}\label{rem:integerdomain}
Note that if $\pi^* \star \tau^* = \pi^* \star \sigma^*$, then $\tau^* = \sigma^* $.
Similarly, if $\tau^* \star \pi = \sigma^* \star \pi^*$, then $\tau^* = \sigma^* $.
\end{rem}

It is straightforward to observe that this is an associative presheaf with $\mathtt{MPer}[\emptyset] $, where the unit is the unique marked permutation on $\emptyset $, denoted by $\bar{1}$.
We call it the \textit{presheaf of marked permutations}.
Hence, from \cref{thm:Hopffunctor}, the algebra $\mathcal{A}(\mathtt{MPat})$ is indeed a Hopf algebra.

\subsection{Strategy for establishing the freeness of a pattern algebra\label{sec:strat}}

We now discuss the general strategy that we employ when establishing the freeness of a pattern Hopf algebra.
In particular, we clarify what is the relation between unique factorization theorems and freeness of the algebra of interest.
Let $\mathcal{S}\subseteq \mathcal{G}(h)$ be a collection of objects in a presheaf $h$.
Then the set $ \{\pat_s |  s\in \mathcal{S} \} $ is a set of free generators of $\mathcal{A}(h)$ if the set
$$ \left\{\prod_{s\in S} \pat_s \Big|  S \text{ multiset of elements of } \mathcal{S} \right\}\, , $$
is a basis of $\mathcal{A}(h)$.
This is usually established by connecting this set with the set $\left\{p_a | a \in \mathcal{G}(h)\right\}$, which is known to be a basis by \cref{rem:lipatfunc}.
This connection is done with the following ingredients:

\begin{itemize}
\item An order $\preceq$ in $\mathcal{G}(h) $.

\item A bijection $f$ between $\{ S \text{ multiset of elements of } \mathcal{S} \}$ and $ \mathcal{G}(h) $, which is usually phrased in terms of a \textit{unique factorization theorem}.
See for instance \cref{thm:Lyndonmperfact}.

\item The property that, for any $S$ multiset of $\mathcal{S}$,
\begin{equation}\label{eq:product}
 \prod_{s\in S} \pat_s = \sum_{t \preceq f(S)} c_{t, S} \pat_t \, ,
\end{equation}
with non-negative coefficients $c_{t, s}$ such that $c_{f(S), S}\neq 0$.
\end{itemize}

These are enough to establish the desired freeness.
In the commutative presheaf case, the unique factorization theorem is the one that we naively would expect; see \cref{thm:comuUFT}.
For the case of presheaf on permutations and the presheaf on marked permutations, the construction of the set $\mathcal{S}$ is more technical.

\begin{conj}\label{conj:freeness}
The pattern Hopf algebra of any associative presheaf is free.
\end{conj}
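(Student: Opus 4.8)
The plan is to push the strategy of \cref{sec:strat} to its logical conclusion, treating the commutative case (\cref{thm:comutative}) and the marked-permutation case (\cref{thm:Hopffunctor}) as the two extreme instances of a single mechanism. The structural backbone is \cref{cor:factsthm}: since the only relations among irreducible coinvariants under $\cdot$ are reorderings of a fixed multiset of factors, the monoid $(\mathcal{G}(h), \cdot)$ ought to be a free partially commutative (trace) monoid $M(\mathcal{I}, \theta)$, where $\mathcal{I}$ is the set of irreducible coinvariants and $\theta \subseteq \mathcal{I} \times \mathcal{I}$ records which pairs of irreducibles commute. The commutative presheaves correspond to $\theta$ being complete (every pair commutes, so the monoid is free commutative), while the permutation and marked-permutation presheaves sit at the opposite end, where $\theta$ is empty and the monoid is free. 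First I would verify carefully that \cref{cor:factsthm} really yields a trace presentation, namely that every reordering relation is generated by transpositions of adjacent commuting irreducibles, with no hidden relations that merge distinct factors.

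Granting this, the candidate generating set $\mathcal{S}$ is essentially forced: it should be the set of \emph{Lyndon traces}, the trace-monoid analogue of Lyndon words built over the alphabet $\mathcal{I}$ equipped with the independence relation $\theta$. These specialize to ordinary Lyndon words of irreducibles when $\theta = \emptyset$, recovering the construction used for $\mathtt{Per}$ and $\mathtt{MPer}$, and to single irreducibles when $\theta$ is complete, recovering \cref{thm:comuUFT}. The second ingredient of \cref{sec:strat}, the bijection $f$, is then supplied by the Lyndon factorization theorem for trace monoids: every element of $\mathcal{G}(h)$ factors uniquely as a non-increasing $\cdot$-product of Lyndon traces. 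This gives a PBW-type indexing of $\mathcal{G}(h)$ by multisets of $\mathcal{S}$, exactly as in \cref{thm:Lyndonmperfact}.

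The remaining, and by far the hardest, step is the triangularity \eqref{eq:product}: I must fix a linear order $\preceq$ on $\mathcal{G}(h)$ and show that for every multiset $S$ of Lyndon traces, the quasi-shuffle expansion of $\prod_{s \in S} \pat_s$ has the full $\cdot$-product $f(S)$ as its unique $\preceq$-maximal term, occurring with a positive coefficient $c_{f(S),S}$, while all other terms are strictly smaller. The difficulty is that the product rule \eqref{eq:prodrule} is governed by quasi-shuffles, which produce not only the disjoint merge (the one realizing $\cdot$ on disjoint supports) but also every partial overlap of the factors; one must prove, uniformly over all associative presheaves, that overlapping strictly decreases the chosen statistic while the non-overlapping merge survives with multiplicity at least one. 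I would build $\preceq$ from a size-refining statistic together with the Lyndon order, so that the leading term of a quasi-shuffle is forced to be the maximal-size merge, and the positivity of $c_{f(S),S}$ would follow from the existence of at least one genuine collision-free quasi-shuffle realizing $f(S)$.

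I expect this last step to be the main obstacle, precisely because in the two known cases the corresponding triangularity estimates were obtained by exploiting the specific combinatorics of disjoint union and of inflation, and it is not at all clear that a presheaf-independent ``overlaps are smaller'' principle holds in the generality of \cref{conj:freeness}. A natural intermediate target would therefore be to prove the conjecture for all trace monoids equipped with a prescribed, well-behaved notion of size, isolating exactly which property of the quasi-shuffle coefficients $\binom{c}{a,b}$ is needed to close the argument; proving that such a property is automatic for every associative presheaf, or producing a counterexample, seems to be the crux of the whole problem.
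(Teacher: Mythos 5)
The statement you are addressing is stated in the paper as \cref{conj:freeness}, an open conjecture; the paper offers no proof of it, so there is nothing to compare your argument against except the general strategy of \cref{sec:strat} and \cref{rem:vargasispowerful}, with which your plan is consistent. But your proposal is a programme, not a proof, and it has two genuine gaps. First, the claim that $(\mathcal{G}(h),\cdot)$ is a free partially commutative (trace) monoid $M(\mathcal{I},\theta)$ does not follow from \cref{cor:factsthm}. That corollary only says that any two factorizations into irreducibles use the same multiset of factors; it does not say that the fiber $\Pi^{-1}(a)$ of \cref{prob:fibers} is a full commutation class generated by adjacent transpositions of a \emph{globally defined} independence relation $\theta\subseteq\mathcal{I}\times\mathcal{I}$. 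A fiber could contain some rearrangements of a word and not others in a way incompatible with any trace presentation (for instance, whether two irreducibles may be swapped could depend on the surrounding letters), and the paper's own example of marked graphs under the inflation product (\cref{fig:starindecrel}) is precisely a case where the relations are nontrivial and no factorization theorem is known. Even in the marked permutation case the paper has to prove \cref{thm:isomonoids} by a delicate analysis of DC intervals; nothing of the sort is available in general.

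Second, and as you yourself concede, the triangularity step \eqref{eq:product} is the actual mathematical content of the conjecture and you do not supply it. In both known cases the inequality asserting that overlapping quasi-shuffles are strictly smaller is proved using the specific combinatorics of the presheaf (disjointness of supports for commutative products in \cref{thm:comutUFTfree}; the factor-breaking and factor-preserving lemmas, \cref{lm:bbl} and \cref{lm:bplol}, for inflation), and in the marked permutation case the argument even depends on the particular choice of the order $\leq_{per}$ on irreducibles (cf.\ the remark following the statement of \cref{lm:bbl}). A presheaf-independent version of this estimate is exactly what is missing, so your proposal should be read as a reasonable reduction of the conjecture to two precise open sub-problems rather than as a proof.
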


\section{Substructure algebras}

In this section we describe the construction of the pattern Hopf algebra mentioned in \cref{thm:conHopfalgebra}.
We establish that for a combinatorial presheaf $h$, $\mathcal{A}(h)$ is an algebra; see \cref{thm:algfunctor}.
Moreover, if $h$ is connected and endowed with an associative structure, then $\mathcal{A}(h)$ is a Hopf algebra; see \cref{thm:conHopfalgebra}.
We also describe the space of primitive elements in $\mathcal{A}(h)$, clarify that $\mathcal{A}$ is in fact functorial; see \cref{thm:functoriality}, and find some identities and properties of the pattern functions in \cref{prop:Magnum,prop:representable}.

\subsection{Coinvariants and pattern algebras}

Given a combinatorial presheaf $h$, let $[n]=\{1, \dots , n\} $ for $n$ non-negative integer.
Then, recall that we define the \textit{coinvariants} of $h$ as the family
$$\mathcal{G}(h) = \biguplus_{n\geq 0} h[n]_{\sim } \, .$$

For an object $a\in \biguplus_J h[J] $, we defined the \textit{pattern function} above in \cref{defin:pattern}.
We have the following observation, that allows us to define pattern functions on equivalence classes.

\begin{obs}\label{prop:welldef}
This definition only depends on the isomorphism type of $a, b$.
That is, if $a_1\in h[I_1], a_2\in h[I_2], b_1\in h[J_1], b_2\in h[J_2]$ are so that $a_1\sim a_2, b_1\sim b_2$ then $\pat_{a_1}(b_1) = \pat_{a_2}(b_2)$.
\end{obs}

This allows us to define the pattern functions $ \pat_a $ for $a \in \mathcal{G}(h)$ as function in $ \mathcal{F}(\mathcal{G}(h), k)$.

%
%

Recall that we write 
$$\mathcal{A}(h) : = \spn \{ \pat_a | \, a \in \mathcal{G}(h) \} \subseteq \mathcal{F}(\mathcal{G}(h), k )\, , $$
for the linear space spanned by all pattern functions.

\begin{rem}\label{rem:lipatfunc}
If two coinvariants $a, b$ are such that $|a| \geq |b|$ and $a\neq  b$, then $\pat_a (b)=0$.
We also have $\pat_b(b) = 1$.
Hence, the set $\{ \pat_a | a\in \mathcal{G}(h) \}$ is a basis of $\mathcal{A}(h)$.
\end{rem}

For $a, b $ objects and $c \in h[C]$, we defined the \textit{quasi-shuffle} number as follows:
\begin{equation}\label{eq:covernmr}
\begin{split}
\binom{c}{a, b} = \left| \{(I, J) \, \text{ such that } \, \,  I \cup J = C \, ,\, \, c|_{I} \sim a, \, c|_{J} \sim b \} \right| \, .  
\end{split}
\end{equation}
And a \textit{quasi-shuffle} as a pair that contributes to the coefficient above.
This is invariant under the equivalence classes of $\sim $, as it can be show in a similar way to \cref{prop:welldef}.

In \cref{thm:algfunctor} we observe that $\mathcal{A}(h) $ is a subalgebra of $\mathcal{F}(\mathcal{G}(h), k)$ with the pointwise multiplication structure and unit $ \sum_{c \in h[\emptyset ]} \pat_c $.
We present now the proof.

\begin{proof}
Fix $x\in h[I]$, and note that $\pat_a(x) \pat_b(x) $ counts the following
\begin{equation}
\begin{split}
\pat_a(x) \pat_b(x) &= \left| \{A \subseteq I \text{ s.t. } x|_A \sim a \} \times \{B \subseteq I \text{ s.t. } x|_B \sim b \}\right| \\
				   &= \left| \{(A, B) \text{ s.t. } A ,  B \subseteq I,  \, x|_A \sim a, \,  x|_B \sim b \} \right| \\	
				   &= \sum_{C\subseteq I} \left| \{(A, B) \text{ s.t. } A \cup B = C,  \, x|_A \sim a, \,  x|_B \sim b,   \} \right| \\
				   &= \sum_{C\subseteq I} \binom{x|_C}{a, b} = \sum_{c\in \mathcal{G}(h)} \binom{c}{a, b}\pat_c(x) \, .
\end{split}
\end{equation}

Hence, the space $\mathcal{A}(h) $ is closed for the product of functions.
Further, it is easy to observe that $\sum_{a \in h[\emptyset ] } \pat_a $ is the constant function equal to one, so this is a unit and $\mathcal{A}(h)$ is an algebra, concluding the proof.
\end{proof}

\begin{cor}[Products and quasi-shuffles]\label{cor:shufflesandquasishuffles}
Let $c\in h[I]$ and $b_i\in h[J_i]$ for $i=1, \dots , k$, and let 
$$\binom{a}{b_1, \dots , b_k} \coloneqq \left|\left\{(J_1, \dots , J_k) \text{ s.t. } \bigcup_{i=1}^k J_i = I, \, \, a|_{J_i} \sim b_i \, \, \forall i=1, \dots , k \right\} \right| \, .$$

Then we have that
$$ \prod_{i=1}^k \pat_{b_i} = \sum_c \binom{c}{b_1, \dots , b_k} \pat_c   \, . $$
\end{cor}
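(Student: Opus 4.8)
The plan is to generalize the pointwise counting argument from the proof of \cref{thm:algfunctor} directly from two factors to $k$ factors. Induction on $k$ via \eqref{eq:prodformula} is also possible, but it would force me to prove an auxiliary associativity identity $\binom{c}{b_1,\dots,b_k} = \sum_a \binom{a}{b_1,\dots,b_{k-1}}\binom{c}{a,b_k}$ for quasi-shuffle numbers, whereas the direct argument sidesteps this. First I would fix an object $x\in h[I]$ and evaluate the left-hand side there. Since $\pat_{b_i}(x)$ counts the subsets $A_i\subseteq I$ with $x|_{A_i}\sim b_i$, the product $\prod_{i=1}^k \pat_{b_i}(x)$ is the cardinality of the Cartesian product of these sets, i.e.\ the number of tuples $(A_1,\dots,A_k)$ with $A_i\subseteq I$ and $x|_{A_i}\sim b_i$ for every $i$.

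Next I would sort these tuples by the value of the union $C:=\bigcup_{i=1}^k A_i\subseteq I$, which partitions the set of tuples, giving
$$\prod_{i=1}^k \pat_{b_i}(x) = \sum_{C \subseteq I} \left|\left\{(A_1, \dots, A_k) : \bigcup_i A_i = C, \, x|_{A_i} \sim b_i \ \forall i \right\}\right|.$$
By the functoriality of restriction (so that $x|_{A_i} = (x|_C)|_{A_i}$ whenever $A_i\subseteq C$), the inner cardinality is exactly the coefficient $\binom{x|_C}{b_1,\dots,b_k}$ as defined in the statement.

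To finish I would regroup the sum over $C$ according to isomorphism type. The key point, which is the multi-factor analogue of the $\sim$-invariance remarked after \eqref{eq:covernmr}, is that $\binom{d}{b_1,\dots,b_k}$ depends only on the isomorphism class of $d$; this is proved exactly as in \cref{prop:welldef} by transporting a witnessing tuple $(A_1,\dots,A_k)$ along a bijection realizing an isomorphism $d\sim d'$. Granting this, all subsets $C$ with $x|_C\sim c$ contribute the same value $\binom{c}{b_1,\dots,b_k}$, and the number of such $C$ is precisely $\pat_c(x)$ by definition of the pattern function. Summing over coinvariants $c\in\mathcal{G}(h)$ then yields $\prod_{i=1}^k \pat_{b_i}(x) = \sum_c \binom{c}{b_1,\dots,b_k}\pat_c(x)$, and since $x\in h[I]$ was arbitrary, the functional identity follows.

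The only genuine obstacle is the isomorphism-invariance of the multi-factor coefficient $\binom{d}{b_1,\dots,b_k}$, since this is exactly what legitimizes collapsing the sum over subsets $C$ into a sum over coinvariants $c$; everything else is bookkeeping. As that invariance is a routine extension of \cref{prop:welldef}, the corollary costs essentially no more than the two-factor case \eqref{eq:prodformula}.
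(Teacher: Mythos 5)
Your proof is correct and is essentially the paper's argument: the paper leaves the corollary unproved, and the intended justification is exactly the $k$-fold version of the displayed computation in the proof of Theorem~\ref{thm:algfunctor} (partition the tuples by the union $C$, identify the inner count with $\binom{x|_C}{b_1,\dots,b_k}$, and collapse over isomorphism classes using the $\sim$-invariance noted after \eqref{eq:covernmr}). Your identification of that invariance as the one point needing a remark, handled as in Proposition~\ref{prop:welldef}, matches the paper's own treatment of the two-factor case.
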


\begin{defin}[Shuffles and quasi-shuffles]\label{defin:qsands}
If an object $a$ is such that $\binom{a}{b_1, \dots , b_k}  >0 $ we say that $a$ is a quasi-shuffle of $b_1, \dots , b_k$.
In addition, if $|a| = \sum_{i=1}^k |b_i|$, we say that $a$ is a \textit{shuffle} of $b_1, \dots, b_k$.
\end{defin}

\subsection{Coproducts on pattern algebras}

In this section we consider an associative presheaf $(h, \ast, 1)$.
Concretely, our combinatorial presheaf $h$ is endowed with an associative product $\ast $ and a unit $1 \in h[\emptyset ]$.

\begin{defin}[Product structure in $\mathcal{G}(h)$]\label{defin:prodonG}
If $(h, \ast, 1)$ is an associative presheaf, then $\mathcal{G}(h)$ inherits an associative product.
If $a $ is an object, we denote its equivalence class under $\sim $ by $\bar{a}$ in this remark.
The associative product in $\mathcal{G}(h)$ is defined as follows:

Let $a \in h[n_1], b \in h[n_2] $ and denote $[n_1+1, n_1 + n_2] = \{ n_1 +1 , \dots , n_1 + n_2 \}$.
Consider $st $ the order preserving map $st : [n_1+1, n_1 + n_2] \to [n_2]$, and let $b'  = h[st](b)$.
Then we define the product in $\mathcal{G}(h)$ as $\bar{a}\cdot \bar{b} : = \overline{a \ast b'} \in h[n_1 + n_2]_{\sim } $.

It is a direct computation to see that $\overline{a\ast b}$ does not depend on the representative chosen for $\overline{a}$ and $\overline{b}$.
Thus we have a well defined operation in $\mathcal{G}(h)$.
\end{defin}

Recall that in \cref{eq:coprodformula}, we defined the coproduct in $\mathcal A (h)$ as 
$$\Delta  \pat_a := \sum_{\substack{b, c\in \mathcal{G}(h) \\  b \cdot c = a }} \pat_b \otimes \pat_c \, .$$
We also introduce the counit $\epsilon: \mathcal{A}(h) \to k$ that sends $\pat_a $ to $\mathbb{1}[a = 1]$, where $1$ stands for the unit in the associative presheaf $h$.
%

We recall and prove \cref{thm:conHopfalgebra} here.
%
%

Remark that we refer to the pattern Hopf algebra of $(h, \ast, 1)$ simply by $\mathcal{A}(h)$ instead of $\mathcal{A}(h, \ast, 1)$, for simplicity of notation, whenever emphasis on the role of $\ast $ is not needed.

\begin{proof}[Proof of \cref{thm:conHopfalgebra}]
First, we note that $\Delta $ is trivially coassociative, from the associativity axioms of $\ast $ described in \cref{obs:naturality}.
That $\epsilon $ is a counit follows from the unit axioms on $(h, \ast , 1)$.

We first claim that, for $a, x, y\in \mathcal{G}(h)$,
\begin{equation}\label{eq:coprodprop}
\Delta \pat_a (x,  y ) = \pat_a (x\cdot y ) \, ,
\end{equation}
using the natural inclusion $\mathcal{F}(\mathcal{G}(h), k)^{ \otimes 2} \subseteq \mathcal{F}(\mathcal{G}(h)^2, k)$.

Indeed, take representatives $x\in~h[n_1]$ and $y\in~h[n_2]$ with no loss of generality, and write $B=[n_1]$, $C = \{n_1+1, \dots , n_1+n_2\}$.
Let $st $ be the order preserving map between $C$ and $[n_2]$.
Then
\begin{equation}
\begin{split}
\pat_a (x\ast y ) &= \left| \{J \subseteq B\sqcup C  \text{ such that }(x\ast y)|_J \sim a   \} \right|  \\
						   &=   \left| \{J \subseteq B\sqcup C  \text{ such that } x|_{J\cap B} \ast y|_{J\cap C} \sim a   \} \right|  \\
						   &=  \sum_{\substack{b, c\in \mathcal{G}(h) \\ a \sim b \cdot c }} \left| \{J \subseteq B\sqcup C \text{ such that } x|_{J\cap B} \sim b , \, \, y|_{J\cap C} \sim c   \} \right|  \\
						   &=   \sum_{\substack{b, c\in \mathcal{G}(h) \\ a \sim b \cdot c }} \left| \{J \subseteq B  \text{ s.t. } x|_J \sim b \} \times  \{ J \subseteq C  \text{ s.t. }  y|_{J} \sim c   \} \right|  \\
						   &= \sum_{\substack{b, c\in \mathcal{G}(h) \\ a \sim b \cdot c }} \pat_{b}(x)\pat_{c}(y) = \Delta \pat_a(x, y)   \, .
\end{split}
\end{equation}

Since both functions take the same values on $\mathcal{G}(h)^2 $, we conclude that \eqref{eq:coprodprop} holds.

The following are the bialgebra axioms that we wish to establish:
\begin{align*}
\Delta(\pat_a \pat_b ) &= \Delta(\pat_a ) \Delta(\pat_b ) \, , \\
\Delta \left( \sum_{a \in h[\emptyset ]} \pat_a \right) &=  \left( \sum_{a \in h[\emptyset ]} \pat_a \right) \otimes \left( \sum_{a \in h[\emptyset ]} \pat_a \right) \, , \\
\epsilon (\pat_a \pat_b ) &= \epsilon ( \pat_a ) \epsilon (\pat_b)  \, , \\
\epsilon \left( \sum_{a \in h[\emptyset ]} \pat_a \right) &= 1\, .
\end{align*}
The last three equations are direct computations.
For the first equation, we use \eqref{eq:coprodprop} as follows:
take $a, b, x, y\in \mathcal{G}(h)$, then 
$$\Delta(\pat_a \pat_b )(x, y) = (\pat_a \pat_b )(x \cdot  y) =  \pat_a ( x \cdot y)  \pat_b ( x \cdot y) = ( \Delta\pat_a  \Delta\pat_b) (x, y)  \, . $$
This concludes the first part of the proof.

Now suppose that $h$ is connected, so that the zero degree component $\mathcal{A}(h)_0$ is one dimensional.
From \cite{takeuchi71}, because $\mathcal{A} ( h ) $ is commutative, it suffices to establish that the group-like elements of  $\mathcal{A} ( h ) $ are invertible.
Now it is a direct observation that any group-like element is in $\mathcal{A}(h)_0$, which is a one dimensional algebra, so all non-zero elements are invertible.
This concludes that $\mathcal{A}(h)$ is a Hopf algebra.
\end{proof}

\subsection{The space of primitive elements\label{sec:primelem}}

In this section we give a description of the primitive elements of any pattern Hopf algebra.
Denote by $P_H \coloneqq \{a \in H | \, \Delta a = a\otimes 1 + 1 \otimes a \}$ the space of \textit{primitive elements}, where $1$ stands for the unit of the Hopf algebra.

\begin{defin}[Irreducible objects]\label{defin:indec}
Let $(h, \ast, 1)$ be a connected associative presheaf.
An object $t\in h[I] $ with $t\neq 1 $ is called \textit{irreducible} if any two objects $a \in h[A], b \in h[B]$ such that $a \ast b = t $ and $A\sqcup B = I$ have either $a = 1$ or $b = 1$.

The notion of irreducibility lifts to $\mathcal{G}(h)$.
That is, a coinvariant $t \in\mathcal{G}(h)$ with $t \neq 1$ is \textit{irreducible} if any $a, b\in\mathcal{G}(h)$ such that $a \cdot b = t $ have either $a = 1$ or $b = 1$.
We have that $a\in h[I]$ is irreducible if and only if the corresponding equivalence class $\bar{a} \in \mathcal{G}(h)$ is irreducible.
The family of irreducible equivalent classes in $\mathcal{G}(h)$ is denoted by $\mathcal{I} (h) \subseteq \mathcal{G}(h)$.
\end{defin}

\begin{prop}[Primitive space of pattern Hopf algebras]\label{prop:primit}
Let $(h,  \ast, 1_h)$ be a connected associative presheaf, and $\mathcal{I}(h)$ the set of irreducible elements in $\mathcal{G}(h)$.
Then the primitive space is given by:

$$P_{\mathcal{A}(h)} = \spn \{\pat_a | a \in \mathcal{I}(h)\} \, . $$
\end{prop}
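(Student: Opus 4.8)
The plan is to prove the two inclusions separately, using the coproduct formula \eqref{eq:coprodformula} together with the description of irreducibility from \cref{defin:indec}. For the easy inclusion, $\spn\{\pat_a \mid a\in\mathcal{I}(h)\} \subseteq P_{\mathcal{A}(h)}$, I would take $a\in\mathcal{I}(h)$ and compute $\Delta\pat_a$ directly from \eqref{eq:coprodformula}. The sum runs over pairs $(b,c)$ with $b\cdot c = a$; since $a$ is irreducible, every such factorization forces $b=1$ or $c=1$. Recalling from \cref{rem:inflqshuf} and the connectedness of $h$ that $1$ is a two-sided unit for $\cdot$ on $\mathcal{G}(h)$, the only contributing pairs are $(1,a)$ and $(a,1)$, giving $\Delta\pat_a = \pat_a\otimes\pat_1 + \pat_1\otimes\pat_a$. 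Since $h$ is connected, $\pat_1$ is exactly the unit of $\mathcal{A}(h)$ identified in \cref{thm:algfunctor}, so $\pat_a$ is primitive. Linearity extends this to the full span.

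\textbf{The reverse inclusion.} For $P_{\mathcal{A}(h)} \subseteq \spn\{\pat_a \mid a\in\mathcal{I}(h)\}$, I would argue by separating an arbitrary primitive element into its irreducible and reducible parts. Write a general element as $x = \sum_{a\in\mathcal{G}(h)} \lambda_a \pat_a$, a finite sum, using that $\{\pat_a\}_{a\in\mathcal{G}(h)}$ is a basis by \cref{rem:lipatfunc}. Applying $\Delta$ and comparing with $x\otimes\pat_1 + \pat_1\otimes x$, I would extract, for each \emph{reducible} coinvariant $a$ (i.e.\ $a\neq 1$ and $a = b\cdot c$ with both $b,c\neq 1$), the coefficient of a cross term $\pat_b\otimes\pat_c$ with $b,c\neq 1$ appearing in $\Delta x$. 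The key point is that such cross terms cannot be cancelled by the primitivity target $x\otimes\pat_1 + \pat_1\otimes x$, whose tensor factors always contain a $\pat_1$. Thus primitivity forces the coefficient of every reducible generator to vanish, leaving $x$ supported on $\mathcal{I}(h)\cup\{1\}$; a separate check of the degree-zero part eliminates $\pat_1$ (a primitive cannot have a nonzero $\pat_1$ component since $\Delta\pat_1 = \pat_1\otimes\pat_1$).

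\textbf{Main obstacle.} The delicate step is the coefficient extraction in the reverse inclusion, because the coproduct is not multiplicity-free in general: a single cross term $\pat_b\otimes\pat_c$ may receive contributions from several reducible $a$ with $a = b\cdot c$, and moreover $b\cdot c$ can coincide for distinct pairs. To handle this cleanly I would exploit the grading and a minimality argument: among all reducible $a$ with $\lambda_a\neq 0$, pick one of minimal size, so that the term $\pat_b\otimes\pat_c$ coming from its factorization $a=b\cdot c$ with $|b|,|c|<|a|$ cannot be produced by any smaller reducible generator in the support. Combined with \cref{rem:integerdomain}-style cancellativity and the fact that all the quasi-shuffle-free coproduct coefficients in \eqref{eq:coprodformula} are genuine (each factorization contributes exactly $1$), this isolates $\lambda_a$ and forces it to be zero, completing the argument.
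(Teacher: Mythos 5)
Your proposal is correct and follows essentially the same route as the paper: expand a primitive element in the basis $\{\pat_a\}$, apply $\Delta$, and use that $\{\pat_{g_1}\otimes\pat_{g_2}\}$ is a basis of $\mathcal{A}(h)^{\otimes 2}$ to force the coefficient of every reducible coinvariant to vanish (the paper leaves the forward inclusion and the degree-zero check as straightforward observations, which you spell out). The only remark worth making is that your ``main obstacle'' is not actually present: since $\cdot$ is a well-defined binary operation on $\mathcal{G}(h)$, each pair $(b,c)$ contributes to the cross term $\pat_b\otimes\pat_c$ only through the single coinvariant $b\cdot c$, so the coefficient of $\pat_b\otimes\pat_c$ in $\Delta x$ is exactly $\lambda_{b\cdot c}$ and no minimality argument is needed.
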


\begin{proof}
If $f$ is irreducible, it is straightforward to observe that $\pat_f$ is primitive.
On the other hand, let $a=\sum_{f\in \mathcal{G}(h)} c_{f } \pat_{f }$ be a generic primitive element from $\mathcal{A}(h)$.
Then, the equation $\Delta a = a \otimes \pat_{1_h} + \pat_{1_h} \otimes \, a $ becomes
$$\sum_{g_1, g_2 \in \mathcal{G}(h) } c_{g_1 \cdot g_2 } \pat_{g_1}\otimes \pat_{g_2 } = \sum_{f} c_{f} ( \pat_{f} \otimes \pat_{1_h} + \pat_{1_h} \otimes \pat_{f } ) \, . $$

From \cref{rem:lipatfunc}, $\{\pat_{g_1} \otimes  \pat_{g_2 }\}_{g_1, g_2\in \mathcal{G}(h) } $ is a basis of $\mathcal{A}(h)^{\otimes 2}$, so we have that for any $g_1\neq 1_h, g_2 \neq 1_h$,
$$c_{g_1\cdot  g_2 } = 0 \, .  $$
Thus we conclude that $a$ is a linear combination of the set $\left\{ \pat_{f} |  f \in \mathcal{I}(h) \right\}$, as desired.
\end{proof}

\subsection{The pattern algebra functor}

In this section, we see that the mapping $\mathcal{A}$ is in fact functorial, bringing the parallel between the pattern Hopf algebras and the Fock functors even closer.

\begin{thm}[Pattern algebra maps]\label{thm:functoriality}
If $f: h\Rightarrow j $ is a morphism of combinatorial presheaves, then the following formula

\begin{equation}\label{eq:functordefin}
\mathcal{A}[f](\pat_a) := \sum_{f(b) = a } \pat_b \in \mathcal{A}(h)\, ,
\end{equation}
defines an algebra map $\mathcal{A}[f]: \mathcal{A}(j) \to \mathcal{A}(h)$.

Further, if $f$ is a morphism of associative presheaves, then $\mathcal{A}[f]$ is a bialgebra morphism.
Consequently, if $h, j$ are connected, this is a Hopf algebra morphism.
\end{thm}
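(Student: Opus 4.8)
The plan is to verify directly that the formula \eqref{eq:functordefin} defines a map with the required algebraic properties, in three stages: first that $\mathcal{A}[f]$ is well-defined and linear, then that it is an algebra map, and finally that it respects the coproduct and counit.

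First I would check that the right-hand side of \eqref{eq:functordefin} makes sense. For a fixed coinvariant $a\in\mathcal{G}(j)$, the sum runs over $b\in\mathcal{G}(h)$ with $f(b)=a$; since $f$ is a natural transformation it preserves size, so $|b|=|a|$ and only finitely many $b$ occur. The key well-definedness point is that $f$ descends to coinvariants: because $f$ is natural with respect to the relabeling maps in $\mathtt{Set}_{\times}$, isomorphic objects of $h$ have isomorphic images in $j$, so $f:\mathcal{G}(h)\to\mathcal{G}(j)$ is a well-defined function and the fibers $f^{-1}(a)$ are sets of coinvariants. Linearity is then automatic since we extend by linearity on the basis $\{\pat_a\}$ of $\cref{rem:lipatfunc}$.

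Next I would establish that $\mathcal{A}[f]$ is an algebra map. The cleanest route is to use naturality of $f$ with respect to the \emph{restriction} maps, which is precisely the content of $f$ being a presheaf morphism: for any $b\in h[I]$ and $J'\subseteq I$ one has $f(b|_{J'})=f(b)|_{J'}$. From this I expect to derive the pullback identity
\begin{equation*}
\pat_a\bigl(f(x)\bigr)=\sum_{f(b)=a}\pat_b(x)=\bigl(\mathcal{A}[f]\pat_a\bigr)(x),
\end{equation*}
valid for all $x\in\mathcal{G}(h)$; in other words $\mathcal{A}[f]$ is simply precomposition with $f$ on the level of functions, $\mathcal{A}[f](\varphi)=\varphi\circ f$. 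Once this is known, that $\mathcal{A}[f]$ is an algebra map is immediate, because pointwise multiplication of functions is preserved by precomposition, $(\varphi\psi)\circ f=(\varphi\circ f)(\psi\circ f)$, and the unit $\sum_{a\in j[\emptyset]}\pat_a$ pulls back to $\sum_{a\in h[\emptyset]}\pat_a$. It also shows that $\mathcal{A}[f]$ indeed lands inside $\mathcal{A}(h)$, since a finite $\mathbb{Q}$-combination of pattern functions pulls back to a finite combination of pattern functions.

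For the bialgebra statement, assume now $f$ is a morphism of associative presheaves, so it intertwines the products: $f(b'\ast_h c')=f(b')\ast_j f(c')$, which descends to $f(x\cdot y)=f(x)\cdot f(y)$ on $\mathcal{G}(h)$. The compatibility with $\Delta$ reduces, via the characterization \eqref{eq:coprodprop}, to the identity $\Delta\varphi(x,y)=\varphi(x\cdot y)$ applied on both sides: evaluating $\Delta\bigl(\mathcal{A}[f]\pat_a\bigr)$ and $(\mathcal{A}[f]\otimes\mathcal{A}[f])\Delta\pat_a$ at a pair $(x,y)$, both become $\pat_a\bigl(f(x)\cdot f(y)\bigr)=\pat_a\bigl(f(x\cdot y)\bigr)$ using that $f$ preserves $\cdot$. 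Counit compatibility follows because $f$ preserves the unit $1$, so $\epsilon\circ\mathcal{A}[f]=\epsilon$. The Hopf morphism claim in the connected case is then free: a bialgebra morphism between Hopf algebras automatically commutes with the antipode. The main obstacle I anticipate is purely bookkeeping rather than conceptual — one must be careful that $f^{-1}(a)$ is interpreted correctly on coinvariants and that the pullback identity is proved at the level of objects (using restriction-naturality) before passing to coinvariants, since that identity is what makes every subsequent verification a one-line consequence.
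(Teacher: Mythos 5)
Your proposal is correct, and it organizes the argument differently from the paper. The paper proves the algebra-map property by directly verifying the identity on quasi-shuffle coefficients, $\binom{f(c')}{a,b}=\sum_{f(a')=a,\,f(b')=b}\binom{c'}{a',b'}$, via a counting argument, and proves compatibility with $\Delta$ by computing both $\mathcal{A}[f]^{\otimes 2}(\Delta\pat_a)$ and $\Delta(\mathcal{A}[f]\pat_a)$ symbolically as the same sum $\sum_{f(b'\cdot_h c')=a}\pat_{b'}\otimes\pat_{c'}$. You instead first establish the pullback identity $\mathcal{A}[f]\pat_a=\pat_a\circ f$ (which follows from restriction-naturality $f(x)|_{J'}=f(x|_{J'})$ by grouping the occurrences $J'$ according to the coinvariant of $x|_{J'}$), and then everything downstream is formal: precomposition preserves pointwise products, and the coproduct compatibility reduces via \eqref{eq:coprodprop} to $f(x\cdot y)=f(x)\cdot f(y)$, checked by evaluating both sides of the bialgebra axiom at pairs $(x,y)$ and using that $\{\pat_{b}\otimes\pat_{c}\}$ separates points of $\mathcal{G}(h)^2$. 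Both routes use the same naturality inputs; yours is more conceptual and makes the algebra-map property a one-line consequence, while the paper's yields the explicit combinatorial identity on the structure constants, which is of some independent interest. Two minor points to make explicit when writing this up: the finiteness of the fiber $f^{-1}(a)$ needs that $h[n]$ is a finite set (not just that $f$ preserves size), and the evaluation argument for $\Delta$ requires the injectivity of the inclusion $\mathcal{A}(h)^{\otimes 2}\subseteq\mathcal{F}(\mathcal{G}(h)^2,\mathbb{Q})$, which the paper records via \cref{rem:lipatfunc}.
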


The sum in \eqref{eq:functordefin} is finite, so this functor is well defined.

\begin{proof}
The map $\mathcal{A}[f]$ is linear and sends the unit $\sum_{a \in j[\emptyset ] } \pat_a $ of $\mathcal{A}(j)$ to 
$$\sum_{a \in j[\emptyset ] } \sum_{\substack{b \in h[\emptyset ] \\ f(b) = a}} \pat_b = \sum_{b \in j[\emptyset ] } \pat_b  \, . $$
Hence, to establish that $\mathcal{A}[f]$ is an algebra morphism, it suffices to show that it preserves the product on the basis, \textit{i.e.} that
$\mathcal{A}[f](\pat_a \pat_b) = \mathcal{A}[f](\pat_a ) \mathcal{A}[f](\pat_b )$.

It is easy to see that this holds if we have that
$$\binom{f(c')}{a, b} = \sum_{\substack{a', b'\in \mathcal{G}(h) \\ f(a') = a, f(b')=b}} \binom{c'}{a', b'} \, , $$
for any $a, b \in \mathcal{G}(j), c'  \in \mathcal{G}(h)$.

Indeed, if $a\in j[A], b\in j[B] $ and $c'\in h[C]$, then for any set $I$, by naturality of $f$, we have $f(c')|_I = f(c'|_I)$.
Then 
\begin{align*}
\binom{f(c')}{a, b}&= \left| \Big\{ (I, J) \text{ s.t. }  f(c')|_I \sim a, \, f(c')|_J\sim b, \, I\cup J = C\Big\} \right| \\
				  &= \sum_{\substack{a', b' \in \mathcal{G}(h) \\ f(a') = a, \, f(b') = b} } \left|\Big\{ (I, J)   \text{ s.t. }   c'|_I \sim a', \, c'|_J\sim b', \, I\cup J = C \Big\} \right| \\
				  &= \sum_{\substack{a', b' \in \mathcal{G}(h) \\ f(a') = a, \, f(b') = b} }\binom{c'}{a', b'}  \, .
\end{align*}

Now suppose further that $f$ is an associative presheaf morphism between $(h, \ast_h, 1_h)$ and $(j, \ast_j, 1_j)$.
That $\mathcal{A}[f]$ preserves counit follows from $f(1_h) = 1_j$.
That $\mathcal{A}[f]$ preserves $\Delta $ follows because both $\mathcal{A}[f]^{\otimes 2} (\Delta \pat_a) $ and $\Delta (\mathcal{A}[f] \pat_a ) $ equal
$$\sum_{\substack{b', c' \in \mathcal{G}(h)\\ f(b' \cdot_h c') = a}} \pat_{b'} \otimes \pat_{c'} \, , $$
since $f(b' \ast_h  c') = f(b') \ast_j f( c')$, concluding the proof.
\end{proof}

\begin{defin}[The pattern algebra functor]
Because of \cref{thm:functoriality}, we can define the \textit{pattern algebra} contravariant functor $\mathcal{A}:\mathtt{CPSh} \to \mathtt{Alg}_{k}$.

This functor when restricted to the subcategory of associative presheaves is also a functor to bialgebras as $\mathcal{A}:Mon(\mathtt{CPSh}) \to \mathtt{BiAlg}_{k}$.
\end{defin}

\begin{smpl}[Graph patterns in permutations]
Consider again the associative presheaves $(\mathtt{Gr}, \oplus , \emptyset )$ on graphs and $(\mathtt{Per}, \oplus , \emptyset )$ on permutations.
The inversion graph of a permutation is a presheaf morphism $\mathtt{Inv}:\mathtt{Per} \Rightarrow \mathtt{Gr}$ defined as follows:
given a permutation $\pi = ( \leq_P, \leq_V)$ on the set $I$, we take the graph $\mathtt{Inv}(\pi)$ with vertex set $I$ and an edge between $i,  j\in I$, $i \neq j$ if 
$$i\leq_P j\Leftrightarrow j\leq_V i \, . $$

It is a direct observation that this map is indeed an associative presheaf morphism.
As a consequence, we have a Hopf algebra morphism $\mathcal{A}(\mathtt{Inv}):\mathcal{A}(\mathtt{Gr}) \to \mathcal{A}(\mathtt{Per})$.
\end{smpl}


\subsection{Magnus relations and representability}

The goal of the following two sections is to show that arithmetic properties of the permutation pattern algebra that are established in \cite{vargas14} are actually general properties of pattern algebras.

\begin{prop}[Magnus inversions]\label{prop:Magnum}
Let $h$ be a combinatorial presheaf, and consider the maps $M, N: \spn \mathcal{G}(h)  \to  \spn \mathcal{G}(h) $ given on the basis elements $a\in \mathcal{G}(h)$ by
$$M: a \mapsto \sum_{b\in \mathcal G (h) } \pat_{b }(a) b \, , $$
$$N: a \mapsto \sum_{b\in \mathcal G (h) } (-1)^{|a|+|b|} \pat_{b }(a) b \, , $$
and extended linearly to $\spn \mathcal{G}(h)$.
Then the maps $M, N$ are inverses of each other.
\end{prop}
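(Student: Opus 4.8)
The plan is to verify directly that both composites $N\circ M$ and $M\circ N$ act as the identity on the basis $\mathcal{G}(h)$ of $\spn\mathcal{G}(h)$. Fixing a coinvariant $a$, expanding $M$ and then $N$, and collecting terms gives
\[
N(M(a)) = \sum_{c\in\mathcal G(h)}\Big(\sum_{b\in\mathcal G(h)}(-1)^{|b|+|c|}\pat_b(a)\,\pat_c(b)\Big)\, c \, ,
\]
so the entire statement reduces to the single coefficient identity
\[
\sum_{b\in\mathcal G(h)}(-1)^{|b|+|c|}\pat_b(a)\,\pat_c(b) = \mathbb 1[a=c] \, .
\]
The computation for $M\circ N$ is identical except that the sign $(-1)^{|b|+|c|}$ is replaced by $(-1)^{|a|+|b|}$; since the only surviving term will force $|c|=|a|$, the two signs coincide there, and the same argument settles both composites at once.

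Next I would interpret the inner sum combinatorially. Pick a representative $a\in h[I]$. By definition $\pat_b(a)$ counts subsets $J\subseteq I$ with $a|_J\sim b$, and for each such $J$ the isomorphism $b\sim a|_J$, together with the functoriality of restriction which gives $(a|_J)|_K = a|_K$ for $K\subseteq J$, identifies $\pat_c(b)$ with the number of $K\subseteq J$ satisfying $a|_K\sim c$. Summing over $b$ therefore amounts to summing over $J$, and (ignoring the sign momentarily)
\[
\sum_{b}\pat_b(a)\,\pat_c(b) = \Big|\{(K,J): K\subseteq J\subseteq I,\ a|_K\sim c\}\Big| \, .
\]
Recording that $|b|=|J|$ for the relevant $b$, the signed coefficient becomes $\sum_{\substack{K\subseteq J\subseteq I\\ a|_K\sim c}}(-1)^{|J|+|c|}$. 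The main care point of the proof lies precisely in this reindexing: one must transfer the count of sub-subsets $K\subseteq\mathbb X(b)$ to subsets $K\subseteq J$ through a non-canonical isomorphism $b\sim a|_J$, and rely on \cref{prop:welldef} to guarantee that $\pat_c(b)=\pat_c(a|_J)$ is independent of the chosen isomorphism.

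Finally I would perform the sum over $J$ for fixed $K$. Writing $S=J\setminus K$, which ranges over subsets of $I\setminus K$, the inner sum factors as
\[
\sum_{K\subseteq J\subseteq I}(-1)^{|J|+|c|} = (-1)^{|K|+|c|}\!\!\sum_{S\subseteq I\setminus K}(-1)^{|S|} = (-1)^{|K|+|c|}\,0^{\,|I|-|K|} \, ,
\]
with the convention $0^{0}=1$; this vanishes unless $K=I$. But $K=I$ combined with $a|_K\sim c$ forces $a\sim c$, that is $c=a$ in $\mathcal G(h)$, and then the lone surviving term contributes $(-1)^{|I|+|c|}=(-1)^{2|a|}=1$. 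This gives exactly $\mathbb 1[a=c]$, proving $N\circ M=\mathrm{id}$; the symmetric computation with the sign $(-1)^{|a|+|b|}$ yields $M\circ N=\mathrm{id}$, so $M$ and $N$ are mutually inverse.
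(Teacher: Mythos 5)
Your proposal is correct and follows essentially the same route as the paper: both reduce the statement to the signed coefficient identity $\sum_b (\pm 1)\pat_b(a)\pat_c(b)=\pm\mathbb 1[a=c]$, reinterpret the double sum as a count of nested pairs $K\subseteq J\subseteq I$ with $a|_K\sim c$ via functoriality of restriction, and kill everything except the full set by the alternating-sum cancellation. The only cosmetic difference is that you dispatch both composites at once by observing the two sign conventions agree on the sole surviving term, where the paper runs the two parallel computations separately from the same lemma.
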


This result was already known in the context of words in \cite{hoffman00} and \cite{aguiar10} and in the context of permutations in \cite{vargas14}.

\begin{proof}

We start by proving a relation on pattern functions.
Let $a\in h[I], c\in \mathcal G (h)$.
Then we claim that
$$ \sum_{b \in \mathcal{G}(h)} (-1)^{|b|} \pat_b(a)\pat_c(b) = (-1)^{|a|} \mathbb{1}[a \in c ] \, .$$

Indeed, for each pair of sets $(J, B) $ such that $J \subseteq B \subseteq I $ and $a|_J \in c$ it corresponds an object $b = a|_B$, and the patterns $B$ of $b$ in $a$, and $J$ of $c$ in $b$.
Observe that this is a bijective correspondence, so we have
$$ \sum_{b \in \mathcal{G}(h)} (-1)^{|b|} \pat_b(a)\pat_c(b) = \sum_{\substack{J \subseteq I\\ a|_J \in c}} \sum_{J\subseteq B \subseteq I } (-1)^{|B|} = (-1)^{|I|} \mathbb{1}[ a|_I \in c ] =  (-1)^{|a|} \mathbb{1}[a \in c ] \, .$$

It follows that
\begin{align*}
N(M(a)) =& N \left(\sum_{b \in \mathcal{G}(h)}  \pat_b(a ) b\right) = \sum_{b \in \mathcal{G}(h)} \pat_b(a) \sum_{c \in \mathcal{G}(h)} (-1)^{|b|+|c|} \pat_c(b) \\
	    =&  \sum_{c \in \mathcal{G}(h)} c (-1)^{|c|} \sum_{b \in \mathcal{G}(h)} (-1)^{|b|} \pat_b(a) \pat_c(b) \\
	    =& \sum_{c \in \mathcal{G}(h)} c (-1)^{|c|} (-1)^{|a|} \mathbb{1}[a= c ] = a \, ,
\end{align*}
and that
\begin{align*}
M(N(a)) =& M \left(\sum_{b \in \mathcal{G}(h)}  (-1)^{|a|+|b|} \pat_b(a ) b\right) = \sum_{b \in \mathcal{G}(h)} \pat_b(a)  \sum_{c \in \mathcal{G}(h)} (-1)^{|a|+ |b|} \pat_c(b) \\
	    =&  \sum_{c \in \mathcal{G}(h)} c (-1)^{|a|} \sum_{b \in \mathcal{G}(h)} (-1)^{|b|} \pat_b(a) \pat_c(b) \\
	    =& \sum_{c \in \mathcal{G}(h)} c (-1)^{|a|} (-1)^{|a|} \mathbb{1}[a= c ] = a \, ,
\end{align*}
as desired.
\end{proof}

\subsection{The Sweedler dual and pattern algebras}

Let $A$ be an algebra over a field $\mathbb{K}$.
Then the Sweedler dual $A^{\circ }$ is defined in \cite{sweedler69} as
$$A^{\circ } \coloneqq \{ g \in A^* | \ker g \text{ contains a cofinite ideal}  \} \, ,$$
where a \textit{cofinite ideal} $J$ of $A$ is an ideal such that $A / J $ is a finite dimensional vector space over $\mathbb{K}$.
There, it is established that $A^{\circ }$ is a coalgebra, where the coproduct map is the transpose of the product map.

Consider the following right action of $A $ on $A^*$: for $f\in A^*$ and $a, b\in A$, 
\begin{equation}\label{eq:actionsweedler}
(f \cdot b) (a) \coloneqq f(a b) \, .
\end{equation}

A description of $A^{\circ }$ is given in \cite[Proposition 6.0.3]{sweedler69}, as all \textit{representable} elements $f\in A^*$, that is all $f$ such that the vector space $\{f \cdot b\}_{b\in A}$ is finite dimensional.

Let $(h, \ast , 1)$ now be an associative presheaf, and consider the algebra generated by $\cdot $ in $\mathcal{G}(h)$:
$$\Alg ( h) \coloneqq \spn (\mathcal{G}(h), \cdot )\, . $$
Then $\mathcal{A}(h) \subseteq \Alg (h)^* = \mathcal{F} (\mathcal{G}(h), k)$.
In fact, the following proposition guarantees that we have $\mathcal{A}(h) \subseteq \Alg (h)^{\circ }$.
Observe that the coproduct in $\mathcal{A}(h)$ is precisely the transpose of the multiplication in $\spn (\mathcal{G}(h), \cdot )$, therefore $\mathcal{A}(h) \subseteq \Alg (h)^{\circ }$ is an inclusion of coalgebras.

\begin{prop}[Pattern algebra and the Sweedler dual]\label{prop:representable}
Let $h$ be an associative presheaf.
Then, its pattern algebra satisfies $\mathcal{A}(h) \subseteq \Alg (h)^{\circ }$.
\end{prop}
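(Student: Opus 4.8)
The plan is to invoke Sweedler's representability criterion \cite[Proposition 6.0.3]{sweedler69}: an element $f \in \Alg(h)^*$ lies in $\Alg(h)^{\circ}$ precisely when the subspace $\{f \cdot b \mid b \in \Alg(h)\}$ is finite-dimensional, where the action is the one in \eqref{eq:actionsweedler}. Since $\mathcal{A}(h)$ is spanned by the pattern functions $\pat_a$ and $\Alg(h)^{\circ}$ is a linear subspace of $\Alg(h)^*$, it suffices to show that each $\pat_a$ is representable in this sense.

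First I would compute the action of a single coinvariant $y \in \mathcal{G}(h)$ on $\pat_a$. For any test coinvariant $x \in \mathcal{G}(h)$, the definition \eqref{eq:actionsweedler} gives $(\pat_a \cdot y)(x) = \pat_a(x \cdot y)$, and then the central identity \eqref{eq:coprodprop} established in the proof of \cref{thm:conHopfalgebra} rewrites this as $\pat_a(x \cdot y) = \sum_{b' \cdot c' = a} \pat_{b'}(x)\pat_{c'}(y)$. Reading this as an identity of functions in the variable $x$, I obtain
\[
\pat_a \cdot y = \sum_{\substack{b', c' \in \mathcal{G}(h) \\ b' \cdot c' = a}} \pat_{c'}(y)\, \pat_{b'} \, ,
\]
so that $\pat_a \cdot y$ is a linear combination of the pattern functions $\pat_{b'}$ indexed by the left factors $b'$ occurring in factorizations $b' \cdot c' = a$. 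Crucially, the set of indices $b'$ does not depend on $y$; only the scalar coefficients $\pat_{c'}(y)$ do. Extending by linearity over $b \in \Alg(h) = \spn \mathcal{G}(h)$, the whole subspace $\{\pat_a \cdot b \mid b \in \Alg(h)\}$ is contained in $\spn\{\pat_{b'} \mid \exists\, c',\ b' \cdot c' = a\}$.

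It then remains to argue that this spanning set is finite, which is the only real point of the argument. Because the associative product adds sizes, any factorization $b' \cdot c' = a$ forces $|b'| + |c'| = |a|$, hence $|b'| \leq |a|$; and since $h[n]$ is finite for every $n$, there are only finitely many coinvariants of size at most $|a|$. Thus $\{\pat_a \cdot b \mid b \in \Alg(h)\}$ is finite-dimensional, $\pat_a$ is representable, and the criterion yields $\pat_a \in \Alg(h)^{\circ}$. As noted just before the statement, the coproduct of $\mathcal{A}(h)$ is exactly the transpose of the multiplication of $\spn(\mathcal{G}(h), \cdot)$, so this inclusion is automatically one of coalgebras. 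I do not anticipate a genuine obstacle here: the identity \eqref{eq:coprodprop} does all the structural work, and the required finiteness is immediate from the grading of the presheaf by size.
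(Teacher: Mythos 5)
Your proof is correct and follows essentially the same route as the paper: both verify Sweedler's representability criterion by expanding $\pat_a \cdot y$ via the identity $\pat_a(x\cdot y)=\Delta\pat_a(x,y)$ and bounding the span by pattern functions indexed by factors of $a$, which are finitely many by the size grading. The only cosmetic difference is which side of the action you use (you land on the left factors $b'$, the paper on the right factors $a_2$), and this does not affect the argument.
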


\begin{proof}
We claim that each pattern function is representable, which concludes the proof according to \cite[Proposition 6.0.3]{sweedler69}.
In fact, for $a, b, c\in \mathcal G (h)$:
\begin{align*}
\pat_a \cdot \, b \, (c) &=  \pat_a ( b \cdot c) = \Delta \pat_a (b, c)\\
				  &= \sum_{\substack{a_1, a_2 \in \mathcal G (h) \\ a=a_1 \cdot a_2 }}\pat_{a_1}(b) \pat_{a_2}(c) 
\end{align*}
That is, $\pat_a \cdot  \, b = \sum_{ a=a_1 \cdot a_2 }\pat_{a_1}(b) \pat_{a_2} $.
It follows that 
$$\spn \{\pat_a \cdot \, b\}_{b\in \mathcal{G}(h)} \subseteq \spn \{\pat_{a_2} \}_{\substack{a_2\in \mathcal{G}(h) \\|a_2| \leq |a|}}\, , $$
 which is finite dimensional.
\end{proof}

\section{Freeness of commutative presheaves\label{sec:comutfree}}


We start this section with a discussion of factorization theorems for combinatorial presheaves.
We will observe that the factorization of an object in a connected associative presheaf into irreducibles is unique up to some possible commutativity.
This is a general fact for associative presheaves, and is a central point in establishing freeness of any pattern Hopf algebra so far in the literature.

We also dedicate some attention to commutative presheaves.
An almost immediate consequence of the general fact discussed above is that the pattern algebra of a commutative presheaf is free.

We also explore specific combinatorial presheaves that can be endowed with a commutative structure.
The main examples are graphs, already studied in \cite{whitney1932coloring}, marked graphs (see \cref{sec:mgraphpatalg}), set partitions (see \cref{sec:spartpatalg}), simplicial complexes and posets.

\subsection{Relations in general connected associative presheaves}

Consider a connected associative presheaf $(h, \ast, 1)$.
In this section we will not assume that $h$ is commutative.
Recall that for objects $a\in h[I], b\in h[J]$, if $A \subseteq I\sqcup J$ then $(a\ast b)|_A = a|_{A\cap I} \ast a|_{A\cap J}$, as described in \cref{obs:naturality}.
We recall as well that an object $t\in h[I]$ is called \textit{irreducible} if $t\neq 1$ and if $t = a \ast b$ only has trivial solutions.

\begin{defin}[Set composition and set partition]\label{defin:setcomp}
Let $I$ be a finite set.
A \textit{set composition} of $I$ is a list $(B_1, \dots B_k)$, that can also be written as $B_1 | \dots | B_k$, of pairwise disjoint nonempty subsets of $I$, such that $I = \bigcup_i B_i$.
We denote by $\Pi_I$ the family of set compositions of $I$.
A \textit{set partition} of $I$ is a family $\boldsymbol{\pi } = \{I_1, \dots , I_k\}$ of pairwise disjoint nonempty sets, such that $\bigcup_i I_i = I$.
We write $\Sigma_I $ for the family of set partitions of $I$.

If $\opi \in \Pi_I$ is a set composition, we can define its underlying set partition of $I$ by disregarding the order of the list.
We denote it by $\boldsymbol{\lambda } (\opi )$.
\end{defin}

\begin{defin}[Factorization of objects and coinvariants]
Consider an associative presheaf $h$ that is connected, and an object $o\in h[I]$.
A factorization of $o$ is a word $(x_1, \dots , x_k)$ of objects such that $x_1 \ast \dots \ast x_k = o$.

A factorization $(x_1, \dots , x_k)$ of $o$ is a factorization \textit{into irreducibles} when each $x_i$ is an irreducible object for $i=1, \dots , l$.

A factorization of a coinvariant $a\in \mathcal{G}(h)$ is a decomposition of the form $a = s_1 \cdots s_k$.
This factorization is \textit{into irreducibles} if each $s_i$ is \textit{irreducible}.

It is clear that an object is irreducible if and only if its coinvariant is irreducible.
\end{defin}

To a factorization $(x_1, \dots , x_k)$ of $o\in h[I]$, there is a corresponding set composition $\opi = (I_1, \dots , I_k)  \models I$, where $x_i \in h[I_i]$.
This is indeed a set composition of $I$ by the definition of $\ast $.
This correspondence between factorizations and set compositions is injective.
Indeed, because the presheaf is connected, $o|_{A_i} = x_i $.

Conversely, not all set compositions yield a factorization.
The irreducible elements are precisely the ones where only the trivial set composition with one block yields a factorization.
%


\begin{thm}\label{thm:comuUFT}
Let $h$ be an associative presheaf and $o \in h[I]$ an object.
If $\opi_1, \opi_2$ are factorizations into irreducibles of $o$, then their underlying set partitions $\boldsymbol{\lambda }(\opi_1)$ and $\boldsymbol{\lambda }(\opi_2)$ are the same.
\end{thm}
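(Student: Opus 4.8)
The plan is to show that the two underlying set partitions refine one another, whence they must coincide. Write $\opi_1 = (x_1, \dots, x_k)$ with associated set composition $(I_1, \dots, I_k) \models I$ and $\opi_2 = (y_1, \dots, y_l)$ with set composition $(J_1, \dots, J_l) \models I$, so that $\boldsymbol{\lambda}(\opi_1) = \{I_1, \dots, I_k\}$ and $\boldsymbol{\lambda}(\opi_2) = \{J_1, \dots, J_l\}$. Recall from the discussion preceding the statement that, since $h$ is connected (so $x|_\emptyset = 1$ for every object $x$), restricting $o$ to a block recovers the corresponding factor: $o|_{I_a} = x_a$, and likewise $o|_{J_b} = y_b$. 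In particular each $o|_{I_a}$ is irreducible.

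The heart of the argument is to fix a block $I_a$ and locate it inside a single block of $\boldsymbol{\lambda}(\opi_2)$. Using the factorization $\opi_2$ together with the naturality of $\ast$ from \cref{obs:naturality} and the functoriality of restriction, I would compute
\[
o|_{I_a} = y_1|_{I_a \cap J_1} \ast \cdots \ast y_l|_{I_a \cap J_l} = o|_{I_a \cap J_1} \ast \cdots \ast o|_{I_a \cap J_l}.
\]
Now $o|_{I_a}$ is irreducible, and I would invoke the elementary fact (proved by induction on the number of factors, directly from the definition of irreducibility in \cref{defin:indec}) that if an irreducible object is written as a product $a_1 \ast \cdots \ast a_l$, then all but one of the factors equals the unit, i.e.\ sits on the empty set. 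Since $h$ is connected, a factor $o|_{I_a \cap J_b}$ equals $1$ precisely when $I_a \cap J_b = \emptyset$. Hence exactly one intersection $I_a \cap J_{b_0}$ is nonempty, and as the sets $I_a \cap J_b$ partition $I_a$ this forces $I_a \subseteq J_{b_0}$. Thus every block of $\boldsymbol{\lambda}(\opi_1)$ lies in a block of $\boldsymbol{\lambda}(\opi_2)$, that is, $\boldsymbol{\lambda}(\opi_1)$ refines $\boldsymbol{\lambda}(\opi_2)$.

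By the symmetric argument, exchanging the roles of the two factorizations, $\boldsymbol{\lambda}(\opi_2)$ refines $\boldsymbol{\lambda}(\opi_1)$ as well; two set partitions that refine one another are equal, which gives the claim. The only genuinely delicate point is the middle step: one must combine the naturality of the product with irreducibility to rule out $I_a$ being split across several blocks $J_b$, and the passage from ``a factor is trivial'' to ``the corresponding intersection is empty'' is exactly where the connectedness of $h$ is essential. Everything else is routine bookkeeping with restrictions and set compositions.
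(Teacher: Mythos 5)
Your proposal is correct and follows essentially the same route as the paper: restrict $o$ to a block of one factorization, expand via naturality of $\ast$ as a product of restrictions of the other factorization's factors, and use irreducibility to force all but one factor to be the unit, giving mutual refinement of the two set partitions. The paper's proof is terser but identical in substance; your extra remarks on where connectedness enters (recovering $o|_{I_a}=x_a$ and identifying trivial factors with empty intersections) are accurate and match \cref{rem:inflqshuf} and the discussion preceding the theorem.
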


In particular, the number of irreducible factors $j(o)$ and the multiset of irreducible factors $\mathfrak{fac}(o)$ of an object are well defined and do not depend on the factorization into irreducibles at hand.

\begin{proof}
Suppose that $o$ has two distinct factorizations 
\begin{equation}\label{eq:twofactorizationsofa}
 o = l_1 \ast  \ldots \ast l_{k}  =  r_1 \ast \ldots \ast r_s  \, ,
\end{equation}
where $\opi_1 = A_1 | \dots | A_{k} $ and $\opi_2 = B_1 | \dots |  B_s $ are set compositions of $X$ such that $l_i = o|_{A_i}$ and $r_i = o|_{B_i}$.
Note that
$$l_j = o|_{A_j} = r_1|_{B_1\cap A_j} \ast \ldots \ast r_s|_{B_s\cap A_j} \, .$$

Because $l_j$ is irreducible, for each $j$ there is exactly one $i$ such that $B_i\cap A_j \neq \emptyset $, so $A_j \subseteq B_i$, and $\boldsymbol{ \tau }(\opi_1) $ is coarser than $\boldsymbol{ \tau }(\opi_2) $.
By a symmetrical argument, we obtain that $\boldsymbol{ \tau }(\opi_1) $ is finer than $\boldsymbol{ \tau }(\opi_2) $, so we conclude that $\boldsymbol{ \tau }(\opi_1) = \boldsymbol{ \tau }(\opi_2) $.
It follows that the number of factors and the multiset of factors are well defined.
\end{proof}

This implies the following for factorizations on $\mathcal{G}(h)$:

\begin{cor}\label{cor:factsthm}
Consider an associative presheaf $(h, \ast, 1)$, together with the usual product in $\mathcal{G}(h)$, denoted by  $\cdot $.
Consider also $a \in \mathcal{G}(h)$.
If $a=x_1\cdots x_l = s_1 \cdots s_s $ are two factorizations into irreducibles, then the multisets $\{x_1, \dots ,  x_l\}$ and $\{s_1,  \dots , s_t\}$ coincide.
\end{cor}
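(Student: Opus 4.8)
The plan is to deduce \cref{cor:factsthm} directly from \cref{thm:comuUFT} by lifting factorizations of the coinvariant $a \in \mathcal{G}(h)$ to factorizations of a representative object, and then transporting the conclusion back to $\mathcal{G}(h)$. First I would fix a representative object $o \in h[I]$ with $\bar{o} = a$. The product $\cdot$ on $\mathcal{G}(h)$ was defined in \cref{defin:prodonG} precisely so that $\overline{x \ast y} = \bar{x}\cdot\bar{y}$, and this does not depend on the chosen representatives. Hence a factorization $a = x_1 \cdots x_l$ into irreducible coinvariants can be realized by choosing representative objects $\tilde{x}_i$ (on suitably disjoint, consecutively-labeled index sets) so that $\tilde{x}_1 \ast \cdots \ast \tilde{x}_l$ is an object in the isomorphism class $a$; after applying a single relabeling bijection we may assume this object equals $o$ on the nose. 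Since $\bar{x}_i$ irreducible is equivalent to $\tilde{x}_i$ irreducible (noted right after the definition of factorization), this produces a genuine factorization of $o$ into irreducible objects, with underlying set composition $\opi_1$.

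Doing the same for the second factorization $a = s_1 \cdots s_t$ yields a second factorization of (a representative isomorphic to) $o$ into irreducibles, with set composition $\opi_2$. Now I would invoke \cref{thm:comuUFT}: the underlying set partitions $\boldsymbol{\lambda}(\opi_1)$ and $\boldsymbol{\lambda}(\opi_2)$ coincide, and in particular the number of blocks agrees, so $l = t$, and the \emph{multiset} of irreducible factors $\mathfrak{fac}(o)$ is well defined independently of the factorization. Each block $A_j$ of $\opi_1$ carries the irreducible object $o|_{A_j} = \tilde{x}_j$, and its coinvariant is $x_j$; since the two set partitions are equal block-for-block, each block $B_i$ of $\opi_2$ equals some block $A_{\sigma(i)}$, and therefore $o|_{B_i} = o|_{A_{\sigma(i)}}$ as objects. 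Passing to coinvariants gives $s_i = x_{\sigma(i)}$, so the two multisets $\{x_1,\dots,x_l\}$ and $\{s_1,\dots,s_t\}$ coincide.

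The main obstacle, and the only real content beyond bookkeeping, is the lifting step: asserting that an equality of coinvariants $a = x_1\cdots x_l$ can be upgraded to an \emph{object-level} equality $o = \tilde{x}_1 \ast \cdots \ast \tilde{x}_l$ with the $\tilde{x}_i$ sitting on the blocks of a single set composition of $I$. This requires choosing the representatives and relabelings coherently so that the iterated $\ast$-product lands exactly on $o$ rather than merely on something isomorphic to it, and here one leans on \cref{rem:integerdomain}-style cancellation together with the naturality/associativity axioms of \cref{obs:naturality}. Once a common representative $o$ is secured, \cref{thm:comuUFT} does all the combinatorial work, and the remaining identification of multisets is immediate from $o|_{A_j} = \tilde{x}_j$. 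I would therefore keep the write-up short, emphasizing the equivalence between object factorizations and coinvariant factorizations and then citing \cref{thm:comuUFT} for the conclusion.
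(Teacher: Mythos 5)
Your proposal is correct and matches the paper's (implicit) argument: the paper treats \cref{cor:factsthm} as an immediate consequence of \cref{thm:comuUFT}, obtained exactly by lifting the two coinvariant factorizations to object-level factorizations of a common representative via naturality of $\ast$ under relabelings, and then reading off the factors as restrictions to the blocks of the common set partition. The only superfluous element is your appeal to \cref{rem:integerdomain}-style cancellation, which concerns marked permutations specifically and is not needed here --- the naturality and associativity axioms of \cref{obs:naturality} already suffice for the lifting step.
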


In particular, the number of irreducible factors $j(a)$ and the multiset of irreducible factors $\mathfrak{fac}(a)$ of a coinvariant are well defined and do not depend on the factorization into irreducibles at hand.

Given an alphabet $\Omega $, denote the set of words on $\Omega $ by $\mathcal{W}(\Omega )$.

\begin{prob}[Factorization theorems in associative presheaves]\label{prob:fibers}
Given an associative presheaf $(h, \ast, 1)$, describe $\mathcal{E}(h)$, the collection of fibers of the map
$$\Pi : \mathcal{W}(\mathcal{I}(h)) \to \mathcal{G}(h)  \, , $$
defined by taking the product of the letters of a word in $\mathcal G (h)$.
\end{prob}

The fibers of this map, that is, the sets of words $\Pi^{-1}(a)$, correspond to the different factorizations of an object $a \in \mathcal{G}(h)$.
In general, according to \cref{cor:factsthm}, a fiber consists of a set of words of irreducible elements that result from one another by permuting its letters.
Whenever $h$ is a commutative presheaf, \cref{cor:UFTcommut} tells us that the fibers are as big as possible, restricted to \cref{cor:factsthm}.
This means that, in this case, each fiber is a set of words resulting from a permutation of a word in $\mathcal{W}(\mathcal{I} (h) )$.

Take the example of  the combinatorial presheaf $\mathtt{Per}$, where no non-trivial rearrangement of the irreducible factors of a factorization of an object yields a distinct factorization of the same object.
In this example, the fibers $\Pi^{-1}(a)$ are singletons.
In the case of marked permutations, in \cref{thm:isomonoids} we show that only transpositions of specific irreducible marked permutations remain factorizations of the same coinvariant $a$.

\begin{rem}\label{rem:vargasispowerful}
It can be seen that the freeness proof in \cref{thm:comutUFTfree} depends solely on the corresponding unique factorization theorem, that is, on $\mathcal{E} (h)$.
That is also the case in the proof given in \cite{vargas14} for the presheaf on permutations, and the proof below for marked permutations.

This motivates \cref{conj:freeness}, as it seems that the freeness of the pattern Hopf algebra only depends on the description of the fibers $\mathcal{E} (h)$.
In this way, for instance, we can immediately see that the pattern Hopf algebra $\mathcal A (\mathtt{SComp})$, defined below, is free.
This follows because it has a unique factorization theorem of the type of the one in the associative presheaf on permutations.
\end{rem}

\subsection{Proof of freeness on commutative presheaves}

Recall that a commutative presheaf is an associative presheaf $(h, \ast, 1)$ such that $\ast_{A, B} = \ast_{B, A} \circ \text{ twist}_{A, B} $; see \cref{def:asspresheaf}.
In the case of commutative presheaves, we have that any rearrangement of a factorization of an object $o$ yields another factorization of $o$.
For this reason, in the context of commutative presheaves, a set partition is also referred to as a factorization, and we have the following.

\begin{cor}\label{cor:UFTcommut}
Let $(h, \ast, 1)$ be a connected commutative presheaf, and $a \in \mathcal{G}(h) $ an object.
Then, $a$ has a unique factorization into irreducibles $l_1, \dots , l_{j(a)} \in \mathcal{I}(h) $ up to commutativity of factors.
Equivalently, if $a \in h[X]$, there is a unique set partition that corresponds to a factorization of $o$ into irreducibles.
\end{cor}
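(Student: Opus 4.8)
The plan is to deduce \Cref{cor:UFTcommut} directly from \Cref{thm:comuUFT} (equivalently \Cref{cor:factsthm}) by showing that in the commutative case the inclusion ``same underlying set partition'' can be upgraded to ``same factorization up to reordering,'' and moreover that every ordering of a given set partition into irreducibles is actually realized as a factorization. First I would recall the correspondence, established just before \Cref{thm:comuUFT}, between factorizations $(x_1,\dots,x_k)$ of $o\in h[I]$ and the set compositions $(I_1,\dots,I_k)\models I$ with $x_i = o|_{I_i}$, which is injective. By \Cref{thm:comuUFT}, any two factorizations into irreducibles have the same underlying \emph{set partition}, so the multiset of blocks $\{I_1,\dots,I_k\}$ is forced; what remains is to understand which \emph{orderings} of those blocks give valid factorizations and to see that the induced multiset of irreducible factors is determined.

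The key step is to use commutativity to show that \emph{every} ordering of the blocks of the common set partition yields a factorization of the same object $o$. Concretely, suppose $\{I_1,\dots,I_k\}$ is the set partition coming from some factorization $o = o|_{I_1}\ast\cdots\ast o|_{I_k}$. For any permutation $\sigma$ of $\{1,\dots,k\}$, repeated application of the commutativity axiom $a\ast b = b\ast a$ (for objects on disjoint ground sets) together with associativity from \Cref{obs:naturality} lets me rearrange the product into $o|_{I_{\sigma(1)}}\ast\cdots\ast o|_{I_{\sigma(k)}}$ while preserving the value $o$. Hence the fiber over $o$ (in the language of \Cref{prob:fibers}) consists of \emph{all} reorderings of a single factorization, which is exactly the assertion that the factorization into irreducibles is unique up to commutativity of factors. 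The equivalent set-partition statement is then immediate: there is a unique set partition of $X$ whose blocks $I_j$ satisfy that $o|_{I_j}$ is irreducible and $\ast_j o|_{I_j} = o$, since the blocks are determined by \Cref{thm:comuUFT} and any such block decomposition does give a factorization.

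I would then observe that the multiset of irreducible factors is well defined: since all orderings produce the same object and each block $I_j$ determines the single factor $o|_{I_j}$, the multiset $\{o|_{I_1},\dots,o|_{I_k}\}$, and hence its image in $\mathcal{G}(h)$, depends only on $o$ (this is essentially $\mathfrak{fac}(o)$ from the remark after \Cref{thm:comuUFT}). Passing to coinvariants via \Cref{defin:prodonG}, the corresponding statement on $\mathcal{G}(h)$ is that $a = l_1\cdots l_{j(a)}$ with each $l_i\in\mathcal{I}(h)$ is unique up to permutation of the factors, which is precisely the claim.

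The main obstacle I anticipate is bookkeeping rather than conceptual: one must be careful that commutativity of the associative presheaf, as phrased in \Cref{def:asspresheaf}, applies to objects on \emph{disjoint} ground sets, which is exactly the situation here since the blocks $I_j$ partition $I$. The only subtle point is verifying that the rearrangement of an iterated product by transpositions of adjacent factors is legitimate — this requires combining the naturality/restriction identity $(a\ast b)|_{A\sqcup B} = a|_A\ast b|_B$ with associativity to isolate adjacent factors before swapping them — but this is a routine induction on the number of transpositions needed to realize $\sigma$. No genuinely new input beyond \Cref{thm:comuUFT} and the commutativity axiom is required.
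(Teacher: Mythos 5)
Your proposal is correct and follows essentially the same route as the paper, which derives \cref{cor:UFTcommut} immediately from \cref{thm:comuUFT} after noting (in the paragraph preceding the corollary) that commutativity makes every rearrangement of a factorization again a factorization of the same object. The paper leaves the details implicit; your write-up just makes explicit the injective correspondence between factorizations and set compositions and the adjacent-transposition argument, both of which are already present or routine in the text.
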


\begin{thm}[Freeness of pattern algebras with commutative products]\label{thm:comutUFTfree}
Let $(h, \ast, 1)$ be a connected commutative presheaf.
Consider $\mathcal{I} (h)\subseteq \mathcal{G}(h) $ the family of irreducible elements of $h$.

Then $\mathcal{A}(h)$ is free commutative, and $ \{\pat_{\iota } | \,  \iota \in \mathcal{I} (h) \} $ is a set of free generators of $\mathcal{A}(h)$.
\end{thm}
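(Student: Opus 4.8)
The plan is to follow exactly the strategy outlined in \cref{sec:strat}, instantiating its three ingredients in the commutative setting. The goal is to show that the monomials $\prod_{s\in S}\pat_s$, as $S$ ranges over multisets of $\mathcal{I}(h)$, form a basis of $\mathcal{A}(h)$. Since these monomials are indexed by multisets of irreducibles, and by \cref{cor:UFTcommut} multisets of irreducibles are in bijection with coinvariants $a\in\mathcal{G}(h)$ (send $a$ to its multiset of irreducible factors $\mathfrak{fac}(a)$), we already have a bijection $f$ between the candidate generating monomials and $\mathcal{G}(h)$. Concretely, to a coinvariant $a$ with factorization into irreducibles $a = l_1 \cdots l_{j(a)}$ we associate the monomial $\prod_{i} \pat_{l_i}$. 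Since $\{\pat_a\mid a\in\mathcal{G}(h)\}$ is already a basis by \cref{rem:lipatfunc}, it suffices to prove that the transition matrix between the two families is triangular with nonzero diagonal.

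For the order $\preceq$, I would use the number of irreducible factors, refining by size: declare $t \preceq a$ when $j(t) \ge j(a)$, or more robustly, order coinvariants so that shuffles of a fixed multiset of factors that strictly merge blocks come \emph{later}. The key computation is the expansion \eqref{eq:product}. Using \cref{cor:shufflesandquasishuffles}, for a coinvariant $a$ with $\mathfrak{fac}(a) = \{l_1,\dots,l_k\}$ we have
\begin{equation*}
\prod_{i=1}^{k}\pat_{l_i} = \sum_{c}\binom{c}{l_1,\dots,l_k}\pat_c \, ,
\end{equation*}
where $\binom{c}{l_1,\dots,l_k}$ counts tuples $(J_1,\dots,J_k)$ with $\bigcup_i J_i = \mathbb{X}(c)$ and $c|_{J_i}\sim l_i$. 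The coefficients are manifestly non-negative integers, so the only two things that must be checked are: first, that the diagonal coefficient $\binom{a}{l_1,\dots,l_k}$ is nonzero; and second, that every $c$ appearing with nonzero coefficient satisfies $c \preceq a$, with the diagonal term $c=a$ genuinely distinguished. The diagonal coefficient is nonzero because the defining factorization $a = l_1 \ast \cdots \ast l_k$ uses \emph{disjoint} blocks $J_i$ whose union is all of $\mathbb{X}(a)$ and with $a|_{J_i} \sim l_i$ (using \cref{rem:inflqshuf} and \cref{obs:naturality} in the connected case to guarantee $a|_{J_i}\sim l_i$), so it contributes at least one genuine shuffle.

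The main obstacle is controlling which $c$ appear and justifying the triangularity. Any $c$ with $\binom{c}{l_1,\dots,l_k} > 0$ has $|c| \le \sum_i |l_i| = |a|$, and equality $|c| = |a|$ forces the covering $(J_1,\dots,J_k)$ to be a \emph{disjoint} cover, i.e. a shuffle in the sense of \cref{defin:qsands}; in that case the $J_i$ partition $\mathbb{X}(c)$ and $c = l_1 \ast \cdots \ast l_k$ up to the order of blocks. By \cref{cor:UFTcommut} any such disjoint-block product with factors $\{l_1,\dots,l_k\}$ yields exactly the coinvariant $a$, because in the commutative case the factorization into irreducibles is unique up to commutativity and all reorderings give isomorphic objects. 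Hence the unique top-size term is $c=a$, while every other $c$ has $|c| < |a|$ (strictly smaller, coming from quasi-shuffles that merge some blocks). Taking $\preceq$ to be ordering by size then makes the expansion \eqref{eq:product} strictly upper-triangular with a nonzero diagonal entry. This is the crux: the commutativity hypothesis, funneled through \cref{cor:UFTcommut}, is precisely what guarantees the \emph{uniqueness} of the top term, ruling out any other coinvariant of full size $|a|$ sharing the same generating monomial. Triangularity with nonzero diagonal gives invertibility of the transition matrix, so the monomials $\{\prod_{s\in S}\pat_s\}$ form a basis, proving both freeness and that $\{\pat_\iota \mid \iota\in\mathcal{I}(h)\}$ is a free generating set.
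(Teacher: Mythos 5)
Your overall skeleton matches the paper's strategy, but the crux of your argument --- that ordering by size alone gives triangularity --- contains a genuine error. You claim that if $|c| = |a|$ and $\binom{c}{l_1,\dots,l_k} > 0$, then the disjoint cover $(J_1,\dots,J_k)$ forces $c = l_1 \ast \cdots \ast l_k$ up to reordering, hence $c \sim a$. This does not follow: a partition of $\mathbb{X}(c)$ into blocks with $c|_{J_i} \sim l_i$ does \emph{not} imply that $c$ is the product of its restrictions to those blocks. A concrete counterexample lives in $(\mathtt{Gr}, \uplus, \emptyset)$: take $l_1 = l_2 = K_1$, the one-vertex graph, so $a = K_1 \uplus K_1$ is the edgeless graph on two vertices. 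The single edge $K_2$ satisfies $\binom{K_2}{K_1, K_1} = 2$, since restricting $K_2$ to either singleton gives $K_1$; so $K_2$ is a shuffle of $l_1, l_2$ in the sense of \cref{defin:qsands}, has full size $|K_2| = |a| = 2$, yet $K_2 \neq a$. Your proposed expansion $\pat_{K_1}^2$ therefore has \emph{two} distinct terms of maximal size, and "ordering by size" is not triangular.

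The repair is exactly what the paper does: refine the order by the number of irreducible factors, declaring $\beta <_p \alpha$ when $|\beta| < |\alpha|$, or $|\beta| = |\alpha|$ and $j(\beta) < j(\alpha)$ (in the example, $j(K_2) = 1 < 2 = j(a)$, so $K_2$ is correctly subordinated). But then the uniqueness of the top term is no longer automatic and becomes the real content of the proof: one must show that a quasi-shuffle $\beta$ of $l_1,\dots,l_k$ with $|\beta| = |\alpha|$ \emph{and} $j(\beta) = j(\alpha)$ is necessarily isomorphic to $\alpha$. The paper does this by taking the irreducible factorization $b = s_1 \ast \cdots \ast s_{j(b)}$ with blocks $C_j$, intersecting with the cover blocks $B_i$, and using irreducibility of each $l_i$ to build a map $f$ with $B_i \subseteq C_{f(i)}$; the size and factor-count equalities then force $f$ to be a bijection with $B_i = C_{f(i)}$, and commutativity closes the argument. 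None of this appears in your proposal, and it is the step where both the commutativity hypothesis and \cref{thm:comuUFT} actually do their work.
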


\begin{proof}
We will show that the family
\begin{equation}\label{eq:basiscommutproof}
 \Bigg\{ \prod_{\iota \in \mathcal{L}} \pat_\iota \Big| \mathcal{L} \text{ multiset of elements in } \mathcal{I}(h) \Bigg\} \, ,
\end{equation}
is a basis for $\mathcal{A}(h)$.
The proof follows the strategy described in \cref{sec:strat}, by building an order $\leq $ in $\mathcal{G}(h) $ that is motivated in the unique factorization theorem in \cref{thm:comuUFT}.

Define the following partial strict order $<_p $ in $\mathcal{G}(h) $: we say that $\alpha <_p \beta $ if:
\begin{itemize}
\item $| \alpha  | < | \beta |$, or

\item $| \alpha  | = | \beta |$ and $j(\alpha ) < j(\beta)$.
\end{itemize}

In this way, $\leq_p$ is the order that we use to establish freeness.

Consider $\alpha \in \mathcal{G}(h)$ with $\alpha  = \iota_1  \cdots  \iota_{j(\alpha )} $ its unique factorization into irreducibles.
Then we claim
\begin{equation}\label{eq:triangcomu}
\prod_{i=1}^k \pat_{\iota_i} =\sum_{\beta \in \mathcal{G}(h)} \binom{\beta }{\iota_1, \dots, \iota_{j(\alpha )}} \pat_{\beta } =  \sum_{\beta \leq_p \alpha } c_{\beta } \pat_{\beta } \, ,
\end{equation}
where $c_a \geq 1$.
This implies that \eqref{eq:basiscommutproof} is a basis of $\mathcal{A}(h)$, and gives us the result.

Let us prove \eqref{eq:triangcomu}.
Pick representatives $l_i $ for $\iota_i$ such that $l_i \in h[A_i]$ and let also $a = l_1\ast \dots \ast l_{j(\alpha)}$ be an object.
Observe that the coinvariant of $a$ is $\alpha $.

First, observe that $a$ is a quasi-shuffle of $l_1, \dots , l_{j(\alpha)} $ by considering the patterns $A_1, \dots , A_{j(\alpha)}$; see \cref{rem:inflqshuf}.
Thus we have $c_{\alpha } \geq 1$.

To show that any term $\beta $ in \eqref{eq:triangcomu} with $c_{\beta } \neq 0$ has $ \beta \leq_p  \alpha $, consider the maximal $ \beta \in \mathcal G (h) $ that is a quasi-shuffle of $l_1, \dots , l_{j(\alpha)} $, and let $b$ be a representative in $\beta $,  such that $b\in h[Y]$.
By maximality, we have that $ \beta  \geq  \alpha$.
Our goal is to show that $b \sim a$.
Let $b= s_1 \ast \dots \ast s_{j(b)}$ be the unique factorization of $b$ into irreducibles, corresponding to the set partition $\{ C_1,  \dots ,  C_{j(b)} \}$.
Because $b$ is a quasi-shuffle of $l_1, \dots , l_{j(\alpha)} $, we can consider $B_1, \dots , B_{j(\alpha)} \subseteq Y $ sets such that $\bigcup_i B_i = Y$ and
$$b|_{B_i} \sim l_i  \, \, \, \text{ for } i=1, \dots , j(\alpha) \, . $$

In particular, we have that 
$$l_i \sim b|_{B_i} = s_1|_{C_1\cap B_i}\ast  \dots \ast s_{j(b)}|_{C_{j(b)}\cap B_i}\, , $$
 for $i=1 , \dots , j(\alpha)$.

By indecomposability of $l_i$, we have that, for each $i$, there is exactly one $j$ such that $C_{j}\cap B_i\neq \emptyset$.
From $\sqcup_j C_j = \bigcup_i B_i = Y$, we get that each $B_i$ is contained in some $C_j$.
So we can define a map $f:[j(\alpha)] \to [j(b)] $ such that $ B_i \subseteq C_{f(i)}$.
Thus, we have that $C_i = \bigcup_{k \in f^{-1}(i)} B_k $.

First observe that $|a|= \sum_i | A_i |= \sum_i |l_i | = \sum_i |  B_i| $ and $|b| = \sum_j | C_j | $ but $|b | = |Y| \leq \sum_j \sum_{i\in f^{-1}(j)} | B_i | = |a|$.
However, from $b \geq_p a$ we have that $|b| \geq |a|$ and thus we have $|b| = |a|$ and that the family $\{ B_1, \dots , B_{j(\alpha)} \}$ is disjoint.
Further, $f$ is a surjection, so we immediately have that $j(\alpha) \geq j(b)$ and because $b \geq_p a$, we must have an equality.
Thus, $f$ is a bijective map and we conclude that $C_{f(i)} = B_i $ for each $i=1, \dots , j(\alpha) = j(b)$.

We then conclude that 
$$ s_i = b|_{B_i} = b|_{C_{f(i)}} \sim  l_{f(i)}\, ,$$
and so, by commutativity, 
$$ b = s_1 \ast \dots \ast s_{j(b)} = l_1 \ast \dots \ast l_{j(\alpha)}  = a \, , $$
as desired.
\end{proof}

\subsection{Marked graphs}\label{sec:mgraphpatalg}

\begin{defin}[Marked graphs and two products]
For a finite set $I$, a marked graph $G^*$ on $I$ is a graph on the vertex set $I \sqcup \{*\}$.
This defines a combinatorial presheaf $\mathtt{MGr}$ via the usual notion of relabeling and induced subgraphs.

We can further endow the combinatorial presheaf $\mathtt{MGr}$ with two different associative presheaf structures.

First, the \textit{joint union}, $\vee $, which is defined as follows:
If $G_1^*\in \mathtt{MGr}[I], G_2^* \in \mathtt{MGr}[J]$ with $I\cap J = \emptyset$, then $G_1^*\vee_{I, J} G_2^*$ has no edges between $ I $ and $ J$, and the marked vertices are merged.

The second product, the \textit{inflation product} $ \star $ is defined as follows:
If $G_1^*\in \mathtt{MGr}[I], G_2^* \in \mathtt{MGr}[J]$ with $I\cap J = \emptyset$, then $G_1^*\star_{I, J} G_2^*$ results from  $G_1^*\vee_{I, J} G_2^*$ by adding the following edges: two vertices $i\in I, j\in J$ are connected in $G_1^*\star_{I, J} G_2^*$ if $i$ and $*$ are connected in $G_1^*$.
The unit of both products is the marked graph $1$ with a unique vertex and no edges.
\end{defin}

\begin{rem}
The graphs that are irreducible with respect to the $\vee $ product are the graphs $G^* $ such that the graph resulting from removing the marked vertex and its incident edges is a connected graph. In this case, we say that $G^* $ is $\vee$-connected.

In \cref{fig:mcongraphs} we have an example of a $\vee$ -connected marked graph and a $\vee$-disconnected marked graph.
\end{rem}

\begin{figure}[h]
\centering
\includegraphics[scale=0.7]{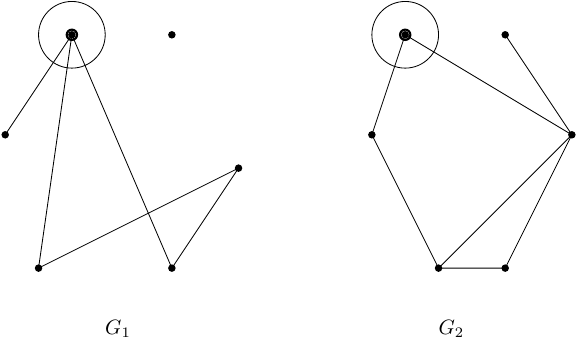}
\caption{A marked graph $G_1$ with three $\vee$-connected components and a $\vee$-connected graph $G_2$.\label{fig:mcongraphs}}
\end{figure}

Observe that $G_1^*\vee_{I, J} G_2^* = G_2^*\vee_{J, I} G_1^*$, so $\vee $ is a commutative operation, whereas $\star $ is not.
It follows from \cref{thm:comutUFTfree} that both $\mathcal{A}(\mathtt{MGr}, \vee, 1 ) $ and $\mathcal{A}(\mathtt{MGr}, \star, 1 ) $ are free algebras (their product structure is the same).
This is something that cannot be shown directly in $\mathcal{A}(\mathtt{MGr}, \star, 1 ) $.
Indeed, a unique factorization theorem on this associative presheaf has not yet been found, and only small irreducible marked graphs can be constructed; see \cref{fig:starindec}.
It is worthwhile to observe that the fibers under the map $\Pi $ described in \cref{prob:fibers} are non-trivial, as an example of a non-trivial relation can be seen in \cref{fig:starindecrel}

\begin{figure}[h]
\centering
\includegraphics[scale=0.4]{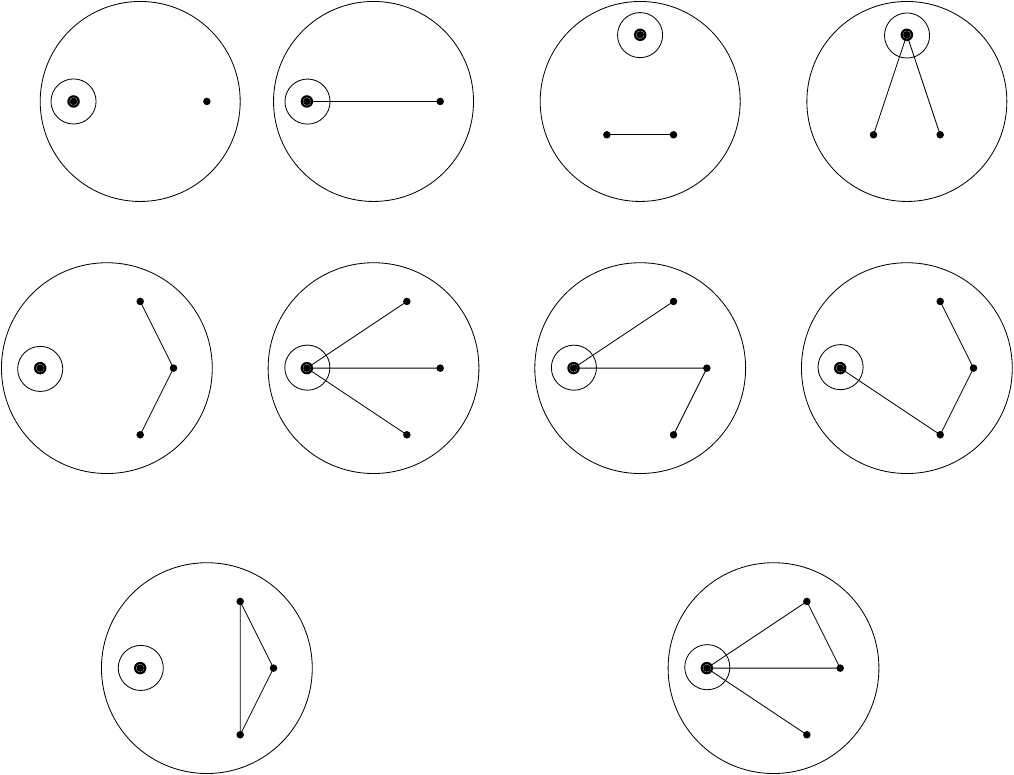}
\caption{The $\star$-irreducible marked graphs of size up to three.\label{fig:starindec}}
\end{figure}

\begin{figure}[h]
\centering
\includegraphics[scale=0.4]{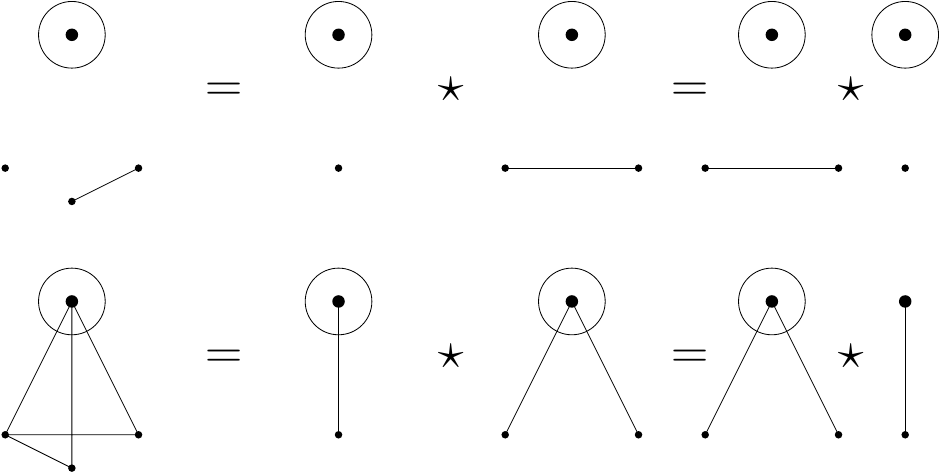}
\caption{The first relations between irreducible marked graphs over $\star $.\label{fig:starindecrel}}
\end{figure}


\subsection{Set partitions\label{sec:spartpatalg}}

We define here associative presheaves on set partitions $\mathtt{SPart}$ and set compositions $\mathtt{SComp}$, and show that $Sym $, the Hopf algebra of symmetric functions, is the pattern Hopf algebra on set partitions.
The functoriality of $\mathcal A $ also gives us a Hopf algebra morphism $Sym \to \mathcal A (\mathtt{SComp})$.

\begin{defin}[The presheaf on set partitions]
If $\boldsymbol{\pi }$ is a set partition of $I$ and $J \subseteq I$, then $\boldsymbol{\pi }|_J = \{I_1 \cap J, \dots , I_k \cap J\} $ is a set partition of $J$, after disregarding the empty sets.
This defines a presheaf structure $\mathtt{SPart}$ with $\mathtt{SPart}[I] = \Sigma_I$, the family of set partitions of $I$.

We further endow $\mathtt{SPart}$ with an associative structure $\uplus $ as follows: if $\boldsymbol{\pi } = \{I_1, \dots , I_q\}$, $\boldsymbol{\tau } = \{J_1, \dots , J_p\}$ are set partitions of the disjoint sets $I, J$, respectively, let $\boldsymbol{\pi } \uplus \boldsymbol{ \tau }  = \{I_1, \dots , I_q, J_1, \dots , J_p\}$ be a set partition of $I\sqcup J$.
It is straightforward to observe that $(\mathtt{SPart}, \uplus, \emptyset )$ is a commutative connected presheaf.
\end{defin}

Note that by \cref{thm:comutUFTfree}, the pattern Hopf algebra $\mathcal{A}(\mathtt{SPart})$ is free and the generators correspond to the irreducible elements of $\mathcal{G} (\mathtt{SPart} )$.
These correspond to the set partitions with only one block, and up to relabeling there is a unique such set partition of each size.
We write $\mathcal{I}(\mathtt{SPart}) = \{ \{[n]\} | n\geq 1\}$.

\begin{prop}
Let $\zeta : \mathcal{A} (\mathtt{SPart} ) \to Sym$  be the unique algebra morphism mapping  $\zeta : \pat_{ \{[n]\}} \mapsto p_n$, where $p_n$ is the power sum symmetric function.
This defines a Hopf algebra isomorphism.
\end{prop}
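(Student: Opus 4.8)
The plan is to exploit that both $\mathcal{A}(\mathtt{SPart})$ and $Sym$ are free commutative polynomial rings on a countable sequence of primitive generators, one in each positive degree, and that $\zeta$ matches these generators. First I would record the algebra isomorphism. By \cref{thm:comutUFTfree} applied to the connected commutative presheaf $(\mathtt{SPart}, \uplus, \emptyset)$, the pattern algebra is free commutative on $\{\pat_\iota \mid \iota \in \mathcal{I}(\mathtt{SPart})\}$, and we have already identified $\mathcal{I}(\mathtt{SPart}) = \{\{[n]\} \mid n \geq 1\}$; hence $\mathcal{A}(\mathtt{SPart})$ is a polynomial algebra in the variables $\pat_{\{[n]\}}$. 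On the other side, it is classical that over $\mathbb{Q}$ the power sums $p_n$ are algebraically independent and generate $Sym$, so $Sym$ is polynomial in the $p_n$. Consequently $\zeta$ is well defined by the universal property of the free commutative algebra, and it carries a free generating set bijectively onto a free generating set; therefore $\zeta$ is an algebra isomorphism.

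Next I would verify compatibility with the coalgebra structures. Since $\mathcal{A}(\mathtt{SPart})$ and $Sym$ are bialgebras, both $\Delta_{Sym} \circ \zeta$ and $(\zeta \otimes \zeta) \circ \Delta$ are algebra morphisms $\mathcal{A}(\mathtt{SPart}) \to Sym \otimes Sym$, so it suffices to check that they agree on the algebra generators $\pat_{\{[n]\}}$. The key computation is that $\pat_{\{[n]\}}$ is primitive: because $\{[n]\}$ is irreducible, \cref{prop:primit} gives $\Delta \pat_{\{[n]\}} = \pat_{\{[n]\}} \otimes 1 + 1 \otimes \pat_{\{[n]\}}$, where $1$ denotes the unit of $\mathcal{A}(\mathtt{SPart})$. (One sees this directly from the coproduct: a one-block partition admits only the trivial factorizations $\{[n]\} = \{[n]\} \cdot \emptyset = \emptyset \cdot \{[n]\}$ under $\cdot$, since the product $\uplus$ never merges blocks across its two arguments.) On the symmetric function side $p_n$ is likewise primitive, so both composites send $\pat_{\{[n]\}}$ to $p_n \otimes 1 + 1 \otimes p_n$. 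Compatibility with the counits is immediate, as $\zeta$ sends the unit to the unit and annihilates positive degree.

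Having established that $\zeta$ is a bijective bialgebra morphism, I would conclude it is a Hopf algebra isomorphism: both algebras are connected and graded, so the antipode is uniquely determined (via the Takeuchi formula invoked in \cref{thm:Hopffunctorp2}), and any bialgebra isomorphism between connected bialgebras automatically intertwines the antipodes. The only step carrying genuine content is the primitivity of $\pat_{\{[n]\}}$, and this is essentially already encoded in the identification of the irreducibles of $\mathtt{SPart}$; the point to keep in mind is that the freeness of $Sym$ on the power sums, and their primitivity, are facts that hold precisely because we work over $\mathbb{Q}$, so the statement is genuinely a rational, rather than integral, isomorphism.
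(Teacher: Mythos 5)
Your proposal is correct and follows essentially the same route as the paper: freeness on both sides sends a free generating set to a free generating set (hence an algebra isomorphism), primitivity of $\pat_{\{[n]\}}$ and $p_n$ gives compatibility with the coproduct, and uniqueness of the antipode upgrades the bialgebra isomorphism to a Hopf algebra isomorphism. Your version merely spells out a few details the paper leaves implicit (reduction to generators, the irreducibility of one-block partitions, the role of $\mathbb{Q}$).
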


\begin{proof}
As $\zeta $ sends a free basis to a free basis, it is an isomorphism of algebras.
Furthermore, we observe that both $\pat_{ \{[n]\}} $ and $p_n$ are primitive elements in their respective Hopf algebras, so the described map is a bialgebra morphism.
Because the antipode is unique, it must send the antipode of $\mathcal{A} (\mathtt{SPart} )$ to the one of $ Sym$.
Thus, this is a Hopf algebra isomorphism.
\end{proof}

\begin{defin}[The presheaf on set compositions]
Let $I$ be a finite set, and recall the definition of a set composition in \cref{defin:setcomp}.
If $J \subseteq I$ and $\opi  = ( I_1, \dots , I_k  ) $ is a set composition of $I$, then $\opi|_J = ( I_1 \cap J, \dots , I_k \cap J ) $ is a set composition of $J$, after disregarding the empty sets.
This defines a presheaf structure $\mathtt{SComp}$ with $\mathtt{SComp}[I] = \Pi_I$, the family of set compositions of $I$.

We further endow $\mathtt{SComp}$ with an associative structure $\uplus $ as follows: if $\opi = (I_1, \dots , I_q)$, $\otau = (J_1, \dots , J_p)$ are set partitions of the disjoint sets $I, J$, respectively, let $\opi \uplus \otau  = (I_1, \dots , I_q, J_1, \dots , J_p)$ be a set composition of $I\sqcup J$.

It is straightforward to observe that $(\mathtt{SComp}, \uplus, \emptyset )$ is an associative connected presheaf.
Further, we can also observe that the map $\boldsymbol{\lambda } :\mathtt{SComp} \Rightarrow \mathtt{SPart}$ is an associative presheaf morphism.
\end{defin}

From the map $\boldsymbol{\lambda } :\mathtt{SComp} \Rightarrow \mathtt{SPart} $ we get a Hopf algebra morphism 
$$ \mathcal{A} (\boldsymbol{\lambda } ) :  Sym \to \mathcal{A}(\mathtt{SComp})  \, . $$

Observe that $ \mathcal{A}(\mathtt{SComp} )$ is a free algebra, because it has a unique factorization theorem of the same type of permutations under the $\oplus $ product, so according to \cref{rem:vargasispowerful} the proof in \cite{vargas14} holds also in this associative presheaf.
This is also the case for the well known Hopf algebra $QSym$, where it was established that it is free in \cite{hazewinkel01}, and when we regard both Hopf algebras as filtered Hopf algebras, the enumeration of generators for each degree coincide with the number of Lyndon words of a given size.

\begin{conj}[The algebraic structure of $ \mathcal{A}(\mathtt{SComp})$]\label{conj:QSym}
The Hopf algebra $ \mathcal{A}(\mathtt{SComp} )$ isomorphic to $QSym$.
\end{conj}

\section{Hopf algebra structure on marked permutations\label{sec:mperfree}}

In this section we consider the algebra structure of $\mathcal{A}(\mathtt{MPer} )$, and show that this pattern algebra on marked permutations is freely generated.
This will be done using a factorization theorem on marked permutations on the inflation product.

Our strategy is as follows: we describe a unique factorization of marked permutations with the inflation product, in \cref{cor:simpleUFT}.
This unique factorization theorem describes all possible factorizations of a marked permutation into irreducibles.

We further consider the Lyndon words on the alphabet of irreducible marked permutations, as introduced in \cite{chen58}.
This leads us to a notion of stable Lyndon marked permutations $\mathcal{L}_{SL}$, in \cref{defin:centralLyndon}.
Finally, we prove the following result, which is the main theorem of this section, as a corollary of \cref{thm:triang}:

\begin{thm}\label{thm:freemperm}
The algebra $\mathcal{A}(\mathtt{MPerm})$ is freely generated by $\{\pat_{\iota^*} | \iota^*\in \mathcal{L}_{SL} \}$.
\end{thm}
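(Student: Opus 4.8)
The plan is to apply verbatim the three-step strategy of \cref{sec:strat}: produce an order $\preceq$ on $\mathcal{G}(\mathtt{MPer})$, a bijection $f$ between multisets of $\mathcal{L}_{SL}$ and $\mathcal{G}(\mathtt{MPer})$, and the triangularity relation \eqref{eq:product}. Once these are in place the theorem is a formal consequence: indexing the candidate generating family $\{\prod_{s\in S}\pat_s\}$ by multisets $S$, and the known basis $\{\pat_t\}_{t\in\mathcal{G}(\mathtt{MPer})}$ of \cref{rem:lipatfunc} by $\mathcal{G}(\mathtt{MPer})$, the assignment $S\mapsto f(S)$ turns the change of basis into a square matrix that is triangular for $\preceq$ with nonzero diagonal entries $c_{f(S),S}$, hence invertible. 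This upgrades the candidate family to a basis, which is exactly the assertion that the $\pat_{\iota^*}$ with $\iota^*\in\mathcal{L}_{SL}$ are free generators.

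For the bijection I would first invoke the unique factorization theorem \cref{cor:simpleUFT}: every coinvariant factors into $\star$-irreducibles, uniquely up to the transpositions permitted by \cref{thm:isomonoids}. This presents $(\mathcal{G}(\mathtt{MPer}),\cdot)$ as a partially commutative monoid on the alphabet $\mathcal{I}(\mathtt{MPer})$, in which two adjacent irreducible letters commute precisely when \cref{thm:isomonoids} allows it. On such a monoid the Chen--Fox--Lyndon machinery of \cite{chen58} applies once the notion of Lyndon word is adapted to the allowed commutations; the resulting canonical decompositions are the \emph{stable Lyndon} marked permutations of \cref{defin:centralLyndon}. Concretely, $f$ sends a multiset $S$ of elements of $\mathcal{L}_{SL}$ to the coinvariant obtained by arranging the corresponding Lyndon factors in the canonical non-increasing order and multiplying out all their irreducible letters under $\star$; that this is a bijection is the content of \cref{thm:Lyndonmperfact}.

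The analytic heart is the triangularity, i.e.\ \cref{thm:triang}. Taking $\preceq$ to refine the grading by size and then by the number of irreducible factors $j(\cdot)$ (a further refinement separating distinct leading terms may be required), I would expand $\prod_{s\in S}\pat_s$ by \cref{cor:shufflesandquasishuffles} as $\sum_c \binom{c}{\,\cdot\,}\pat_c$, where the multi-index records all the irreducible letters contributed by the factors $s\in S$. The plan is to show that among the coinvariants $c$ occurring with nonzero coefficient there is a unique $\preceq$-maximal one, namely the \emph{shuffle} realized by the full inflation product of all the letters in their canonical Lyndon order (the term of full size $\sum_{s}|s|$); that this coinvariant is precisely $f(S)$ and occurs with coefficient at least one; and that every other $c$ has strictly smaller size or strictly fewer factors, hence $c\prec f(S)$. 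This is where non-commutativity bites: because several orderings of the irreducible factors yield the same coinvariant, one must verify that it is exactly the canonical Lyndon ordering that realizes the top term and that $\preceq$ is compatible with the stable Lyndon factorization, so that distinct multisets $S$ produce distinct leading terms $f(S)$.

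I expect this last point to be the main obstacle. Controlling the maximal quasi-shuffle in the presence of the permitted transpositions — and checking that the leading coefficient $c_{f(S),S}$ is genuinely nonzero while every competing shuffle collapses to a strictly smaller coinvariant — is the delicate combinatorial step, and it is exactly what \cref{thm:triang} is set up to provide. With \cref{thm:triang} and the Lyndon bijection \cref{thm:Lyndonmperfact} in hand, the conclusion of \cref{thm:freemperm} follows immediately from the unitriangularity argument of the first paragraph.
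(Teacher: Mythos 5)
Your proposal is correct and follows essentially the same route as the paper: \cref{thm:freemperm} is deduced from the triangularity statement \cref{thm:triang} (with respect to the order refining size, then $j(\cdot)$, then $\leq_{fac}$) together with the bijection between multisets of SL marked permutations and $\mathcal{G}(\mathtt{MPer})$ supplied by \cref{thm:Lyndonmperfact}, exactly as in \cref{sec:MTheorems}. The only minor imprecision is that competing quasi-shuffles need not all have strictly smaller size or strictly fewer irreducible factors --- some are only strictly smaller in $\leq_{fac}$ --- but you flag this yourself and it is precisely what \cref{thm:triang} handles.
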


In the end of this section we compute the dimension of the space of primitive elements of the pattern Hopf algebra on marked permutations.
This, according to \cref{prop:primit}, can be done by enumerating the irreducible elements in $\mathcal{G}(\mathtt{MPer})$.

This section is organized as follows: we start in \cref{sec:UFT} and in \cref{sec:LFT} by establishing a unique factorization theorem in marked permutations.
In \cref{sec:MTheorems} we state and prove the main theorem of this section.
The proofs of technical lemmas used in these sections are left to \cref{sec:proofoflemmas,sec:UFTProof}.
Finally, in \cref{sec:PrimEl}, we enumerate the irreducible marked permutations.

\subsection{Unique factorizations\label{sec:UFT}}

We work on the combinatorial presheaf of permutations $(\mathtt{Per}, \oplus, \emptyset)$ and on the combinatorial presheaf of marked permutations $(\mathtt{MPer}, \star, \bar{1})$, introduced in \cref{defin:per,defin:mper}.

\begin{defin}[Decomposability on the operations $\oplus$ and $\ominus$]
We say that a permutation is $\oplus$-indecomposable if it has no non-trivial decomposition of the form $\tau_1\oplus \tau_2 $, and $\oplus$-decomposable otherwise.
We say that a marked permutation is $\oplus$-indecomposable if it has no decomposition of the form $\tau \oplus \pi^* $ or $\pi^*\oplus \tau$, where $\pi^*$ is a marked permutation and $\tau$ is a non-trivial permutation, and $\oplus$-decomposable otherwise.
Similar definitions hold for $\ominus $.
A permutation (resp. a marked permutation) is \textit{indecomposable} if it is both $\oplus$ and $\ominus$-indecomposable, and is \textit{decomposable} otherwise.
\end{defin}

We remark that a permutation is $\oplus$-indecomposable whenever it is irreducible on the associative presheaf $(\mathtt{Per}, \oplus, \emptyset)$ according to \cref{defin:indec}.
For marked permutations, \cref{defin:indec} specializes to the definition of irreducible marked permutations as follows:

\begin{defin}[Irreducible marked permutations]
A marked permutation $\pi^* $ is called \textit{irreducible} if any factorization $\pi^* = \tau^*_1 \star \tau_2^* $ has either $\tau^*_1 = \bar{1}$ or $\tau^*_2 = \bar{1}$.
\end{defin}

\begin{smpl}
Examples of irreducible marked permutations include $\bar{1}423, 23\bar{1}$ and $31\bar{4}2$; see \cref{fig:simplemps}.
These marked permutations are respectively an $\oplus$-decomposable, $\ominus$-decomposable and an indecomposable irreducible marked permutation.

\begin{figure}[h]
\centering
\includegraphics[scale=0.5]{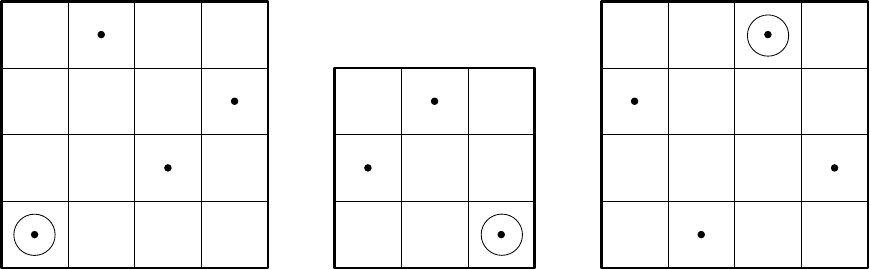}
\caption{Irreducible marked permutations\label{fig:simplemps}}
\end{figure}

\end{smpl}

\begin{rem}[Decomposable irreducible marked permutations]
If $\pi $ is an $\oplus$-indecomposable permutation, then $\bar{1}\oplus \pi $ and $\pi \oplus \bar{1} $ are irreducible marked permutations.
Similarly, if $\tau $ is an $\ominus$-indecomposable permutation, then $\bar{1}\ominus \tau $ and $\tau \ominus \bar{1} $ are irreducible marked permutations.

These are precisely the \textit{decomposable} (resp. $\oplus$-\textit{decomposable}, $\ominus$-\textit{decomposable}) \textit{irreducible marked permutations}.
\end{rem}

These decomposable irreducible marked permutations play an important role in the description of a free basis of $\mathcal{A}(\mathtt{MPer})$, because they are the only ones that get in the way of a unique factorization theorem for the inflation product.
In the following we carefully unravel all these issues.

\begin{rem}[$\oplus$-relations and $\ominus$-relations]\label{rem:oplusominusrelations}
Consider $\tau_1, \tau_2 $ $\oplus$-indecomposable permutations.
Then we have the following relations, called \textit{$\oplus$-relations}
$$ ( \bar{1}\oplus \tau_1 ) \star ( \tau_2 \oplus \bar{1} ) = ( \tau_2 \oplus \bar{1} ) \star ( \bar{1}\oplus \tau_1 ) =  \tau_2 \oplus \bar{1} \oplus \tau_1 \, . $$

Consider now $\pi_1, \pi_2 $ permutations that are $\ominus$-indecomposable.
Then we have the following relation, called \textit{$\ominus$-relations}
$$ ( \bar{1}\ominus \pi_1 ) \star ( \pi_2 \ominus \bar{1} ) = ( \pi_2 \ominus \bar{1} ) \star ( \bar{1}\ominus \pi_1 ) = \pi_2 \ominus \bar{1}\ominus \pi_1 \, . $$

We wish to establish in \cref{thm:isomonoids} that these generate all the inflation relations between irreducible marked permutations.
\end{rem}

We define the alphabet $\Omega : = \{\text{irreducible marked permutations} \}$, and consider the set $\mathcal{W}(\Omega ) $ of words on $\Omega$.
This set forms a monoid under the usual concatenation of words (we denote the concatenation of two words $w_1, w_2$ as $w_1 \cdot w_2$).
When $w\in \mathcal{W}(\Omega ) $, we write $w^*$ for the consecutive inflation of its letters.
So for instance $(\bar{1}2, \bar{2}1)^* = \bar{1}2 \star \bar{2}1 = \bar{2}13$.
We use the convention that the inflation of the empty word on $\Omega $ is $\bar{1}$. 
This defines the \textit{star map}, a morphism of monoids $ \star : \mathcal{W}(\Omega) \to \mathcal{G}(\mathtt{MPer})$.

For the sake of clarity, we avoid any ambiguity in the notation $a^*$ by using Greek letters for marked permutations, lowercase Latin letters to represent words on any alphabet, and upper case Latin letters to represent sets with an added marked element; see \cref{defin:mper}.

\begin{defin}[Monoidal equivalence relation on $\mathcal{W}(\Omega) $]
We now define an equivalence relation on $\mathcal{W}(\Omega) $.
For a word $w \in \mathcal{W}(\Omega ) $,  if $w = (\xi_1^* , \dots , \xi^*_k)$ is such that $\xi_i^* \star \xi_{i+1}^* = \xi_{i+1}^* \star \xi_i^* $ is an $\oplus$-relation or an $\ominus$-relation, we say that $w \sim (\xi_1^* , \dots \xi_{i-1}^*, \xi_{i+1}^*, \xi_i^*, \dots , \xi^*_k) $.

We further take the transitive and reflexive closure to obtain an equivalence relation on $\mathcal{W}(\Omega )  $.
\end{defin}

We trivially have that if $w_1 \sim w_2$ and $z_1 \sim z_2$, then $w_1\cdot z_1 \sim w_2\cdot z_2$.
This means that the set of equivalence classes $\mathcal{W}(\Omega)_{\sim}$ is a monoid.
Further, because of \cref{rem:oplusominusrelations}, the star map factors to a monoid morphism $ \mathcal{W}(\Omega)_{\sim } \to \mathcal{G}(\mathtt{MPer})$.

\begin{thm}\label{thm:isomonoids}
The star map $ \star : w \mapsto w^*$ defines an isomorphism from $ \mathcal{W}(\Omega) /_{ \sim }$ to the monoid of marked permutations with the inflation product.
\end{thm}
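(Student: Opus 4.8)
The plan is to establish that $\star$ induces a bijective morphism of monoids $\mathcal{W}(\Omega)/_{\sim}\to\mathcal{G}(\mathtt{MPer})$. That $\star$ is a well-defined monoid morphism on the quotient is already recorded above: concatenation is sent to inflation, and the $\oplus$- and $\ominus$-relations of \cref{rem:oplusominusrelations} hold in $\mathcal{G}(\mathtt{MPer})$. Surjectivity follows by induction on size, since any marked permutation distinct from $\bar{1}$ and not irreducible splits as a nontrivial inflation $\xi^{*}\star\eta^{*}$ of two strictly smaller factors, on which we recurse. The entire difficulty lies in injectivity: I must show that $w_{1}^{*}=w_{2}^{*}$ forces $w_{1}\sim w_{2}$.

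To handle injectivity I would recast factorizations geometrically. Call a subset $B$ of the ground set of a marked permutation $\pi^{*}$ a \emph{marked block} if it contains the marked element and is an interval in both the position and value orders. A factorization $\pi^{*}=\xi_{1}^{*}\star\cdots\star\xi_{k}^{*}$ then corresponds to a chain of marked blocks $\{*\}=B_{k}\subsetneq\cdots\subsetneq B_{0}=\pi^{*}$, where $\xi_{i}^{*}$ is obtained from $B_{i-1}$ by contracting $B_{i}$ to the marked point; here $\xi_{i}^{*}$ is irreducible exactly when $B_{i}$ is covered by $B_{i-1}$ in the poset of marked blocks, so factorizations into irreducibles are precisely the maximal chains. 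Injectivity thus amounts to showing that any two maximal chains of marked blocks are linked by a sequence of moves, each swapping a single intermediate block and each realizing an $\oplus$- or $\ominus$-relation. As a consistency check, \cref{cor:factsthm} already guarantees that the two chains carry the same multiset of irreducible letters.

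I would run the argument by induction on $|\pi^{*}|$, organized around the coatoms of the poset of marked blocks, i.e.\ the maximal proper marked blocks; note that a word representing $\pi^{*}$ has its first letter equal to $\pi^{*}/B$ for $B$ a coatom, so the possible first letters range exactly over $\{\pi^{*}/B : B \text{ a coatom}\}$. The key structural input, which I expect to be the main obstacle and which is the content deferred to the technical lemmas, is that two overlapping marked blocks (neither contained in the other) have their union and intersection again marked blocks, and that such an overlap forces a top-level $\oplus$- or $\ominus$-decomposition of $\pi^{*}$. From this one extracts the dichotomy: a reducible $\pi^{*}$ either has a unique coatom $B_{1}$, or is $\oplus$- (resp.\ $\ominus$-) decomposable with the mark in an interior summand and then has exactly two coatoms $B_{1},B_{1}'$, with $B_{1}\cup B_{1}'=\pi^{*}$ and $C:=B_{1}\cap B_{1}'$ a marked block covered by both, so that $C\subsetneq B_{1}\subsetneq\pi^{*}$ and $C\subsetneq B_{1}'\subsetneq\pi^{*}$ form a diamond.

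With this dichotomy the induction closes. If $B_{1}$ is the unique coatom, every word with $w^{*}=\pi^{*}$ begins with the forced irreducible $\pi^{*}/B_{1}$, and by the left-cancellation in \cref{rem:integerdomain} the remaining suffix represents the fixed block $B_{1}$; the induction hypothesis applied to $B_{1}$ makes all such suffixes $\sim$-equivalent, and since $\sim$ is a congruence we obtain $w_{1}\sim w_{2}$. In the two-coatom case the four layers of the diamond are, in the $\oplus$ case, $\pi^{*}/B_{1}=\bar{1}\oplus\gamma$ and $B_{1}/C=\delta\oplus\bar{1}$ along one side and $\pi^{*}/B_{1}'=\delta\oplus\bar{1}$ and $B_{1}'/C=\bar{1}\oplus\gamma$ along the other, with $\gamma,\delta$ the relevant $\oplus$-indecomposable permutations, and symmetrically for $\ominus$. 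Reading one maximal chain through $C$ gives a word beginning $(\bar{1}\oplus\gamma,\,\delta\oplus\bar{1})$ and the other a word beginning $(\delta\oplus\bar{1},\,\bar{1}\oplus\gamma)$ with identical remaining suffix, and these differ by exactly one $\oplus$-relation, hence are $\sim$-equivalent. Using the induction hypothesis inside $B_{1}$ and inside $B_{1}'$ to bring an arbitrary representative of $\pi^{*}$ onto one of these two canonical chains, and then the single relation to bridge the two sides, shows that all words representing $\pi^{*}$ are $\sim$-equivalent. This completes the induction, proving injectivity and hence that $\star$ is an isomorphism.
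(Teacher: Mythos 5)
Your proposal follows essentially the same route as the paper's proof: your ``marked blocks'' are exactly the paper's DC intervals, your chain--factorization correspondence is \cref{rem:simpleDCinterv}, and your coatom dichotomy (a unique maximal proper block unless the mark sits in an interior $\oplus$- or $\ominus$-summand, in which case exactly two coatoms form a diamond bridged by a single $\oplus$- or $\ominus$-relation) is precisely the content of \cref{lm:opcontinuity,lm:simplecont}, which is also where the paper places the technical weight. The only difference is organizational: the paper peels off all $q$ outer summands at once via \cref{cor:opcontinuity} and runs a minimal-counterexample argument instead of your one-diamond-at-a-time induction, but the underlying lemmas coincide.
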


We postpone the proof of \cref{thm:isomonoids} to \cref{sec:UFTProof}, and explore its consequences here.

Informally, this theorem states that any two factorizations of a marked permutation $\pi^* $ into irreducible marked permutations are related by $\sim $.
As a consequence, we recover the following corollary, which was already obtained in \cref{cor:factsthm} in a more general context: 

\begin{cor}\label{lm:jfacwdef}
Consider $\alpha^* $ a marked permutation, together with factorizations $\alpha^* = \xi_1^* \star \dots \star \xi_k^* = \rho_1^* \star \dots \star \rho_j^* $ into irreducible marked permutations.
Then $k= j$ and $ \{\xi_1^* ,   \dots ,  \xi_k^* \} = \{ \rho_1^* , \dots , \rho_j^* \}$ as multisets. 
\end{cor}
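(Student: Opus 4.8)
The plan is to deduce this statement directly from the isomorphism established in \cref{thm:isomonoids}, rather than re-proving it from scratch; indeed, as already noted, it is also a special case of the general \cref{cor:factsthm}. Given the two factorizations $\alpha^* = \xi_1^* \star \dots \star \xi_k^* = \rho_1^* \star \dots \star \rho_j^*$ into irreducible marked permutations, I would first package them as two words $w = (\xi_1^*, \dots, \xi_k^*)$ and $w' = (\rho_1^*, \dots, \rho_j^*)$ in $\mathcal{W}(\Omega)$. By construction both words have the same image under the star map, namely $w^* = \alpha^* = (w')^*$.

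Since \cref{thm:isomonoids} asserts that $\star$ descends to an \emph{isomorphism} $\mathcal{W}(\Omega)/_{\sim} \to \mathcal{G}(\mathtt{MPer})$, in particular it is injective on equivalence classes, so $w^* = (w')^*$ forces $w \sim w'$. The key remaining step is then purely combinatorial: to observe that the equivalence relation $\sim$ preserves both the length of a word and its multiset of letters. This holds because $\sim$ is generated by the elementary moves that transpose two adjacent letters $\xi_i^*, \xi_{i+1}^*$ whenever $\xi_i^* \star \xi_{i+1}^* = \xi_{i+1}^* \star \xi_i^*$ is an $\oplus$-relation or an $\ominus$-relation (see \cref{rem:oplusominusrelations}), and each such move changes neither the number of letters nor the multiset of letters that appear. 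Passing to the transitive and reflexive closure preserves these two invariants as well. Consequently $w \sim w'$ yields $k = j$ and $\{\xi_1^*, \dots, \xi_k^*\} = \{\rho_1^*, \dots, \rho_j^*\}$ as multisets, which is exactly the claim.

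I expect no substantial obstacle at this stage: the genuine content has already been absorbed into \cref{thm:isomonoids}, whose proof (deferred to a later section) requires actually classifying all inflation relations among irreducible marked permutations. Once that classification is in hand, the present corollary is a formal consequence of the fact that the generating moves of $\sim$ are letter transpositions. The only point that warrants any care is checking that these elementary moves truly only swap two adjacent letters, without merging, deleting, or creating letters; but this is immediate from the definition of $\sim$, since each $\oplus$- or $\ominus$-relation is an equality between the same two irreducible factors in swapped order.
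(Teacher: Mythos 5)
Your proposal is correct and matches the paper's own treatment: the paper derives this corollary exactly as an immediate consequence of \cref{thm:isomonoids} (the two factorization words must be $\sim$-equivalent, and $\sim$ is generated by adjacent transpositions, hence preserves length and the multiset of letters), while also noting, as you do, that it is a special case of the general \cref{cor:factsthm}.
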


As we did in \cref{sec:comutfree}, we define $j(\alpha^* ) $  to be the number of irreducible factors in any factorization of $\alpha^*$ into irreducible factors, and define $\mathfrak{fac}(\alpha^*) $ as the multiset of irreducible factors of $\alpha^*$ in $\mathcal{G}(\mathtt{MPer})$.
These are well defined as a consequence of \cref{lm:jfacwdef}.

\begin{defin}[Stability conditions]\label{defin:stab}
A factorization of a marked permutation $\alpha^* $ into irreducible marked permutations,
$$ \alpha^* = \xi_1^* \star \dots \star \xi_j^* \, , $$
or the corresponding word $(\xi_1^* , \dots , \xi_j^*)$ in $\mathcal{W}(\Omega)$, is 

\begin{itemize}
\item \textbf{$i$-$\oplus$-stable} if there are no $\pi, \tau$ $\oplus$-indecomposable permutations such that
$$ \xi_i^* = \bar{1}\oplus \pi \, \text{ and } \xi_{i+1}^* = \tau \oplus \bar{1} \, ;$$

\item \textbf{$i$-$\ominus$-stable} if there are no $\pi, \tau$ $\ominus$-indecomposable permutations such that
$$  \xi_i^* = \tau \ominus \bar{1}  \text{ and } \xi_{i+1}^* = \bar{1}\ominus \pi \, .$$
\end{itemize} 

Such a factorization or word is $\oplus$\textbf{-stable} (resp. $\ominus$\textbf{-stable}) if it is $i$-$\oplus$-stable (resp. $i$-$\ominus$-stable ) for any $i = 1, \dots , j-1$.
Finally, such a factorization or word is \textbf{stable} if it is both $\oplus$-stable and $\ominus$-stable.
\end{defin}


\begin{rem}[Stability reduction]\label{rem:stabred}
If we are given a factorization of $\alpha^*  = \xi_1^* \star \dots \star \xi_j^*$ that is not  $i$-$\oplus$-stable or $i$-$\ominus$-stable for some $i= 1, \dots , j-1$, we can perform an $i$\textit{-reduction}, that maps $(\xi_1^*, \dots ,  \xi_i^*, \xi_{i+1}^*, \dots \star \xi_j^*)$ to $ (\xi_1^*, \dots ,  \xi_{i+1}^*, \xi_i^*, \dots \star \xi_j^*)$.

It is immediate to see that this procedure of finding some $i$ and applying an $i$-reduction always terminates.
The final word is stable and is independent of the order in which we apply the reductions.
\end{rem}

Because of the above, any equivalence class in $\mathcal{W}(\Omega ) _{\sim} $ admits a unique stable word, and a consequence of \cref{thm:isomonoids} is the following.

\begin{cor}[Unique stable factorization]\label{cor:simpleUFT}
Let $\alpha^* $ be a marked permutation.
Then, $\alpha^*$ has a unique stable factorization into irreducible marked permutations.
We refer to it as \textit{the stable factorization of $\alpha^*$}.
\end{cor}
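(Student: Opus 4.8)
The plan is to deduce the corollary directly from \cref{thm:isomonoids} together with the reduction procedure of \cref{rem:stabred}. By \cref{thm:isomonoids}, the factorizations of $\alpha^*$ into irreducible marked permutations are in bijection with the words $w\in\mathcal{W}(\Omega)$ satisfying $w^* = \alpha^*$, and these words form exactly one equivalence class under the monoidal relation $\sim$. Thus the corollary is equivalent to the purely combinatorial statement that each $\sim$-equivalence class in $\mathcal{W}(\Omega)$ contains exactly one stable word, and the bulk of the work is to establish this.

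For existence, I would start from any word $w$ in the class (one exists since the star map is surjective onto $\mathcal{G}(\mathtt{MPer})$ by \cref{thm:isomonoids}) and apply the reduction process of \cref{rem:stabred}. Each $i$-reduction is an instance of an $\oplus$- or $\ominus$-swap, hence keeps us inside the same $\sim$-class, and by \cref{rem:stabred} the process terminates at a stable word $w'$. Then $(w')^* = w^* = \alpha^*$, so $w'$ is the required stable factorization.

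For uniqueness, I would view the reductions as a one-sided rewriting relation on $\mathcal{W}(\Omega)$ whose normal forms are exactly the stable words. \cref{rem:stabred} asserts that this relation is terminating and that every word has a unique normal form under it, that is, the final stable word is independent of the order of reductions. The key observation is that every single $\sim$-swap is either an $i$-reduction or the inverse of one, since an adjacent pair related by an $\oplus$- or $\ominus$-relation is non-stable in exactly one of its two orderings. Consequently, if a single reduction sends $w_1$ to $w_2$, then $w_1$ and $w_2$ admit the same normal form, as one may reduce $w_1$ by performing this step first; by symmetry a single $\sim$-swap preserves the normal form, and therefore so does the entire relation $\sim$. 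Hence all words in one $\sim$-class share a common normal form, and since a stable word is its own normal form, any two stable words in the class both equal that common normal form and thus coincide.

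Combining existence and uniqueness yields a unique stable word per $\sim$-class, which under \cref{thm:isomonoids} is exactly the unique stable factorization of $\alpha^*$. I expect the main obstacle to be the passage from the per-word content of \cref{rem:stabred}, namely that reductions from a fixed word converge, to the per-class statement that the whole $\sim$-class has a single stable representative; this is precisely the confluence bridge handled above, and it rests essentially on the dichotomy that each commuting adjacent pair is non-stable in a unique orientation.
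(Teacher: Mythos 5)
Your proof is correct and follows essentially the same route as the paper: \cref{thm:isomonoids} identifies factorizations of $\alpha^*$ with a single $\sim$-class in $\mathcal{W}(\Omega)$, and the reduction procedure of \cref{rem:stabred} yields a unique stable word in that class. You merely make explicit the confluence step (each $\sim$-swap is a reduction or its inverse, so the normal form is constant on $\sim$-classes) that the paper asserts without detail in \cref{rem:stabred}.
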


\subsection{Lyndon factorization on marked permutation\label{sec:LFT}}

We introduce an order on permutations, two orders on marked permutations and an order in $\mathcal{W}(\Omega)$.

\begin{defin}[Orders on marked permutations]\label{defin:orders}

The \textit{lexicographic order on permutations} is the lexicographic order when reading the one-line notation of permutations, and is written $\pi \leq_{per} \tau$.

Recall that, for a marked permutation $\alpha^* = (\leq_P, \leq_V)$ in $I$, we define its rank $\mathfrak{rk}(\alpha^* )$ as the rank of $*$ in $I \sqcup \{* \}$ with respect to the order $\leq_P$.
We also write $\alpha $ for referring to the corresponding permutation in the set $I\sqcup \{*\}$.
We define the \textit{lexicographic order on marked permutations}, also denoted $\leq_{per} $, as follows: we say that $\pi^* \leq_{per} \tau^* $ if $\pi <_{per} \tau $ or if $\pi = \tau $ and $\mathfrak{rk} (\pi^*)\leq \mathfrak{rk}(\tau^*)$.

This in particular endows our alphabet $\Omega $ of irreducible marked permutations with an order.
When we compare words on $\mathcal{W}(\Omega )$ we order them lexicographically according to $\leq_{per}$, and denote it simply as $\leq $.

We define the \textit{factorization order} $\leq_{fac}$ on marked permutations as follows:
Let $\pi^*= \xi_1^* \star \dots \star \xi_k^* $ and $\tau^* = \tau_1^* \star \dots \star \tau_j^* $ be the respective unique stable factorizations of $\pi^*$ and $\tau^*$.
Then, we say that $\pi^* \leq_{fac} \tau^* $ if $(\xi_1^*, \dots, \xi_k^*)\leq (\tau_1^*, \dots, \tau_j^*)$ in $\mathcal{W}(\Omega )$.
\end{defin}

\begin{smpl}
On permutations we have $12345 \leq_{per} 132 \leq_{per} 231 \leq_{per} 4123$.
Observe that the empty permutation is the smallest permutation.

On marked permutations, we have $1\bar{3}2 \leq_{per} 13\bar{2} \leq_{per} \bar{2}31 \leq_{per} 412\bar{3} $.
Observe that the trivial marked permutation $\bar{1} $ is the smallest marked permutation.

On words, we have the following examples: $(24\bar{1}3, 31\bar{4}2) \leq (31\bar{4}2, 24\bar{1}3) $, $ (\bar{1}32 , 21\bar{3} ) \leq (\bar{1}432) $ and  $(2\bar{4}13, \bar{1}423, 2\bar{4}13) \leq (2\bar{4}13, 2\bar{4}13, \bar{1}423)$.

On marked permutations, because $(24\bar{1}3, 31\bar{4}2)$ and $(31\bar{4}2, 24\bar{1}3) $ are stable, we have that $ 24\bar{1}3 \star 31\bar{4}2 \leq_{fac} 31\bar{4}2 \star 24\bar{1}3 $.
Notice however, that $(\bar{1}32 , 21\bar{3} ) $ is not a stable word, as it does not satisfy the $1$-$\oplus $-stability condition.
Instead, we have $(\bar{1}32 , 21\bar{3} )^* = 21\bar{3} \star \bar{1}32 \geq_{fac} \bar{1}432$.

Sometimes the orders $\leq_{per}$ and $\leq_{fac}$ on marked permutations do not agree, as exemplified in \cref{fig:orderfig}.

\begin{figure}[h]
\centering
\includegraphics[scale=0.60]{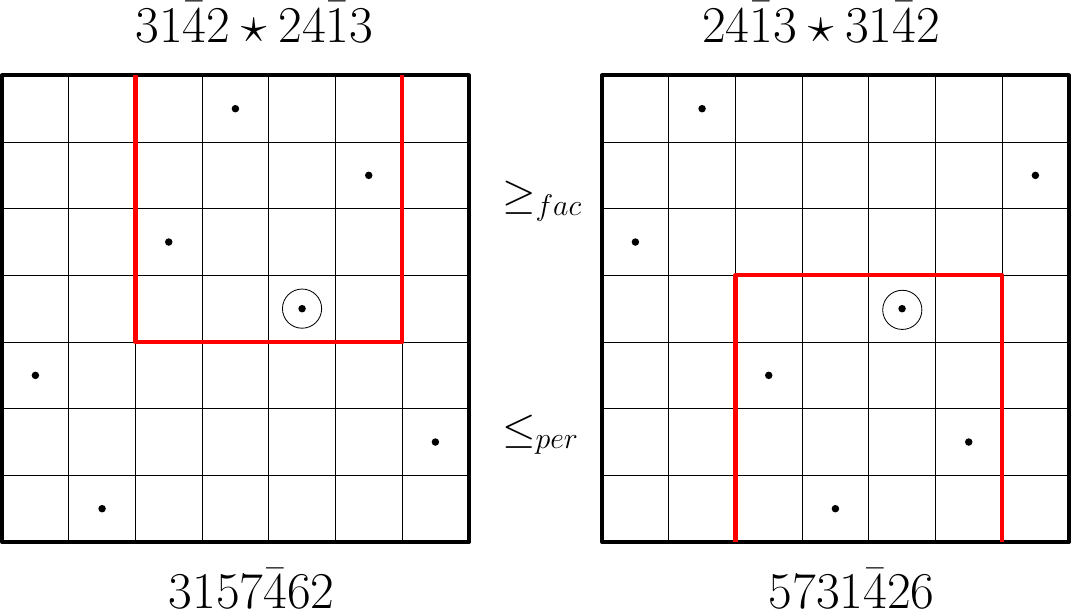}
\caption{Two marked permutations and their order relations.\label{fig:orderfig}}
\end{figure}

\end{smpl}

\begin{rem}\label{rem:goodorder}
If $w_1 \geq w_2$ are stable words in $\mathcal{W}(\Omega) $, then these correspond precisely to the stable irreducible factorizations of $w_1^*, w_2^*$.
Thus, $w_1^* \geq_{fac} w_2^*$.
\end{rem}

\begin{rem}\label{rem:smallestfact}
If a word $w = (\rho_1^*, \dots , \rho_k^*)$ in $\mathcal{W}(\Omega )$ is not $i$-$\oplus$-stable or $i$-$\ominus$-stable, then $\rho_i^* <_{per}\rho_{i+1}^*$.
Thus, from \cref{thm:isomonoids,rem:stabred}, for a fixed marked permutation $\alpha^*$, among all words $w$ on irreducible marked permutations such that $w^* = \alpha^* $, the stable factorization is the biggest one in the lexicographical order.
\end{rem}

We start the discussion on the topic of Lyndon words.
This is useful because the unique factorization theorem obtained in \cref{cor:simpleUFT} for marked permutations is not enough to establish the freeness of $\mathcal{A}(\mathtt{MPer})$ and, as in \cite{vargas14}, Lyndon words are the tool that allows us to describe an improved unique factorization theorem in \cref{thm:Lyndonmperfact}.

\begin{defin}[Lyndon words]
Given an alphabet $A$ with a total order, a word $l \in \mathcal{W}(A) $ is a \textit{Lyndon word} if, for any concatenation factorization $l = a_1 \cdot a_2$ into non-empty words, we have that $a_2\geq l $.
\end{defin}

\begin{smpl}[Examples of Lyndon words]
Consider the (usual) alphabet 
$$\Omega = \{\bar{1} <_{per } \bar{1}2 <_{per }  \bar{1}32 <_{per } \dots  <_{per } 23\bar{1} <_{per } 24\bar{1}3 <_{per } \dots  \} \, ,$$
then $(\bar{1}2, \bar{1}32, \bar{1}2 , 24\bar{1}3)$ is a Lyndon word in this alphabet.
Meanwhile, $(\bar{1}, \bar{1})$ is \textbf{not} a Lyndon word.
\end{smpl}

\begin{defin}[Stable Lyndon marked permutations]\label{defin:centralLyndon}
A word on irreducible marked permutations $w = (\xi_1^*, \dots, \xi_j^*) \in \mathcal{W}(\Omega ) $ is called \textit{stable Lyndon}, or SL for short, if it is a Lyndon word and satisfies the \textit{stability} conditions introduced in \cref{defin:stab}.

A marked permutation $\pi^*$ is called \textit{stable Lyndon}, or SL for short, if there exists an SL word $l =(\xi_1^*, \dots, \xi_j^*)$ such that $l^* = \pi^*$.
We write $\mathcal{L}_{SL} $ for the set of SL marked permutations.
Observe that, from \cref{cor:simpleUFT}, if such an SL word exists it is unique.
\end{defin}

We see latter in \cref{thm:triang} that $\mathcal{L}_{SL} $ is precisely the set that indexes a free basis of $\mathcal{A}(\mathtt{MPer})$.
To establish the unique factorization theorem in the context of marked permutations, we first recall a classical fact in Lyndon words.

\begin{thm}[Unique Lyndon factorization theorem, \cite{chen58}]\label{thm:Lyndonfact}
Consider a finite alphabet $A$ with a total order.
Then any word has a unique factorization, in the concatenation product, into Lyndon words $l_1, \dots  ,l_k$ such that $l_1 \geq \dots \geq l_k$ for the lexicographical order in $\mathcal{W}(A)$.
\end{thm}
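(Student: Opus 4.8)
The statement is the classical Chen--Fox--Lyndon theorem, and since it is cited from \cite{chen58} I will only sketch how one reproves it from the suffix characterization already built into the definition. Observe first that the defining condition ``$a_2 \geq l$ for every factorization $l = a_1\cdot a_2$ into nonempty words'' is automatically strict: a nonempty proper suffix $a_2$ has $|a_2| < |l|$, hence $a_2 \neq l$, so a Lyndon word is precisely a word that is \emph{strictly} smaller than each of its nonempty proper suffixes. This is the only property of Lyndon words I will use.

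The plan is to first establish the \textbf{product lemma}: if $u, v$ are Lyndon words with $u < v$, then $uv$ is Lyndon and $u < uv < v$. The step $u < uv$ is clear since $u$ is a proper prefix of $uv$. For $uv < v$ one splits on whether $u$ is a prefix of $v$: if $v = ux$ with $x$ nonempty, then $v < x$ because $v$ is Lyndon, so cancelling the common prefix $u$ gives $uv < ux = v$; otherwise the first disagreement of $u$ and $v$ already forces $uv < v$. Finally one compares $uv$ with each of its proper suffixes: a suffix $v'$ of $v$ satisfies $uv < v \leq v'$, while a suffix $u'v$ with $u'$ a nonempty proper suffix of $u$ is handled using $u < u'$ together with $|u'| < |u|$ (so $u$ cannot be a prefix of $u'$, and the first disagreement of $u, u'$ decides $uv < u'v$). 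With this lemma, \textbf{existence} is immediate: each single letter is (vacuously) Lyndon, so $w$ admits a factorization into Lyndon words; whenever two adjacent factors satisfy $l_i < l_{i+1}$, the product lemma lets us merge them into the single Lyndon word $l_il_{i+1}$, lowering the number of factors. The process terminates at a non-increasing factorization $l_1 \geq \dots \geq l_k$.

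The main obstacle is \textbf{uniqueness}, which I would prove by characterizing the last factor intrinsically and inducting on $|w|$ (the base case being the empty word, whose factorization is empty). The claim is that in any non-increasing Lyndon factorization $w = l_1 \cdots l_k$, the word $l_k$ equals the lexicographically smallest suffix of $w$. Granting this, $l_k$ is determined by $w$ alone; and since the suffixes of $w$ all have distinct lengths, hence are pairwise distinct words, the smallest suffix is a single well-defined suffix, so $|l_k|$ is determined. Stripping off $l_k$ leaves $w' = l_1 \cdots l_{k-1}$, which by induction has a unique non-increasing Lyndon factorization, giving uniqueness for $w$. To prove the characterization: every proper suffix of $l_k$ exceeds $l_k$ since $l_k$ is Lyndon; and any suffix beginning strictly before $l_k$ has the form $\sigma = l_i'\, l_{i+1}\cdots l_k$ with $l_i'$ a suffix of $l_i$, where $l_i' \geq l_i \geq l_k$ (using the Lyndon property of $l_i$ and $l_i \geq l_k$ from the non-increasing order). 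The delicate point, where I expect to spend the most care, is the short case analysis on prefix relations that upgrades $l_i' \geq l_k$ to $\sigma > l_k$: if $l_k$ is a prefix of $l_i'$ then $\sigma$ is a proper extension of $l_k$ and hence larger, while the subcase where $l_i'$ is a proper prefix of $l_k$ is excluded by $l_i' \geq l_k$, and otherwise the first disagreement of $l_i'$ and $l_k$ already forces $\sigma > l_k$. This yields that $l_k$ is the unique smallest suffix, completing the induction.
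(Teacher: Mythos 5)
Your proof is correct, and it is the standard argument for the Chen--Fox--Lyndon theorem (merge adjacent ascending factors for existence; characterize the last factor as the lexicographically smallest nonempty suffix for uniqueness). The paper itself gives no proof of \cref{thm:Lyndonfact} --- it is stated with a citation to \cite{chen58} --- so there is nothing internal to compare against; your write-up would serve as a self-contained replacement for the citation. All the delicate points check out: the strictness of $a_2 > l$ for proper suffixes, the three-way prefix case analysis in the product lemma, and the upgrade from $l_i' \geq l_k$ to $\sigma > l_k$ (where the subcase ``$l_i'$ is a proper prefix of $l_k$'' is correctly excluded, and the subcase ``$l_k$ is a prefix of $l_i'$'' uses that $\sigma$ is a strictly longer extension of $l_k$ because $i < k$). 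The only point worth making explicit is a definitional one: as literally stated, the paper's \cref{defin:centralLyndon}-adjacent definition of Lyndon word vacuously admits the empty word, under which uniqueness would fail trivially (one could pad any factorization with empty factors); your argument implicitly, and correctly, restricts to nonempty Lyndon words, and you should say so when fixing the base case of the induction.
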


This theorem is central in establishing the freeness of the shuffle algebra in \cite{radford79}.
So is the unique factorization into Lyndon marked permutations below, in establishing the freeness of $\mathcal{A}(\mathtt{MPer})$.

\begin{thm}[Unique stable Lyndon factorization theorem]\label{thm:Lyndonmperfact}
Let $\alpha^* $ be a marked permutation.
Then there is a unique sequence of SL words on $\Omega$, $l_1, \dots, l_k $ such that $l_i \geq l_{i+1}$ and $ \alpha^* = l_1^* \star \dots \star l_k^* $.
\end{thm}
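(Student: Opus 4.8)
The statement I want to prove (Theorem~\ref{thm:Lyndonmperfact}) asserts a unique factorization of any marked permutation $\alpha^*$ as $\alpha^* = l_1^* \star \dots \star l_k^*$ where $l_1 \geq \dots \geq l_k$ are SL words. My plan is to derive this by composing two factorization results already established: the unique stable factorization of $\alpha^*$ into irreducibles (\cref{cor:simpleUFT}), and the classical unique Lyndon factorization of words (\cref{thm:Lyndonfact}). The key observation is that an SL word is precisely a word that is both Lyndon \emph{and} stable, so I need to match up the ``stable'' bookkeeping with the ``Lyndon'' bookkeeping so that the two notions cooperate rather than interfere.

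\textbf{Existence.} First I would take the unique stable factorization of $\alpha^*$ guaranteed by \cref{cor:simpleUFT}, giving a stable word $w = (\xi_1^*, \dots, \xi_n^*)$ with $w^* = \alpha^*$. Next I apply \cref{thm:Lyndonfact} to $w$ as a word in $\mathcal{W}(\Omega)$, obtaining the unique concatenation factorization $w = l_1 \cdot l_2 \cdots l_k$ into Lyndon words with $l_1 \geq \dots \geq l_k$. Since $\star$ is a monoid morphism on $\mathcal{W}(\Omega)/_{\sim}$ (the star map), we immediately get $\alpha^* = l_1^* \star \dots \star l_k^*$. The crucial point to check here is that each $l_i$ is not merely Lyndon but also \emph{stable}: since $w$ is stable, every adjacent pair $(\xi_m^*, \xi_{m+1}^*)$ in $w$ satisfies the stability condition, and each $l_i$ is a contiguous subword of $w$, so its internal adjacent pairs are inherited from $w$ and remain stable. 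Hence each $l_i$ is SL, giving existence.

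\textbf{Uniqueness.} Suppose $\alpha^* = m_1^* \star \dots \star m_j^*$ is another such decomposition with SL words $m_1 \geq \dots \geq m_j$. I would form the concatenation $v = m_1 \cdot m_2 \cdots m_j \in \mathcal{W}(\Omega)$. Because each $m_i$ is stable, and because $m_i \geq m_{i+1}$ forces the junction between $m_i$ and $m_{i+1}$ to be stable as well — this is where \cref{rem:smallestfact} enters, since a junction that is \emph{not} stable would require $\rho^* <_{per} \rho'^*$ across the boundary, contradicting the ordering $m_i \geq m_{i+1}$ of the whole Lyndon words — the word $v$ is globally stable. By \cref{thm:isomonoids}, $v$ and $w$ represent the same class in $\mathcal{W}(\Omega)/_{\sim}$, and since each equivalence class has a \emph{unique} stable representative (\cref{rem:stabred}), we get $v = w$ as words. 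Now both $(l_1, \dots, l_k)$ and $(m_1, \dots, m_j)$ are decreasing Lyndon factorizations of the same word $w = v$, so the uniqueness half of \cref{thm:Lyndonfact} forces $k = j$ and $l_i = m_i$ for all $i$, completing uniqueness.

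\textbf{Main obstacle.} The delicate step, and the one I would write most carefully, is the claim in the uniqueness argument that the junctions between consecutive decreasing Lyndon words $m_i \geq m_{i+1}$ are automatically stable. Instability of a junction $(\rho^*, \rho'^*)$ means $\rho^*$ and $\rho'^*$ are decomposable irreducibles forming an $\oplus$- or $\ominus$-relation, which by \cref{rem:smallestfact} forces $\rho^* <_{per} \rho'^*$. I need to rule this out from $m_i \geq m_{i+1}$: the last letter $\rho^*$ of $m_i$ and the first letter $\rho'^*$ of $m_{i+1}$ cannot satisfy $\rho^* <_{per} \rho'^*$ strictly while respecting both the Lyndon property of $m_i$ (which controls its suffixes) and the lexicographic comparison $m_i \geq m_{i+1}$. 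Pinning down this comparison-of-boundaries argument precisely — rather than hand-waving that ``decreasing Lyndon words concatenate to a stable word'' — is the technical heart of the proof and the place where I expect to spend the most effort.
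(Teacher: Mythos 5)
Your proposal is correct and follows essentially the same route as the paper: existence by applying the Lyndon factorization theorem to the unique stable factorization and noting that contiguous subwords of a stable word are stable, and uniqueness by showing the concatenation $m_1\cdots m_j$ is itself stable (hence equals the stable factorization) and then invoking uniqueness of the Lyndon factorization. The junction-stability step you flag as the delicate point is resolved in the paper exactly as you sketch: instability at a junction would force the last letter of $m_i$ to be $<_{per}$ the first letter of $m_{i+1}$, which is ruled out by combining the Lyndon property of $m_i$ (its last letter is $\geq_{per}$ its first letter) with $m_i \geq m_{i+1}$.
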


We refer to such a sequence of words as the \textit{SL factorization} of $\alpha^*$.

\begin{proof}
The existence follows from \cref{cor:simpleUFT} and \cref{thm:Lyndonfact}.
Indeed, for any marked permutation $\alpha^*$, there is a unique stable factorization $\xi_1^*, \dots , \xi_{j(\alpha^*)}^*$, and from the Lyndon factorization theorem, the word $(\xi_1^*, \dots , \xi_{j(\alpha^*)}^*)\in \mathcal{W}(\Omega)$ admits a factorization into Lyndon words $l_1, \dots , l_k $ such that $l_i \geq l_{i+1}$.
These words are stable because $(\xi_1^*, \dots , \xi_{j(\alpha^*)}^*)$ is stable, and from $(\xi_1^*, \dots , \xi_{j(\alpha^*)}^*) = l_1 \cdots l_k$ we have that $\alpha^* = l_1^* \star \dots \star l_k^*$.
We then obtain the desired sequence of SL words $l_1, \dots , l_k$.

For the uniqueness of such a factorization, suppose we have SL words $m_1 \geq \dots \geq m_{k'} $ that form an SL factorization of $\alpha^*$.
We wish to show that this is precisely the sequence $l_1, \dots , l_k$ constructed above.
Write $m_k = (\rho_{k, 1}^*, \dots , \rho_{k, z_k}^*) $ for $v=1, \dots, k'-1$, where $z_v = |m_v|$ and, for readability purposes, consider as well the re-indexing $m_1 \cdots m_{k'} = (\rho_1^*, \dots , \rho_z^*) $.

First observe that from $\alpha^* = m_1^* \star \dots \star m_{k'}^*$ we get that $\rho_1^* \star \dots \star \rho_z^* $ is a factorization of $\alpha^*$ into irreducibles.
Further, because each $m_j$ is stable, the $i$-$\oplus$-stability and $i$-$\ominus$-stability of this factorization is given for any $i  $ that is not of the form $z_1 + \dots + z_v$, for some $v=1, \dots, k'-1$.
On the other hand, it follows from the Lyndon property that $\rho^*_{k, z_k} >_{per} \rho^*_{k, 1}$; further, because  $m_k \geq m_{k+1} $, we have that $ \rho^*_{k, 1}\geq_{per} \rho^*_{k+1, 1} $.
We conclude that $\rho^*_{k, z_k} >_{per} \rho^*_{k+1, 1}$. Comparing with \cref{rem:smallestfact}, we have the $i$-$\oplus$-stability and $i$-$\ominus$-stability condition for any $i  $ that is of the form $z_1 + \dots + z_v$, for some $v=1, \dots, k'-1$.

Thus, $\rho_1^* \star \dots \star \rho_z^* $ is the stable factorization of $\alpha^*$, so that $(\rho_1^*,  \dots  , \rho_z^* ) = (\xi_1^*, \dots , \xi^*_k ) $.
Further, the sequence $m_1\geq \dots \geq m_{k'} $ is the Lyndon factorization of $(\rho_1^*,  \dots  , \rho_z^* )$, so $(m_1, \dots , m_{k'}) = (l_1, \dots , l_k)$ by the uniqueness in \cref{thm:Lyndonfact}.
\end{proof}

We define $k(\alpha^* )$ to be the number of factors in the stable Lyndon factorization of $\alpha^*$.
Note that for any marked permutation $\alpha^* $, $k(\alpha^* ) \leq j(\alpha^* )$ where we recall that $j(\alpha^* ) $ is the number of irreducible factors in a factorization of $\alpha^*$ into irreducible marked permutations.

\begin{defin}[Word shuffle]
Consider $\Omega$ an alphabet, and let $w, l_1, \dots , l_k\in \mathcal{W}(\Omega)$.
We say that $w=(w_1, \dots , w_j)$ is a \textit{word shuffle} of $ l_1, \dots , l_k$ if $[j]$ can be partitioned into $k$ many disjoint sets $\{q_1^{(i)} <  \dots < q^{(i)}_{|l_i|} \}$, where $i$ runs over $i=1, \dots , k$, such that 
$$ l_i = (w_{q_1^{(i)}}, \dots , w_{q_{|l_i|}^{(i)}} ) \, , $$
for all $i=1 , \dots , k$.
\end{defin}

The following theorem is proven in \cite[Theorem 2.2.2]{radford79} and is the main property of Lyndon factorizations on words that we wish to use here.
This is also the main ingredient in showing that the shuffle algebra is free.

\begin{thm}[Lyndon shuffles - \cite{radford79}]\label{thm:lyndonshuf}
Take $\Omega$ an ordered alphabet, and $l_1\geq \dots \geq l_k$ Lyndon words on $\mathcal{W}(\Omega)$.
Consider $w\in \mathcal{W}(\Omega )$, and assume that $w$ is a word shuffle of $l_1, \dots , l_k$.
Then $w \leq l_1 \cdots l_k$.
\end{thm}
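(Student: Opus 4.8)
The plan is to prove the stronger statement that the concatenation $L := l_1 \cdot l_2 \cdots l_k$ is the lexicographically \emph{largest} among all word shuffles of $l_1, \dots, l_k$; the claimed inequality $w \leq L$ is then immediate. I would approach this through the greedy procedure that builds the largest shuffle one letter at a time: at each stage, among the nonempty remaining suffixes of the inputs, append the first letter of the suffix that is lexicographically largest, breaking ties arbitrarily. A standard greedy-exchange argument shows this produces the lexicographically maximal shuffle, the point being that at the first position where two candidate shuffles differ, the comparison is decided exactly by which remaining suffix was largest at that step.

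The core of the proof is to show that for Lyndon inputs sorted as $l_1 \geq \dots \geq l_k$ this procedure outputs precisely $L$. The key ingredient is the property of Lyndon words extracted from the definition given above: every proper suffix $s$ of a Lyndon word $l$ satisfies $s > l$ strictly, since writing $l = a_1 \cdot s$ the definition gives $s \geq l$, while $s \neq l$ for length reasons. Now the lexicographically largest of the full inputs is $l_1$, so greedy begins consuming $l_1$. I would then show by induction on the number of letters already removed from $l_1$ that greedy continues to consume $l_1$ until it is exhausted: after a proper prefix of $l_1$ has been removed, the remaining suffix $s$ of $l_1$ satisfies $s > l_1 \geq l_j$ for every still-untouched $l_j$, and $s$ is the only partially-consumed suffix, so $s$ remains strictly the largest remaining suffix and greedy again takes from it. Once $l_1$ is exhausted, applying the same reasoning to $l_2 \geq \dots \geq l_k$ and inducting on $k$ yields the total output $l_1 \cdot l_2 \cdots l_k = L$.

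The main delicate point is the treatment of ties, that is repeated Lyndon words $l_i = l_{i+1}$, and this is exactly where the \emph{strict} suffix inequality $s > l$ is indispensable: when greedy has consumed a proper prefix of $l_1$ while a copy $l_j$ equal to $l_1$ is still untouched, it is the strictness $s > l_1 = l_j$ that forbids switching to $l_j$, so that each copy of a repeated Lyndon word is consumed completely and consecutively. The second step requiring care is the correctness of the greedy-exchange argument with three or more sources, where one must check that preferring the source with the lexicographically largest remaining suffix never decreases the result even when several suffixes share a common prefix; this reduces to comparing two candidate suffixes at their first point of difference. These are the two places I would write out in detail, the rest being the bookkeeping of the two nested inductions. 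This plan reconstructs \cite[Theorem 2.2.2]{radford79}.
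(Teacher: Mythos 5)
The paper does not prove this statement at all --- it is imported verbatim from Radford's paper --- so there is no internal proof to compare against and your proposal must stand on its own. Its architecture is sound, and the part that is actually specific to Lyndon words is handled correctly and completely: under the paper's definition every nonempty proper suffix $s$ of a Lyndon word $l$ satisfies $s \geq l$, and $s \neq l$ for length reasons, hence $s > l$ strictly; this is exactly what forces the greedy merge to exhaust $l_1$ before touching any $l_j$ with $j \geq 2$ (including copies equal to $l_1$), so that the greedy output is the concatenation $l_1 \cdots l_k$. Your two nested inductions there are fine.

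The step you defer --- that the rule ``advance the source whose remaining suffix is lexicographically largest'' produces the lexicographically maximal interleaving of $k$ words --- is where essentially all of the content of Radford's lemma lives, and the one-sentence justification you give is not yet an argument: at the step where greedy and a competitor diverge, the two sources being compared may have equal first letters even though their suffixes differ, so the two outputs do not differ at that position and nothing is ``decided'' there. The claim is true, but it needs the genuine exchange lemma: writing $M(\cdot)$ for the maximal shuffle, if $s_i \geq s_j$ share their first letter, then $M(\dots,\mathrm{tail}(s_i),\dots,s_j,\dots) \geq M(\dots,s_i,\dots,\mathrm{tail}(s_j),\dots)$. A clean route is to take any interleaving witnessing the right-hand side, freeze the positions occupied by the letters of the other $k-2$ sources, and re-interleave only within the positions occupied by $s_i$ and $\mathrm{tail}(s_j)$; this reduces the $k$-source claim to the two-word inequality $M(\mathrm{tail}(u),v) \geq M(u,\mathrm{tail}(v))$ for $u \geq v$ with a common first letter, which one then proves by induction on $|u|+|v|$. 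With that lemma written out your plan closes; without it the proposal is a correct skeleton whose hardest bone is still missing.
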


We remark that there are substantial notation differences: particularly, in \cite{radford79} a \textit{prime factorization} is what we call here the \textit{Lyndon factorization}.
Our notation follows \cite{grinberg14}.

\subsection{Freeness of the pattern algebra in marked permutations\label{sec:MTheorems}}

In this section we state the main steps of the proof of \cref{thm:freemperm}.
The proof of the technical lemmas is postponed to \cref{sec:proofoflemmas}.

We consider the set of SL marked permutations $\mathcal{L}_{SL}$, which play the role of free generators, and consider a multiset of SL marked permutations $\{\iota_1^* \geq_{fac} \dots \geq_{fac} \iota_k^* \}$ and the marked permutation $\alpha^* = \iota_1^* \star \dots \star \iota_k^* $.

Then, all the terms that occur on the right hand side of
\begin{equation}\label{eq:prodform}
\prod_{i=1}^{k} \pat_{\iota_i^*} = \sum_{\beta^* }\binom{\beta^*}{\iota_1^*,  \dots , \iota_k^*}\pat_{\beta^* } \, ,
\end{equation}
correspond to \textit{quasi-shuffles} of $\iota_1^*,  \dots , \iota_k^* $.
Below we establish that the marked permutation $\alpha^* = \iota_1^* \star \dots \star \iota_k^*$ is the biggest such marked permutation occurring on the right hand side of \eqref{eq:prodform}, with respect to a suitable total order related to $\leq_{fac}$.

\begin{thm}\label{thm:triang}
Let $\alpha^* $ be a marked permutation, and suppose that $\iota_1^*, \dots , \iota_k^* $ is its Lyndon factorization.
Then there are coefficients $c_{\beta^* } \geq 0$ such that
\begin{equation}\label{eq:triang}
\prod_{i=1}^k \pat_{\iota_i^*} = \sum_{|\beta^*| < |\alpha^* | } c_{\beta^* } \pat_{\beta^* } + \sum_{\substack{|\beta^*| = |\alpha^*| \\ j(\beta^*) < j(\alpha^* ) }} c_{\beta^* } \pat_{\beta^* } + \sum_{\substack{|\beta^*| = |\alpha^*| \\ j(\beta^*) = j(\alpha^* ) \\ \beta^* \leq_{fac} \alpha^* }} c_{\beta^* } \pat_{\beta^* } \, .
\end{equation}
Furthermore, $c_{\alpha^* } \geq 1$.
\end{thm}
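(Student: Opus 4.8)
The plan is to start from \cref{cor:shufflesandquasishuffles}, which already expresses the product as a non-negative combination of pattern functions: writing
\[
\prod_{i=1}^k \pat_{\iota_i^*} = \sum_{\beta^*} \binom{\beta^*}{\iota_1^*,\dots,\iota_k^*}\,\pat_{\beta^*},
\]
and setting $c_{\beta^*}=\binom{\beta^*}{\iota_1^*,\dots,\iota_k^*}$, the non-negativity $c_{\beta^*}\ge 0$ is automatic, so the entire task reduces to locating the support. If $c_{\beta^*}\neq 0$, there are subsets $J_1,\dots,J_k$ of the ground set $Y$ of $\beta^*$ with $\bigcup_i J_i=Y$ and $\beta^*|_{J_i}\sim\iota_i^*$; hence $|\beta^*|=|Y|\le\sum_i|J_i|=\sum_i|\iota_i^*|=|\alpha^*|$, which handles the first sum. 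For the leading coefficient I would iterate \cref{rem:inflqshuf} along $\alpha^*=\iota_1^*\star\cdots\star\iota_k^*$ (equivalently, use that restriction distributes over $\star$ by \cref{obs:naturality}, so $\alpha^*|_{I_i}=\iota_i^*$ on the defining blocks $I_i$), exhibiting $(I_1,\dots,I_k)$ as a genuine quasi-shuffle and giving $c_{\alpha^*}\ge 1$.

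Next I would assume $|\beta^*|=|\alpha^*|$, which forces the $J_i$ to be disjoint and to partition $Y$. Fix the stable factorization $\beta^*=s_1\star\cdots\star s_m$ into irreducibles, $m=j(\beta^*)$, with blocks $C_1,\dots,C_m$ partitioning $Y$ and $s_j=\beta^*|_{C_j}$. Since restriction distributes over $\star$, $\iota_i^*\sim\beta^*|_{J_i}=s_1|_{C_1\cap J_i}\star\cdots\star s_m|_{C_m\cap J_i}$; counting irreducible factors (additive over $\star$ and well defined by \cref{lm:jfacwdef}) yields $j(\iota_i^*)\ge\bigl|\{j:C_j\cap J_i\neq\emptyset\}\bigr|$. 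Summing over $i$ and using $j(\alpha^*)=\sum_i j(\iota_i^*)$,
\[
j(\alpha^*)=\sum_{i=1}^k j(\iota_i^*)\;\ge\;\sum_{i=1}^k\bigl|\{\,j:C_j\cap J_i\neq\emptyset\,\}\bigr|\;\ge\;m=j(\beta^*),
\]
the last step because every nonempty $C_j$ meets at least one $J_i$. This settles the second sum. Moreover, equality $j(\beta^*)=j(\alpha^*)$ forces both inequalities to be tight: each $C_j$ meets exactly one $J_i$, so $\{C_j\}$ refines $\{J_i\}$, and writing $S_i=\{j:C_j\subseteq J_i\}$ gives $J_i=\bigsqcup_{j\in S_i}C_j$ together with $\iota_i^*=v_i^*$, where $v_i=(s_j)_{j\in S_i}\in\mathcal{W}(\Omega)$ is the subword of $w_\beta:=(s_1,\dots,s_m)$ supported on block $i$.

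The remaining, and hardest, case is the third sum: under these equalities I must show $\beta^*\le_{fac}\alpha^*$, i.e. $w_\beta\le l_1\cdots l_k$, where $l_1\ge\cdots\ge l_k$ is the SL factorization of $\alpha^*$ (so $l_i^*=\iota_i^*$) and $w_\beta$ is the stable factorization word of $\beta^*$. The delicate point is that, although $v_i^*=\iota_i^*=l_i^*$ forces $v_i\sim l_i$ by \cref{thm:isomonoids}, the block-subword $v_i$ need \emph{not} equal $l_i$: a reducible adjacency inside block $i$ can be separated by letters of other blocks in the stable word $w_\beta$. Thus $w_\beta$ is a word shuffle of the $v_i$, not literally of the Lyndon words $l_i$, so \cref{thm:lyndonshuf} cannot be applied to $w_\beta$ directly; this is where I expect the real obstacle to lie.

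To bridge that gap I would argue as follows. Because $l_i$ is the stable representative of its $\sim$-class, it is lexicographically largest, so $v_i\le l_i$ for every $i$ by \cref{rem:smallestfact}. Form the word $u$ obtained from $w_\beta$ by overwriting, at the positions occupied by block $i$, the letters of $v_i$ by those of $l_i$ in their Lyndon order. Then $u$ is a genuine word shuffle of $l_1\ge\cdots\ge l_k$, whence $u\le l_1\cdots l_k$ by \cref{thm:lyndonshuf}. Finally, at the first position where $w_\beta$ and $u$ differ — which lies in some block $i_0$ and is exactly the first position where $v_{i_0}$ and $l_{i_0}$ differ — the inequality $v_{i_0}\le l_{i_0}$ makes the letter of $w_\beta$ strictly smaller, so $w_\beta\le u\le l_1\cdots l_k$ and therefore $\beta^*\le_{fac}\alpha^*$. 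The technical verifications that restriction of a shuffle decomposes as claimed, and that this refinement/first-difference comparison is valid, are the content I would isolate in the lemmas postponed to \cref{sec:proofoflemmas}.
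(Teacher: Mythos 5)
Your proposal is correct and follows essentially the same route as the paper: the size and factor-count bounds are the content of \cref{lm:sizeqs}, the block-refinement analysis in the equality case is \cref{lm:bbl}, and your final step of overwriting each block subword $v_i$ by its stable Lyndon representative $l_i$ and invoking \cref{thm:lyndonshuf} reproduces the paper's auxiliary word $(\tau_1^*,\dots,\tau_j^*)$, there packaged as a marked permutation $\gamma^*$ in \cref{lm:bbl,lm:bplol}. The only difference is organizational: you compare $w_\beta \le u \le l_1\cdots l_k$ directly at the level of words, which lets you bypass the paper's intermediate step of showing that the stable factorization of $\gamma^*$ is again a word shuffle of the $l_i$.
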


With this, the linear independence of all products of the form $\prod_{i=1}^{k} \pat_{\iota_i^*}$ follows from the linear independence of $\{\pat_{\alpha^* } | \alpha^* \in \mathcal{G}(\mathtt{MPer}) \}$, established earlier in \cref{rem:lipatfunc}.
In other terms, \cref{thm:triang} implies \cref{thm:freemperm}.
The technical lemmas necessary to prove \cref{thm:triang} are the following:

\begin{lm}\label{lm:sizeqs}
Let $\beta^*$ be a quasi-shuffle of the marked permutations $\iota_1^*, \dots , \iota_k^*$.
Then, $|\beta^* | \leq |\alpha^*|$.
Furthermore, if $|\beta^* | = |\alpha^*|$, then $j(\beta^*) \leq j(\alpha^*)$.
\end{lm}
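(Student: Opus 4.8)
The plan is to unwind the definition of quasi-shuffle and treat the two assertions separately, the size bound being immediate and the bound on the number of factors carrying all the content. By \cref{defin:qsands}, the hypothesis that $\beta^*$ is a quasi-shuffle of $\iota_1^*,\dots,\iota_k^*$ means $\binom{\beta^*}{\iota_1^*,\dots,\iota_k^*}>0$, so I can fix a representative $c\in\mathtt{MPer}[C]$ of $\beta^*$ together with subsets $J_1,\dots,J_k\subseteq C$ satisfying $\bigcup_i J_i=C$ and $c|_{J_i}\sim\iota_i^*$ for every $i$. Since $\alpha^*=\iota_1^*\star\dots\star\iota_k^*$ and inflation adds sizes, $|\alpha^*|=\sum_i|\iota_i^*|=\sum_i|J_i|$, whence $|\beta^*|=|C|=\bigl|\bigcup_i J_i\bigr|\le\sum_i|J_i|=|\alpha^*|$. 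This proves the first claim, and equality holds precisely when the covering $\bigcup_i J_i=C$ is a partition, i.e.\ the $J_i$ are pairwise disjoint.

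From now on I assume $|\beta^*|=|\alpha^*|$, so that $\{J_1,\dots,J_k\}$ is a set partition of $C$, and I aim to prove $j(\beta^*)\le j(\alpha^*)$. I first record that $j(\alpha^*)=\sum_i j(\iota_i^*)$: concatenating irreducible factorizations of the $\iota_i^*$ produces an irreducible factorization of $\alpha^*$, and $j$ does not depend on the factorization by \cref{lm:jfacwdef}. Hence it suffices to show $j(\beta^*)\le\sum_i j(\iota_i^*)$, and for this I exploit the nested geometry of the inflation product. Writing the irreducible factorization $c=s_1\star\dots\star s_m$ with $m=j(\beta^*)$, the support of each tail $s_{t+1}\star\dots\star s_m$ together with $*$ is a block of the diagram of $c$ that is contiguous in both the position and value orders (an \emph{interval}); these blocks form a strictly decreasing chain $C\sqcup\{*\}=D_0\supsetneq D_1\supsetneq\dots\supsetneq D_m=\{*\}$ of intervals each containing $*$.

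The final step is a counting argument across the partition $\{J_i\}$. For each $i$ I intersect the chain with $J_i\sqcup\{*\}$ to obtain a weakly decreasing chain of intervals of $c|_{J_i}\sim\iota_i^*$ running from $J_i\sqcup\{*\}$ down to $\{*\}$; letting $r_i$ be the number of strict inclusions it contains, the resulting chain of $r_i$ proper intervals containing $*$ yields a factorization of $\iota_i^*$ into $r_i$ nontrivial factors, so refining to irreducibles and invoking \cref{lm:jfacwdef} gives $r_i\le j(\iota_i^*)$. On the other hand, each step $D_{t-1}\supsetneq D_t$ discards some element of $C$, which lies in a unique block $J_i$ because the $J_i$ partition $C$, so every step is strict in at least one coordinate; summing over steps yields $m\le\sum_i r_i\le\sum_i j(\iota_i^*)=j(\alpha^*)$, as desired. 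I expect the main obstacle to be the diagrammatic bookkeeping underlying this paragraph: namely checking that the tails of an inflation factorization really are intervals, that restricting an interval to $J_i\sqcup\{*\}$ is again an interval of $c|_{J_i}$, and that a chain of nested intervals containing $*$ genuinely encodes a factorization in the inflation product. These are precisely the points where the argument leaves the general presheaf formalism and relies on the explicit structure of $\star$ on marked permutations.
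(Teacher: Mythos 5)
Your proposal is correct and follows essentially the same route as the paper: the size bound by subadditivity of the cover, then (in the equality case) intersecting the nested DC-interval chain of an irreducible factorization of $\beta^*$ with each block $J_i$ of the resulting partition, bounding the number of strict drops in coordinate $i$ by $j(\iota_i^*)$ via \cref{lm:jfacwdef}, and double-counting strict steps to get $j(\beta^*)\le\sum_i j(\iota_i^*)=j(\alpha^*)$. The technical points you flag at the end are exactly what the paper's \cref{rem:simpleDCinterv} and the naturality of $\star$ under restriction supply.
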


\begin{lm}[Factor breaking lemma]\label{lm:bbl}
Let $\beta^*$ be a quasi-shuffle of the marked permutations $\iota_1^*,  \dots , \iota_k^*$, such that $ |\beta^*| = |\alpha^*|$ and $ j(\beta^*) = j(\alpha^* ) $.
Then $\mathfrak{fac} (\beta^* )  = \mathfrak{fac} (\alpha^* ) $.

Furthermore, if for each $i=1, \dots , k$,  
$$ \iota_i^*= \zeta_{1, i}^* \star \dots \star \zeta_{j(\iota_i^*), i}^*$$
is the stable factorization of $\iota_i^*$, then there is a marked permutation $\gamma^*$ with a factorization into irreducibles given by $\gamma^* = \tau_1^* \star \dots \star \tau_{j(\gamma^*) }^*$ such that

\begin{itemize}
\item we have $\mathfrak{fac}(\gamma^* ) = \mathfrak{fac}(\beta^* )$. In particular, $ |\gamma^*| = |\beta^*| = |\alpha^*|$ and $j(\gamma^*) = j(\beta^*) = j(\alpha^* ) $;

\item we have $\gamma^* \geq_{fac} \beta^*$;

\item the word $(\tau_1^*, \dots ,\tau_{j(\gamma^*) }^*)$ is a word shuffle of the words $\{(\zeta_{1, i}^* , \dots , \zeta_{j(\iota_i^*), i}^*)\}_{i=1, \dots , k}$.
\end{itemize}

\end{lm}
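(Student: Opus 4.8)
The plan is to first upgrade the quasi-shuffle to a disjoint shuffle, then read off the factorization data of $\beta^*$ by a counting argument, and finally build $\gamma^*$ from the stable factorization words of the $\iota_i^*$, deducing $\gamma^*\ge_{fac}\beta^*$ through a chain of word inequalities fed into \cref{rem:goodorder}. Fix a representative $\beta^*\in\mathtt{MPer}[I]$. Since $\beta^*$ is a quasi-shuffle of $\iota_1^*,\dots,\iota_k^*$, there are sets $J_1,\dots,J_k$ with $\bigcup_i J_i=I$ and $\beta^*|_{J_i}\sim\iota_i^*$, so $|J_i|=|\iota_i^*|$ and hence $|I|\le\sum_i|\iota_i^*|=|\alpha^*|$. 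The hypothesis $|\beta^*|=|\alpha^*|$ forces equality, so (as in the argument behind \cref{lm:sizeqs}) the $J_i$ are pairwise disjoint and partition $I$.

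Next I analyse an arbitrary factorization $\beta^*=t_1^*\star\cdots\star t_m^*$ into irreducibles, where $m=j(\beta^*)=j(\alpha^*)=\sum_i j(\iota_i^*)$, the last equality obtained by concatenating the stable factorizations of the $\iota_i^*$ and invoking \cref{lm:jfacwdef}. Such a factorization corresponds to a nested chain $I=G_0\supseteq\cdots\supseteq G_m=\emptyset$ with $t_l^*=\beta^*|_{G_{l-1}\setminus G_l}$, and restricting through the naturality of $\star$ in \cref{obs:naturality} gives $\beta^*|_{J_i}=t_1^*|_{P_{1,i}}\star\cdots\star t_m^*|_{P_{m,i}}$, where $P_{l,i}:=J_i\cap(G_{l-1}\setminus G_l)$. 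Counting irreducible factors yields $\sum_i\sum_l j(t_l^*|_{P_{l,i}})=\sum_i j(\iota_i^*)=m$. Since each $t_l^*$ is nontrivial, $G_{l-1}\setminus G_l\ne\emptyset$ and some $P_{l,i}\ne\emptyset$ contributes at least $1$, so the double sum is at least $m$; equality then forces, for every $l$, a unique $i=\sigma(l)$ with $P_{l,i}\ne\emptyset$, this piece being all of $G_{l-1}\setminus G_l$ and $t_l^*|_{P_{l,i}}=t_l^*$. Hence $\{t_l^*:\sigma(l)=i\}$ is the multiset of irreducible factors of $\beta^*|_{J_i}\sim\iota_i^*$, so it equals $\mathfrak{fac}(\iota_i^*)$, and summing over $i$ gives $\mathfrak{fac}(\beta^*)=\biguplus_i\mathfrak{fac}(\iota_i^*)=\mathfrak{fac}(\alpha^*)$, the first assertion. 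Crucially, this counting applies to any irreducible factorization of $\beta^*$; applied to the stable factorization it produces a pattern $\sigma'$ whose group-$i$ subsequence $w_i$ is an irreducible factorization of $\iota_i^*$, whence $w_i\le v_i$ by \cref{rem:smallestfact}, where $v_i:=(\zeta_{1,i}^*,\dots,\zeta_{j(\iota_i^*),i}^*)$ is the stable factorization word of $\iota_i^*$. I write $\mathrm{SF}(\rho^*)$ for the stable factorization word of $\rho^*$ in general (\cref{cor:simpleUFT}), so that $v_i=\mathrm{SF}(\iota_i^*)$ and $w_i$ is a subsequence of $\mathrm{SF}(\beta^*)$.

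For the second claim I set $\gamma^*:=(\tau^{\max})^*$, where $\tau^{\max}$ is the lexicographically largest word shuffle of $v_1,\dots,v_k$; this gives the third bullet and $\mathfrak{fac}(\gamma^*)=\biguplus_i\mathfrak{fac}(\iota_i^*)=\mathfrak{fac}(\beta^*)$. To obtain $\gamma^*\ge_{fac}\beta^*$ I compare stable words. Let $\tau'$ be obtained from $\mathrm{SF}(\beta^*)$ by replacing, at the positions dictated by $\sigma'$, each group-$i$ subsequence $w_i$ by $v_i$ (legitimate since $|w_i|=|v_i|=j(\iota_i^*)$). Then $\tau'$ is a word shuffle of $v_1,\dots,v_k$, so $\tau'\le\tau^{\max}$; and a first-difference comparison gives $\tau'\ge\mathrm{SF}(\beta^*)$, because at the first position where they disagree the responsible subsequences $v_i$ and $w_i$ first disagree too, with $v_i\ge w_i$. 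Finally $\mathrm{SF}(\gamma^*)\ge\tau^{\max}$, since $\tau^{\max}$ is a word with image $\gamma^*$ and $\mathrm{SF}(\gamma^*)$ is the largest such by \cref{rem:smallestfact}. Chaining these, $\mathrm{SF}(\beta^*)\le\tau'\le\tau^{\max}\le\mathrm{SF}(\gamma^*)$; as the extreme terms are stable words, \cref{rem:goodorder} yields $\gamma^*\ge_{fac}\beta^*$, completing the second claim.

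The principal obstacle is the counting step together with the bookkeeping of the inflation nesting: I must verify that the chain pieces $G_{l-1}\setminus G_l$ interact cleanly with restriction to the partition $\{J_i\}$, that the marked point is handled correctly throughout (every restriction retains $*$), and—most importantly—that the grouping conclusion is robust enough to be reapplied to the stable factorization of $\beta^*$ rather than only to the auxiliary factorization used to define $\sigma$. Once the within-group inequalities $w_i\le v_i$ are secured via \cref{rem:smallestfact}, the first-difference comparison $\tau'\ge\mathrm{SF}(\beta^*)$ and the bound $\mathrm{SF}(\gamma^*)\ge\tau^{\max}$ are routine.
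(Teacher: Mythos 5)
Your proof is correct and takes essentially the same route as the paper: disjointness of the blocks from the size count, the observation that each layer of the DC chain of the stable factorization of $\beta^*$ must land in a single block (your count of irreducible subfactors is just a rephrasing of the paper's count of the singletons $U_p$), and then stabilization of the resulting subwords combined with \cref{rem:smallestfact} and \cref{rem:goodorder} to get $\gamma^* \geq_{fac} \beta^*$. The only cosmetic difference is that you take $\gamma^*$ to be the inflation of the lexicographically maximal word shuffle $\tau^{\max}$ of the stable words $v_1,\dots,v_k$, whereas the paper takes $\gamma^*$ to be the inflation of your intermediate word $\tau'$ (the in-place replacement of each $w_i$ by $v_i$); since your chain of inequalities passes through $\tau'$ anyway, the content is identical.
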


These lemmas will be proven below, in \cref{sec:proofoflemmas}.
We remark that, here, the chosen ordering $\leq_{per}$ on the irreducible marked permutations plays a role.
That is, in proving that $\gamma^* \geq_{fac} \beta^*$ we use properties of the order $\leq_{per}$ introduced above, like \cref{rem:smallestfact}.
This is unlike the work in \cite{vargas14}, where any order in the $\oplus$-indecomposable permutations gives rise to a set of free generators.

\begin{lm}[Factor preserving lemma]\label{lm:bplol}
Let $\gamma^*$ be a marked permutation with a factorization into irreducibles given by $\gamma^* = \tau_1^* \star \dots \star \tau_{j(\gamma^* )}^*$, and let $l_1, \dots , l_k$ be SL words, such that
\begin{itemize}
\item  $l_i \geq l_{i+1}$ for each $i=1, \dots , k-1$ and;

\item  The word $(\tau_1^*, \dots ,\tau_{j(\gamma^*) }^*)$ is a word shuffle of the words $\{ l_i\}_{i=1, \dots , k}$.
\end{itemize}

Then, $\gamma^* \leq_{fac} l_1^* \star  \dots \star l_k^*$.
\end{lm}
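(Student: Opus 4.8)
The plan is to translate the claim $\gamma^* \leq_{fac} l_1^* \star \dots \star l_k^*$ into an inequality of \emph{stable} words in $\mathcal{W}(\Omega)$ and then invoke the Lyndon shuffle bound of \cref{thm:lyndonshuf}. Write $w = (\tau_1^*, \dots, \tau_{j(\gamma^*)}^*)$, so that $w^* = \gamma^*$ and $w$ is a word shuffle of $l_1, \dots, l_k$ by hypothesis, and set $\delta^* := l_1^* \star \dots \star l_k^*$. Since $l_1 \geq \dots \geq l_k$ are SL words, the argument in the proof of \cref{thm:Lyndonmperfact} shows that the concatenation $l_1 \cdots l_k$ is precisely the stable factorization of $\delta^*$. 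By \cref{defin:orders}, the inequality $\gamma^* \leq_{fac} \delta^*$ is therefore equivalent to $\overline{w} \leq l_1 \cdots l_k$ in $\mathcal{W}(\Omega)$, where $\overline{w}$ denotes the unique stable word equivalent to $w$ furnished by \cref{cor:simpleUFT}.

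The subtle point is that one cannot simply apply \cref{thm:lyndonshuf} to $w$ itself: this would only give $w \leq l_1 \cdots l_k$, whereas $\overline{w} \geq w$ by \cref{rem:smallestfact}, so the bound must be established for $\overline{w}$ directly. The main step I would carry out is to show that \textbf{the stable form $\overline{w}$ is again a word shuffle of $l_1, \dots, l_k$}. I would argue this by induction on the number of stability reductions of \cref{rem:stabred} needed to pass from $w$ to $\overline{w}$, maintaining the invariant that the current word is a word shuffle of $l_1, \dots, l_k$ with the same component subsequences. For the inductive step, suppose a reduction swaps the letters at adjacent positions $i, i+1$. If both positions belonged to the subsequence of the same factor $l_m$, then adjacency in the word forces them to be consecutive letters of $l_m$; but $l_m$ is stable, so this pair is a stable pair and no reduction applies there. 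Hence every reduction swaps letters lying in the subsequences of two \emph{distinct} factors $l_m, l_{m'}$, and a direct bookkeeping of the position sets shows that after the transposition each $l_i$ is still read off as a subsequence, preserving the invariant.

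Once $\overline{w}$ is known to be a word shuffle of the decreasing Lyndon words $l_1 \geq \dots \geq l_k$, \cref{thm:lyndonshuf} applies to $\overline{w}$ and yields $\overline{w} \leq l_1 \cdots l_k$, which is exactly $\gamma^* \leq_{fac} \delta^*$, as desired. The main obstacle is precisely the invariant in the inductive step: one must verify carefully that same-component adjacent letters are consecutive in their factor, and hence never reduced since each $l_i$ is both $\oplus$- and $\ominus$-stable, and that a cross-component transposition genuinely preserves all $k$ component subsequences. Everything else is a routine unwinding of the definitions of $\leq_{fac}$ and of the stable factorization.
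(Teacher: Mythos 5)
Your proposal is correct and follows essentially the same route as the paper's proof: both reduce to showing that the stable factorization of $\gamma^*$ remains a word shuffle of $l_1,\dots,l_k$ by checking that each stability reduction of \cref{rem:stabred} swaps letters from two distinct factors (same-factor adjacent letters are consecutive in their stable word $l_m$ and hence never reduced), and then both conclude by applying \cref{thm:lyndonshuf} to the stable word. Your explicit remark that $l_1\cdots l_k$ is the stable factorization of $l_1^*\star\dots\star l_k^*$ is a small point the paper leaves implicit, but it is not a different argument.
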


In the remaining of the section we assume these lemmas and conclude the proof of the freeness of the pattern algebra, by establishing \cref{thm:triang}.

\begin{proof}[Proof of \cref{thm:triang}]\label{prf:triang}
Because a product is a quasi-shuffle of its factors, it follows from \cref{rem:inflqshuf} and from $\alpha^* = \iota^*_1 \star \dots \star \iota^*_k$ that $c_{\alpha^* } \geq 1$.
We will complete the proof if we show that any $\beta^* $ that is a \textit{quasi-shuffle} of $\iota_1^* \dots , \iota_k^* $ satisfies either
\begin{itemize}
\item $|\beta^*| < |\alpha^* |$;

\item or $|\beta^*| = | \alpha^*|$ and $j(\beta^*) < j(\alpha^*) $;

\item or $|\beta^*| = |\alpha^* |$, $j(\beta^*) = j(\alpha^*) $ and $\beta^* \leq_{fac} \alpha^* $.
\end{itemize}

Suppose that $\beta^* $ is a quasi-shuffle of $\iota_1^* , \dots , \iota_k^*$.
For each $i=1, \dots k$, let $l_i$ be the SL word corresponding to the SL marked permutation $\iota_i^*$.
From \cref{lm:sizeqs} we only need to consider the case where $|\beta^*| = |\alpha^* |$ and $j(\beta^*) = j(\alpha^*) $.

From \cref{lm:bbl} we have that in this case, $\mathfrak{fac}(\beta^*) = \mathfrak{fac}(\alpha^*)$, and there is a marked permutation $\gamma^* $ that satisfies $\mathfrak{fac}(\gamma^*) = \mathfrak{fac}(\beta^*)$, $\gamma^* \geq_{fac} \beta^*$, and also has a factorization into irreducibles that is a word shuffle of $l_1 , \dots , l_k$.
Thus, from \cref{lm:bplol} we have that $\gamma^* \leq_{fac} l_1^*  \star \dots \star l_k^* = \alpha^*$.
It follows that $\beta^* \leq_{fac} \gamma^* \leq_{fac} \alpha^*$, as desired.
\end{proof}

\subsection{Proof of unique factorization theorem\label{sec:UFTProof}}

We start this section with the concept of \textit{DC intervals} and \textit{DC chains} of a marked permutation $\alpha^*$.
We will see that these are closely related to factorizations of $\alpha^*$, and we will exploit this correspondence to prove \cref{thm:isomonoids}.

Recall that if $\beta^* $ is a marked permutation in $X$, and $I\subseteq X$, we denote $I^* \coloneqq I \sqcup \{ * \}$ for simplicity of notation.
We also write $\mathbb{X}(\beta^* ) = X$.

\begin{defin}
Let $\beta^* $ be a marked permutation on the set $X$, \textit{i.e.} $\beta^* = (\leq_P, \leq_V)$ is a pair of orders on the set $X^*$.
A \textit{doubly connected interval} on $\beta^* $, or a \textit{DC interval} for short, is a set $I \subseteq X $ such that $I^* $ is an interval on both orders $\leq_P, \leq_V$.
A \textit{proper} DC interval is a DC interval $I$ such that $I \neq X$.
\end{defin}

Note that $X$ and $\emptyset $ are always DC intervals.
Note as well that if $I_1, I_2$ are DC intervals, then $I_1\cup I_2$ is also a DC interval.

\begin{rem}\label{rem:DCintervalsarebig}
Suppose that $\beta^* = (\leq_P, \leq_V ) $ is a marked permutation, and $I $ is a DC interval of $\beta^*$.
Consider $m_P, M_P\in X^*$ the minimal, respectively maximal, element for the order $\leq_P$.
If $m_P, M_P \in I^*$, then $I = X$.

Symmetrically, if $m_V, M_V\in X^* $ are the minimal, respectively maximal, elements for the order $\leq_V$, and $m_V, M_V \in I^*$, then $I = X$.
\end{rem}

For a marked permutation $\beta^*$, we use the notation $m_P, M_P, m_V, M_V $ in the remaining of the section to refer to its respective extremes, as in \cref{rem:DCintervalsarebig}.

\begin{defin}
For a DC interval $I$ of a marked permutation $\alpha^* =(\leq_P, \leq_V)$, we define the permutation $\alpha^* \setminus_{I^*} $ in the set $ I^c$ resulting from the restriction of the orders $\leq_P, \leq_V$ to the set $I^c $.
Alternatively, this is the permutation resulting from the removal of the marked element in $\alpha^*|_{I^c}$.
\end{defin}

\begin{rem}\label{rem:simpleDCinterv}
If $\alpha^* = a_1^* \star a_2^* $, then $\mathbb{X}(a_2^*) $ is a DC interval in $\alpha^* $.
On the other hand, if $I$ is a DC interval of $\alpha^*$, then $\alpha^* = \alpha^*|_{I^c} \star \alpha^*|_{I}$, so right factors of $\alpha^* $ are in bijection with DC intervals of $\alpha^*$.
Furthermore, $I$ is a maximal proper DC interval of $\alpha^* $ if and only if $\alpha^*|_{I^c}$ is irreducible.

To a factorization $\beta^* = \rho_1^* \star \dots \star \rho_j^*$ of a marked permutation $\beta^* \in \mathtt{MPer}[X]$ it corresponds a chain of DC intervals 
$$\emptyset = J_{j+1} \subsetneq J_j \subsetneq \dots \subsetneq J_1 = X \, .$$

The chain is defined by $J_k = \mathbb{X} (\rho_k^* \star \dots \star \rho_j^*)$ and satisfies $\beta^*|_{J_k \setminus J_{k+1} } = \rho_k^* $  for any $k = 1, \dots , j$.
This chain of DC intervals is maximal if and only if the original factorization has only irreducible factors.
\end{rem}

\begin{rem}\label{rem:oplusDCinterv}
The marked permutation $\alpha^* $ is $\oplus$-decomposable if and only if there is a DC interval $I$ of $\alpha^* $ such that $I^*$ contains both $m_P, m_V$, or contains both $M_P, M_V$.

In the first case, $\alpha^*$ factors as $\beta^*_1 \oplus \beta_2$, where $\beta_1^* = \alpha^*|_I $ and $\beta_2 = \alpha^*\setminus_{I^*}$.
In the second case, $\alpha^*$ factors as $\beta_1 \oplus \beta_2^*$, where $\beta_1 = \alpha^*\setminus_{I^*} $ and $\beta_2^* = \alpha^*|_{I}$.

Similarly, $\alpha^* $ is $\ominus$-decomposable if and only if there is a DC interval $I$ of $\alpha^* $ such that $I^*$ contains both $M_P, m_V$, or contains both $m_P, M_V$.

In the first case, $\alpha^*$ factors as $\beta^*_1 \ominus \beta_2$, where $\beta_1^* = \alpha^*|_I $ and $\beta_2 = \alpha^*\setminus_{I^*}$.
In the second case, $\alpha^*$ factors as $\beta_1 \ominus \beta_2^*$, where $\beta_1 = \alpha^*\setminus_{I^*} $ and $\beta_2^* = \alpha^*|_I$.
\end{rem}

This characterization of $\oplus$-decomposable marked permutations will be useful in the proof of the classification of the factorizations below, specifically in \cref{lm:simplecont}.
It is also useful to characterize all $\oplus$-decomposable marked permutations that are irreducible in \cref{sec:PrimEl}.

%

\begin{obs}[$\oplus$-factorization in marked permutations]\label{obs:oplusfact}
Let $\alpha^*$ be a marked permutation.
Then there are unique $\oplus$-indecomposable permutations $\epsilon_1, \dots , \epsilon_k$,  $\lambda_1, \dots, \lambda_j$ and $\beta^* $ an $\oplus$-indecomposable marked permutation such that 
$$ \alpha^* = \epsilon_1 \oplus \dots \oplus \epsilon_k \oplus \beta^* \oplus \lambda_j \oplus \dots \oplus \lambda_1 \, .$$

In this case, we say that $\alpha^* $ is $q$-$\oplus$-decomposable, where $q=k+j$.
\end{obs}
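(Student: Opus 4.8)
The plan is to reduce the statement to the classical unique decomposition of an ordinary permutation into $\oplus$-indecomposable blocks, applied to the underlying permutation $\alpha$ of $\alpha^*$ on the set $X^* = \mathbb{X}(\alpha^*) \sqcup \{*\}$. Writing $\alpha = C_1 \oplus \dots \oplus C_m$ for the finest such decomposition, the marked element $*$ lies in exactly one block, say $C_r$; I would then set $\epsilon_i = C_i$ for $i < r$, let $\beta^*$ be $C_r$ together with the mark at $*$, and read off $\lambda_j, \dots , \lambda_1$ as the blocks $C_{r+1}, \dots , C_m$, so that $q = k+j = m-1$. The only definitional point to check is that $\beta^*$ is $\oplus$-indecomposable as a marked permutation exactly when its underlying permutation $C_r$ is $\oplus$-indecomposable: any nontrivial splitting $\beta^* = \tau \oplus \pi^*$ or $\beta^* = \pi^* \oplus \tau$ with $\tau$ a nontrivial permutation is precisely a nontrivial $\oplus$-cut of $C_r$ with $*$ on one side, and conversely; so the two notions coincide. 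Each $\epsilon_i$ and $\lambda_i$ is $\oplus$-indecomposable as a permutation by construction.

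For both existence and uniqueness I would make the classical decomposition explicit through cut points. Order $X^* = \{x_1 <_P \dots <_P x_{N}\}$ with $N = |\mathbb{X}(\alpha^*)| + 1$, and call a position $p \in \{1, \dots , N-1\}$ a valid cut if the $p$ elements that are $\leq_P$-smallest coincide with the $p$ elements that are $\leq_V$-smallest. Whichever of the two pieces of such a cut contains $*$ then has the form $I^*$ for a DC interval $I$ of the kind appearing in \cref{rem:oplusDCinterv}, so that a valid cut is exactly a way of writing $\alpha^*$ as a $\oplus$ of two pieces. The finest decomposition is obtained by cutting at every valid position: the blocks are the maximal runs of consecutive $\leq_P$-positions with no internal valid cut, each of which is $\oplus$-indecomposable, and $\alpha = C_1 \oplus \dots \oplus C_m$ holds since across each cut the lower piece lies below and to the left of the upper piece in both orders. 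This settles existence.

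For uniqueness, the key observation is that the set of valid cut positions is intrinsic to $\alpha^*$, and that a valid cut localizes to blocks: if a block occupies $\leq_P$-positions $[a+1, b]$, hence also $\leq_V$-positions $[a+1,b]$ since it is a $\oplus$-block, and $p \in (a,b)$ is a valid cut of $\alpha$, then after removing the values $\{1, \dots , a\}$ carried by the earlier blocks one finds that the restriction to the block is itself split by a valid cut at local position $p-a$. Consequently, in any decomposition $\alpha^* = \epsilon_1 \oplus \dots \oplus \beta^* \oplus \dots \oplus \lambda_1$ into $\oplus$-indecomposable factors, no valid cut can fall strictly inside a factor, so the block boundaries are forced to be exactly all valid cut positions; this pins down the ordered list of blocks, and since $*$ lies in a unique block the splitting into the $\epsilon_i$, $\beta^*$, and $\lambda_i$ is forced. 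Alternatively, one may invoke \cref{cor:factsthm}, which already gives that the multiset of irreducible factors is well defined, together with the rigidity of the order structure that fixes their left-to-right arrangement.

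I expect the main obstacle to be the localization step: verifying cleanly that a valid $\oplus$-cut of $\alpha^*$ restricts to a valid cut of each block requires tracking the two orders $\leq_P$ and $\leq_V$ simultaneously and using that $\oplus$-blocks are intervals in both. The remainder is bookkeeping, namely matching the paper's definition of an $\oplus$-indecomposable marked permutation with indecomposability of the underlying permutation, and confirming that grouping the finest blocks around the one containing $*$ produces exactly the stated shape $\epsilon_1 \oplus \dots \oplus \epsilon_k \oplus \beta^* \oplus \lambda_j \oplus \dots \oplus \lambda_1$.
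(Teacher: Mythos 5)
The paper records this as an observation and supplies no proof, so there is nothing to compare line by line; your argument is correct and is exactly the reduction the statement implicitly rests on, namely passing to the underlying permutation on $X^*$, taking its classical finest $\oplus$-block decomposition, and grouping the blocks around the one containing $*$. The two points you isolate for verification --- that $\oplus$-indecomposability of $\beta^*$ as a marked permutation is equivalent to $\oplus$-indecomposability of its underlying block, and that valid cuts localize to blocks so that any decomposition into $\oplus$-indecomposable factors must cut at every valid position, forcing uniqueness --- are precisely the ones that need checking, and you handle both correctly.
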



%
%

\begin{smpl}[$q$-$\oplus$-decomposition]
Consider the marked permutation $21\bar{3}54$.
This is a $2$-decomposable marked permutation, as it admits the $\oplus$-factorization
$$ 21\bar{3}54 = 21 \oplus \bar{1} \oplus 21 \, . $$
\end{smpl}

A marked permutation is $\oplus $-indecomposable if and only if it is 0-$\oplus$-decomposable.

\begin{lm}[Irreducible $\oplus$-factor lemma]\label{lm:opcontinuity}
Suppose that $\alpha^* $ is a marked permutation that has the following $\oplus $-factorization
$$ \alpha^* =  \epsilon_1 \oplus \dots \oplus \epsilon_u \oplus \beta^* \oplus \lambda_v \oplus \dots \oplus \lambda_1 \, , $$
where $u+v >  0$.
Consider any factorization $\alpha^* = \sigma^* \star \gamma^* $, where $\sigma^* $ is an irreducible marked permutation.

\begin{itemize}
\item If both $u>0$ and $v>0$, then either $\sigma^* = \bar{1}\oplus \lambda_1 $ or $\sigma^* = \epsilon_1 \oplus \bar{1}$.

\item If $u=0$, then $\sigma^* = \bar{1}\oplus \lambda_1 $.

\item If $v=0 $, then $\sigma^* = \epsilon_1 \oplus \bar{1}$.
\end{itemize}

\end{lm}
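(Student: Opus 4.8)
The plan is to translate the statement into a question about the \emph{maximal proper DC intervals} of $\alpha^*$, and then to read off which single-block removals are compatible with the $\oplus$-factorization. First I would invoke \cref{rem:simpleDCinterv}: given a factorization $\alpha^* = \sigma^* \star \gamma^*$ with $\sigma^*$ irreducible, the set $I := \mathbb{X}(\gamma^*)$ is a DC interval of $\alpha^*$, one has $\sigma^* = \alpha^*|_{I^c}$, and $\sigma^*$ is irreducible exactly when $I$ is a maximal proper DC interval. Thus the lemma reduces to classifying the maximal proper DC intervals of $\alpha^*$ and checking that deleting the complementary block produces $\epsilon_1 \oplus \bar 1$ or $\bar 1 \oplus \lambda_1$.

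The key technical step is an \emph{endpoint-block dichotomy}: for the bottom-left block $\epsilon_1$ (present when $u>0$) and the top-right block $\lambda_1$ (present when $v>0$), any proper DC interval $I$ either contains the whole block or is disjoint from it. I would prove this for $\epsilon_1$ by noting that $I^*$ is an interval of both $\leq_P$ and $\leq_V$ containing $*$, while every element of $\epsilon_1$ lies strictly $\leq_P$- and $\leq_V$-below $*$ (since $\epsilon_1$ is the bottom-left $\oplus$-block and $*\in\beta^*$). Hence $S := I\cap\mathbb{X}(\epsilon_1)$ is simultaneously a $\leq_P$-suffix and a $\leq_V$-suffix of $\epsilon_1$; a nonempty proper such $S$ would give the nontrivial direct sum $\epsilon_1 = (\epsilon_1\setminus S)\oplus S$, contradicting $\oplus$-indecomposability of $\epsilon_1$. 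The mirror argument with prefixes handles $\lambda_1$. I would also record that $X\setminus\mathbb{X}(\epsilon_1)$ (resp.\ $X\setminus\mathbb{X}(\lambda_1)$) is a proper DC interval when $u>0$ (resp.\ $v>0$), since deleting the bottom-left (resp.\ top-right) block leaves an interval of both orders still containing $*$, and that its left factor is $\alpha^*|_{\mathbb{X}(\epsilon_1)} = \epsilon_1\oplus\bar 1$ (resp.\ $\alpha^*|_{\mathbb{X}(\lambda_1)} = \bar 1\oplus\lambda_1$).

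Then I would split into the three cases. When $u,v>0$: a proper DC interval cannot contain both $\mathbb{X}(\epsilon_1)$ and $\mathbb{X}(\lambda_1)$, for then $I^*$ would contain $m_P$ and $M_P$, forcing $I = X$ by \cref{rem:DCintervalsarebig}; combined with the dichotomy, a maximal $I$ is disjoint from one of the two blocks, and maximality upgrades disjointness to equality with the corresponding candidate, so $\sigma^*\in\{\epsilon_1\oplus\bar1,\ \bar 1\oplus\lambda_1\}$. When $v=0$ (the case $u=0$ being symmetric) the top-right block is the marked block $\beta^*$, so the dichotomy no longer governs it; here I would rule out $I\supseteq\mathbb{X}(\epsilon_1)$ directly, since such an $I^*$ is a common prefix of both orders, whence $I^*\cap\mathbb{X}(\beta^*)^*$ is a common $\leq_P,\leq_V$-prefix of $\beta^*$ containing $*$, proper because $I\neq X$, exhibiting $\beta^* = \pi^*\oplus\tau$ with $\pi^*$ marked and $\tau\neq\emptyset$ and contradicting $\oplus$-indecomposability of $\beta^*$. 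Hence $I$ is disjoint from $\epsilon_1$ and, by maximality, $I = X\setminus\mathbb{X}(\epsilon_1)$, giving $\sigma^* = \epsilon_1\oplus\bar 1$; the degenerate $\beta^*=\bar 1$, $u=1$ makes $X\setminus\mathbb{X}(\epsilon_1)=\emptyset$, so $\alpha^*$ is irreducible and $\sigma^* = \alpha^* = \epsilon_1\oplus\bar 1$, still as claimed.

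The main obstacle I anticipate is precisely this last case: when one end of the $\oplus$-chain is the marked block $\beta^*$ rather than an ordinary permutation, the suffix/prefix dichotomy fails for that block and must be replaced by the prefix argument leveraging $\oplus$-indecomposability of $\beta^*$ together with the position of $*$, and the degenerate possibility $\beta^* = \bar 1$ (where $\alpha^*$ itself is already irreducible) has to be treated separately rather than through a removed block.
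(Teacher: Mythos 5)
Your proof is correct, and its skeleton matches the paper's: both reduce the claim, via \cref{rem:simpleDCinterv}, to identifying the maximal proper DC intervals of $\alpha^*$; both invoke \cref{rem:DCintervalsarebig} to forbid a proper DC interval from capturing both $\leq_P$-extremes; and both settle the one-sided case ($u=0$ or $v=0$) by manufacturing a forbidden $\oplus$-decomposition of the central block $\beta^*$. The difference lies in the mechanics of the generic case $u,v>0$. The paper takes the three maximal proper DC intervals $Y=\mathbb{X}(\gamma^*)$, $Y_1=\mathbb{X}(\epsilon_2\oplus\dots\oplus\lambda_1)$, $Y_2=\mathbb{X}(\epsilon_1\oplus\dots\oplus\lambda_2)$, uses that the union of two \emph{distinct} maximal proper DC intervals must be all of $X$, and concludes $m_P,M_P\in Y^*$ at once. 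You instead prove a contain-or-avoid dichotomy for the endpoint blocks, which costs an extra (correct) argument -- that a nonempty proper common suffix of $\epsilon_1$ would $\oplus$-decompose $\epsilon_1$ -- but yields an explicit classification of the maximal proper DC intervals; the paper's maximality trick sidesteps the need to control DC intervals that partially overlap $\epsilon_1$ or $\lambda_1$. One half-line to add in your $v=0$ case: ``proper because $I\neq X$'' really requires that every element of $X\setminus I$ lies in $\mathbb{X}(\beta^*)$, which follows since $I^*$ is a common prefix containing $*$ and $\beta^*$ is the $\leq_P$-top block -- this is the same observation you use to get $\tau\neq\emptyset$, so nothing is missing in substance. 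Your explicit treatment of the degenerate case $\beta^*=\bar 1$, $u=1$ is a point the paper leaves implicit.
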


\begin{proof}
Let us deal with the case where both $u > 0 $ and $v > 0$ first.
Assume that neither  $\sigma^* = \bar{1}\oplus \lambda_1 $ nor $\sigma^* = \epsilon_1 \oplus \bar{1}$.
Consider the following DC intervals, which are all distinct
$$ Y_1 = \mathbb{X}(\epsilon_2\oplus \dots \oplus \epsilon_u \oplus \beta^* \oplus \lambda_v \oplus \dots \oplus \lambda_1 )  \, , $$
$$ Y_2 = \mathbb{X}(\epsilon_1\oplus \dots \oplus \epsilon_u \oplus \beta^* \oplus \lambda_v \oplus \dots \oplus \lambda_2 ) \, , $$
$$ Y = \mathbb{X}(\gamma^* )\, . $$

Note that $m_P \in Y_2^*\setminus Y_1^*$ and $M_P\in Y_1^*\setminus Y_2^*$ by construction.
Note also that $\alpha^*|_{Y_1^c} = \epsilon_1\oplus \bar{1} $  and $ \alpha^*|_{Y_2^c} = \bar{1} \oplus  \lambda_1$ are irreducible marked permutations, so both $Y_1, Y_2$ are maximal proper DC intervals.
Furthermore, $Y$ is also a maximal proper DC interval.
This maximality gives us that $Y \cup Y_1  = X = Y\cup Y_2 $.

From $Y \cup Y_1  = X$ and $m_P\not\in Y_1^* $, we have that $m_P \in Y^*$, and from $Y\cup Y_2 = X $ and $M_P\not\in Y_2^* $ we get that $M_P \in Y^*$.
From \cref{rem:DCintervalsarebig}, this is a contradiction with the fact that $Y$ is a proper DC interval, thus contradicting the assumption that neither $ \sigma^* = \bar{1}\oplus \lambda_1 $ nor $ \sigma^* = \epsilon_1 \oplus \bar{1} $, as desired.

\medskip

Now suppose that $u=0$, and $v > 0 $.
Assume for the sake of contradiction that $\sigma^* \neq \bar{1}\oplus \lambda_1$, and define the following distinct DC intervals
$$ Y_2 = \mathbb{X}(\beta^* \oplus \lambda_v \oplus \dots \oplus \lambda_2 ) \text{ and } Y = \mathbb{X}(\gamma^* )\, . $$
Because both $\sigma^* $ and $ \bar{1}\oplus \lambda_1 $ are irreducible, the DC intervals $Y_2, Y $ are both maximal proper, so $X = Y \cup Y_2$.
Further, $m_P \in Y_2^* $ by construction, so $M_P\in Y^* $ by \cref{rem:DCintervalsarebig}.
Consider the DC interval $I = \mathbb{X}(\beta^* ) $, and notice that $m_P \in I^* $ by construction.
Thus we have that $I^* \cup Y^*  = X^* $, and $Y^c \subseteq I$.

\begin{figure}[h]
\centering
\includegraphics[scale=1]{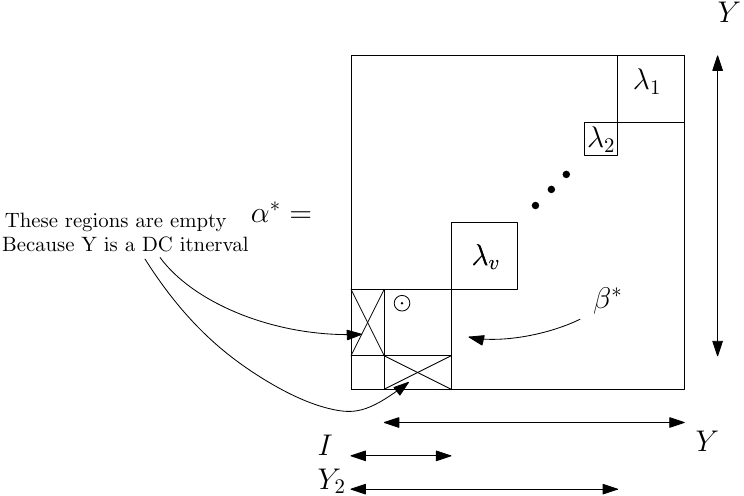}
\caption{A decomposition of $\beta^*$, whenever the DC interval $Y$ is proper\label{fig:splitbeta}}
\end{figure}

We have the following decomposition, depicted in \cref{fig:splitbeta}:
$$\beta^* = \alpha^*\setminus_{Y^*} \oplus \alpha^*|_{I\cap Y} \, . $$
Notice that $ \alpha^*\setminus_{Y^*}$ is not the empty permutation, because $ Y $ is a proper DC interval.
This is a contradiction with the fact that $\beta^*$ is $\oplus$-indecomposable, so $\sigma^* = \bar{1}\oplus \lambda_1$.

The case where $u> 0 $ and $v=0$ is done in a similar way, so the result follows.
\end{proof}

\begin{cor}\label{cor:opcontinuity}
Consider a marked permutation $\alpha^*$ that is $q$-$\oplus$-decomposable, so that $ \alpha^* =  \epsilon_1\oplus \dots \oplus \epsilon_u \oplus \beta^* \oplus \lambda_v \oplus \dots \oplus \lambda_1 $ is the $\oplus$-decomposition of $\alpha^* $. 
If $\sigma_1^* \star \dots \star \sigma_j^*$ is a factorization of $\alpha^*$ into irreducibles, then $j\geq q$ and

\begin{enumerate}
\item $\sigma_{q+1}^*\star \dots \star \sigma_j^* = \beta^* $.

\item By applying $\oplus$-relations to $(\sigma_1^*, \dots , \sigma_q^*)$ we can obtain
$$ (\bar{1}\oplus \lambda_1, \dots, \bar{1}\oplus \lambda_v, \epsilon_1\oplus \bar{1}, \dots , \epsilon_u\oplus \bar{1} )\, . $$
\end{enumerate}
\end{cor}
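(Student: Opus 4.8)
The plan is to induct on $q$, at each step peeling off the leftmost irreducible factor $\sigma_1^*$ and invoking \cref{lm:opcontinuity}. For the base case $q = 0$ the marked permutation $\alpha^* = \beta^*$ is $\oplus$-indecomposable: then $j \geq 0$ holds trivially, statement (1) reads $\sigma_1^* \star \dots \star \sigma_j^* = \alpha^* = \beta^*$, which is the hypothesis, and statement (2) concerns the empty word and the empty target list (since $u = v = 0$), so it is vacuous. Note also that whenever $q \geq 1$ we must have $j \geq 1$: if $j = 0$ then $\alpha^* = \bar{1}$, which is $0$-$\oplus$-decomposable, contradicting $q \geq 1$. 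Hence the lemma is applicable in the inductive step.

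For the inductive step with $q \geq 1$, write $\gamma^* = \sigma_2^* \star \dots \star \sigma_j^*$ so that $\alpha^* = \sigma_1^* \star \gamma^*$, and apply \cref{lm:opcontinuity}. This forces $\sigma_1^*$ to be one of $\epsilon_1 \oplus \bar{1}$ or $\bar{1} \oplus \lambda_1$ (only the first when $v = 0$, only the second when $u = 0$). I then read off $\gamma^*$ from the inflation product: inflating $\epsilon_1 \oplus \bar{1}$ by $\gamma^*$ fills the marked slot and yields $\epsilon_1 \oplus \gamma^*$, so the case $\sigma_1^* = \epsilon_1 \oplus \bar{1}$ gives $\alpha^* = \epsilon_1 \oplus \gamma^*$; comparing with the $\oplus$-decomposition of $\alpha^*$ and using the uniqueness in \cref{obs:oplusfact} identifies
\[
\gamma^* = \epsilon_2 \oplus \dots \oplus \epsilon_u \oplus \beta^* \oplus \lambda_v \oplus \dots \oplus \lambda_1
\]
as the $\oplus$-decomposition of $\gamma^*$, so $\gamma^*$ is $(q-1)$-$\oplus$-decomposable with the same $\beta^*$, one fewer $\epsilon$, and the same $\lambda$'s. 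Symmetrically, $\sigma_1^* = \bar{1} \oplus \lambda_1$ gives $\alpha^* = \gamma^* \oplus \lambda_1$, and $\gamma^*$ is $(q-1)$-$\oplus$-decomposable with the same $\beta^*$, the same $\epsilon$'s, and one fewer $\lambda$. In either case the induction hypothesis applies to the factorization $\sigma_2^* \star \dots \star \sigma_j^*$ of $\gamma^*$, giving at once $j - 1 \geq q - 1$ (hence $j \geq q$) and $\sigma_{q+1}^* \star \dots \star \sigma_j^* = \beta^*$, which is exactly statement (1).

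It remains to recover statement (2) by prepending $\sigma_1^*$ to the rearranged word obtained for $\gamma^*$. When $\sigma_1^* = \bar{1} \oplus \lambda_1$ this is immediate: prepending $\bar{1} \oplus \lambda_1$ to $(\bar{1}\oplus\lambda_2, \dots, \bar{1}\oplus\lambda_v, \epsilon_1\oplus\bar{1}, \dots, \epsilon_u\oplus\bar{1})$ produces exactly the target word for $\alpha^*$. When $\sigma_1^* = \epsilon_1 \oplus \bar{1}$, prepending gives $(\epsilon_1\oplus\bar{1}, \bar{1}\oplus\lambda_1, \dots, \bar{1}\oplus\lambda_v, \epsilon_2\oplus\bar{1}, \dots, \epsilon_u\oplus\bar{1})$, and I must slide $\epsilon_1 \oplus \bar{1}$ rightward past each $\bar{1}\oplus\lambda_i$. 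Each such transposition is precisely an $\oplus$-relation from \cref{rem:oplusominusrelations} with $\tau_1 = \lambda_i$ and $\tau_2 = \epsilon_1$, so the move is legal and places $\epsilon_1 \oplus \bar{1}$ immediately before $\epsilon_2 \oplus \bar{1}$, yielding the target word. Recognizing these needed transpositions as genuine $\oplus$-relations is the only delicate point; the bookkeeping of which factor is peeled off and the appeal to uniqueness of the $\oplus$-decomposition in \cref{obs:oplusfact} are the remaining ingredients, with \cref{lm:opcontinuity} doing the real work.
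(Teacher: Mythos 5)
Your proof is correct and follows essentially the same route as the paper: induction on $q$, using \cref{lm:opcontinuity} to pin down $\sigma_1^*$, cancellation (the paper cites \cref{rem:integerdomain}, you use uniqueness of the $\oplus$-decomposition, which amounts to the same thing) to identify $\sigma_2^*\star\dots\star\sigma_j^*$, and then $\oplus$-relations to slide $\epsilon_1\oplus\bar{1}$ past the $\bar{1}\oplus\lambda_i$'s. The only cosmetic difference is that the paper commutes the letters in the target word toward $(\sigma_1^*,\dots,\sigma_q^*)$ rather than the other way around, which is immaterial since $\sim$ is symmetric.
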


\begin{proof}
We use induction on $q$.
The base case is $q=0$, where there is nothing to establish in 2. and we need only to show that 
$$ \sigma_{1}^* \star \dots \star \sigma_j^* = \beta^* \, ,$$
which follows because $ \sigma_{1}^* \star \dots \star \sigma_j^* = \alpha^* = \beta^* $.

Now for the induction step, we assume that $q\geq 1$ .
From \cref{lm:opcontinuity}, $\sigma_1^* $ is either $\epsilon_1 \oplus \bar{1} $ or $\bar{1} \oplus \lambda_1$, thus according to \cref{rem:integerdomain} $\zeta^* := \sigma_2^* \star \dots \star \sigma_k^* $ is either
$$ \epsilon_2\oplus \dots \oplus \epsilon_u \oplus \beta^* \oplus \lambda_v \oplus \dots \oplus \lambda_1\, , $$
or
$$ \epsilon_1\oplus \dots \oplus \epsilon_u \oplus \beta^* \oplus \lambda_v \oplus \dots \oplus \lambda_2. $$

Without loss of generality, assume the first case.
Then $\zeta^* $ is $(q-1)$-$\oplus$-decomposable, so the induction hypothesis applies to the factorization of $\zeta^* $.
That is, we have that 
$$\sigma_{q+1}^* \star \dots \star \sigma_j^* =  \beta^* \,  .$$

Furthermore, the induction case gives us that $ (\bar{1}\oplus \lambda_1, \dots, \bar{1}\oplus \lambda_v, \epsilon_2\oplus \bar{1}, \dots , \epsilon_u\oplus \bar{1} ) $ can be obtained from $(\sigma_2^*, \dots ,  \sigma_q^* )$ by a series of $\oplus $-relations.

So, by only using the $\oplus$-relations and the inductive hypothesis we deduce that:
\begin{equation}
\begin{aligned}
 (&\bar{1}\oplus \lambda_1, \dots, \bar{1}\oplus \lambda_v, \epsilon_1\oplus \bar{1} , \epsilon_2\oplus \bar{1}, \dots , \epsilon_u\oplus \bar{1} ) \\
\sim \, \, \,  \quad \quad (& \epsilon_1\oplus \bar{1}, \bar{1}\oplus \lambda_1, \dots, \bar{1}\oplus \lambda_v, \epsilon_2\oplus \bar{1}, \dots , \epsilon_u\oplus \bar{1} ) \\
\sim \, \, \,  \quad \quad (&\sigma_1^*, \sigma_2^*, \dots ,  \sigma_q^* ) 
\end{aligned}
\end{equation}

concluding the the proof.
\end{proof}

We remark that \cref{obs:oplusfact,lm:opcontinuity,cor:opcontinuity} have counterparts for the $\ominus$ operation.

%
%
%
%
%
%
%
%

\begin{lm}[Indecomposable irreducible factor lemma]\label{lm:simplecont}
Suppose that $\alpha^* $ is a non-trivial marked permutation that is both $\oplus$-indecomposable and $\ominus$-indecomposable.
Consider two factorizations of $\alpha^* $,  $ \sigma_1^* \star \beta^*_1 $ and $ \sigma_2^*  \star \beta_2^* $, where $\sigma_1^* , \sigma_2^* $ are irreducible marked permutations.
Then $\sigma_1^* = \sigma_2^* $ and $\beta_1^* = \beta_2^*$.
\end{lm}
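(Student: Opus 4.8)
The plan is to translate the statement into the language of \emph{DC intervals} via \cref{rem:simpleDCinterv} and then run a pigeonhole argument on the four extremes $m_P, M_P, m_V, M_V$ of $\alpha^*$. Writing $I_1 = \mathbb{X}(\beta_1^*)$ and $I_2 = \mathbb{X}(\beta_2^*)$, \cref{rem:simpleDCinterv} tells us that $I_1, I_2$ are DC intervals with $\beta_t^* = \alpha^*|_{I_t}$ and $\sigma_t^* = \alpha^*|_{I_t^c}$, and that the irreducibility of $\sigma_1^*, \sigma_2^*$ is equivalent to $I_1, I_2$ being \emph{maximal} proper DC intervals. Since both $\beta_t^* = \alpha^*|_{I_t}$ and $\sigma_t^* = \alpha^*|_{I_t^c}$ are determined by $I_t$, the whole lemma reduces to the single claim that $I_1 = I_2$.

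I would argue this by contradiction. Assume $I_1 \neq I_2$; without loss of generality $I_2 \not\subseteq I_1$. Then $I_1 \cup I_2$ is again a DC interval (DC intervals are closed under unions, since both contain $*$) and it strictly contains $I_1$, so the maximality of the proper interval $I_1$ forces $I_1 \cup I_2 = X$, that is $I_1^* \cup I_2^* = X^*$. In particular each of the four extremes $m_P, M_P, m_V, M_V$ lies in $I_1^*$ or in $I_2^*$.

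Now I would invoke \cref{rem:DCintervalsarebig}: since $I_1, I_2$ are \emph{proper}, no $I_t^*$ can contain both $m_P$ and $M_P$, nor both $m_V$ and $M_V$. Combined with the cover $I_1^* \cup I_2^* = X^*$, this forces each of the pairs $\{m_P, M_P\}$ and $\{m_V, M_V\}$ to be split, with exactly one element in $I_1^*$ and the other in $I_2^*$ (if $m_P$ lay in neither it would violate the cover, and if it lay in both then whichever of $I_1^*, I_2^*$ also contains $M_P$ would contain the forbidden pair). After relabelling I may assume $m_P \in I_1^*$, whence $I_1^*$ contains exactly one of $m_V, M_V$: if $m_V \in I_1^*$, then $I_1^*$ contains $\{m_P, m_V\}$, so by \cref{rem:oplusDCinterv} the proper DC interval $I_1$ witnesses that $\alpha^*$ is $\oplus$-decomposable; if instead $M_V \in I_1^*$, then $I_1^*$ contains $\{m_P, M_V\}$, so by \cref{rem:oplusDCinterv} $\alpha^*$ is $\ominus$-decomposable. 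Either outcome contradicts the hypothesis that $\alpha^*$ is simultaneously $\oplus$- and $\ominus$-indecomposable, so $I_1 = I_2$ and the lemma follows.

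The main obstacle I anticipate is the bookkeeping in the pigeonhole step: one must check that the constraint ``at most one element of each extreme pair per proper interval'' (\cref{rem:DCintervalsarebig}) together with the cover $I_1^* \cup I_2^* = X^*$ genuinely \emph{forces} both pairs to split, rather than merely to be covered, and then that after fixing $m_P \in I_1^*$ the two possibilities for the location of $m_V, M_V$ each produce one of the forbidden configurations of \cref{rem:oplusDCinterv}. Beyond this finite case analysis the argument is routine.
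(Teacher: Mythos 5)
Your proposal is correct and follows essentially the same route as the paper's proof: reduce via \cref{rem:simpleDCinterv} to showing $\alpha^*$ has a unique maximal proper DC interval, deduce $I_1\cup I_2 = X$ from maximality, and run the pigeonhole on $m_P, M_P, m_V, M_V$ using \cref{rem:DCintervalsarebig} and \cref{rem:oplusDCinterv} to contradict indecomposability. Your treatment of the pigeonhole step is in fact slightly more explicit than the paper's.
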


\begin{proof}
According to \cref{rem:simpleDCinterv}, it suffices to see that $\alpha^* $ only has one maximal proper DC interval.
Suppose otherwise, that there are $Y_1, Y_2$ distinct proper maximal DC intervals, for the sake of contradiction.
Further, consider $m_P, M_P, m_V, M_V$ the usual maximal and minimal elements in $ \mathbb{X}(\alpha^* )^* $.

We have that $Y_1 \cup Y_2 = X$, so $m_P, M_P, m_V, M_V \in Y_1^* \cup Y_2^* $.
Without loss of generality suppose that $m_P\in Y_1^*$.
We know that $\{m_V, M_V\} \not\subseteq Y_2^*$, so there are only two cases to consider:

\begin{itemize}
\item We have that $m_P, m_V \in Y_1^* $.
Then $\alpha^* $ is $\oplus$-decomposable, according to \cref{rem:oplusDCinterv}.


\item We have that $m_P, M_V \in Y_1^* $.
Then $\alpha^* $ is $\ominus$-decomposable, according to \cref{rem:oplusDCinterv}.


\end{itemize}

In either case, we reach a contradiction.
It follows that $\beta^*_1 = \beta^*_2 $ and $\sigma_1^* = \sigma^*_2$.
\end{proof}


\begin{proof}[Proof of \cref{thm:isomonoids}]
Consider a marked permutation $\alpha^*$, and take words $w_1 = (\xi_1^*,  \dots ,  \xi_k^*)$ and $w_2 = (\rho_1^* ,  \dots , \rho_j^*)$ in $\mathcal{W}(\Omega)$ such that
\begin{equation}\label{eq:givenrelation}
\xi_1^* \star \dots \star \xi_k^* = \rho_1^* \star \dots \star \rho_j^* =: \alpha^* \, ,
\end{equation}
and assume for the sake of contradiction that these can be chosen with $w_1 \not\sim w_2$.
Further choose such words minimizing $k+j = | w_1| + | w_2|$.
We need only to consider four cases:

{\bf The marked permutation $\alpha^* = \bar{1}$ is trivial:} then by a size argument we have that both words $ w_1 $ and $w_2 $ are empty.
Thus  $ w_1 = w_2 $.

{\bf The marked permutation $\alpha^* $ is indecomposable:}
in this case, from \cref{lm:simplecont} we know that $\xi_1^* = \rho^*_1$ and that $\xi_2^* \star \dots\star \xi_k^* = \rho_2^* \star \dots \star \rho_j^* $.
Thus, by minimality, $(\xi_2^*, \dots , \xi_k^*) \sim ( \rho_2^*,  \dots  ,  \rho_j^*)$, which implies $(\xi_1^*, \dots , \xi_k^*)\sim ( \rho_1^*,  \dots  ,  \rho_j^*) $ and that is a contradiction.

{\bf The marked permutation $\alpha^* $ is $\oplus$-decomposable:}
Assume now that $\alpha^* $ is $q$-$\oplus$-decomposable, for some $q>0$.
That is, there are $\oplus$-indecomposables such that 
$$\alpha^* = \epsilon_1 \oplus \dots \oplus \epsilon_u \oplus \beta^* \oplus \lambda_v \oplus \dots \oplus \lambda_1, \quad u+v = q \,  . $$

Then \cref{cor:opcontinuity} tells us that $q \leq k, j$ and that
\begin{equation}\label{eq:kerdec}
 \xi_{q+1}^* \star \dots \star \xi_k^* = \beta^*  = \rho_{q+1}^* \star \dots \star \rho_j^* \, . 
\end{equation}
Further, we also have that
\begin{equation}\label{eq:oplusdec}
(\xi_1^* , \dots ,  \xi_q^*) \sim (\bar{1}\oplus \lambda_1, \dots, \bar{1}\oplus \lambda_v, \epsilon_1\oplus \bar{1}, \dots , \epsilon_u\oplus \bar{1} ) \sim (\rho_1^* , \dots , \rho_q^* )  \, . 
\end{equation}

Because $q > 0$, from \eqref{eq:kerdec} and the minimality of $(\xi_1^*, \dots , \xi_k^*),  ( \rho_1^*,  \dots  ,  \rho_j^*) $ we have that
 $$(\xi_{q+1}^*, \dots , \xi_k^*)\sim ( \rho_{q+1}^*,  \dots  ,  \rho_j^*) \, .$$
This, together with \eqref{eq:oplusdec} gives us $(\xi_1^*, \dots , \xi_k^*)\sim ( \rho_1^*,  \dots  ,  \rho_j^*) $ and that is a contradiction.

{\bf The marked permutation $\alpha^* $ is $\ominus$-decomposable: }
This case is similar to the previous one.

We conclude that a factorization of a marked permutation into irreducibles is unique up to the relations in \cref{rem:oplusominusrelations}.
\end{proof}

\subsection{Proofs of \cref{lm:bplol,lm:bbl,lm:sizeqs}\label{sec:proofoflemmas}}

We start by fixing some notation.
Consider a multiset $\{\iota_1^* \geq \dots \geq \iota_k^* \}$ of SL marked permutations, and let $\alpha^*$ be the marked permutation with Lyndon factorization $(\iota_1^*, \dots , \iota_k^*)$ which exists and is unique by \cref{thm:Lyndonmperfact}, and has $\alpha^* = \iota_1^* \star \dots  \star \iota_k^*$.

Note that  $\mathcal{S} := \biguplus_{i=1}^k \mathfrak{fac} ( \iota_i^*) = \mathfrak{fac}(\alpha^* )$ as multisets, from \cref{lm:jfacwdef}.

\begin{proof}[Proof of \cref{lm:sizeqs}]
Let $\beta^* $ be a marked permutation in $X$.
By \cref{defin:qsands}, there are sets $I_1, \dots , I_k $ such that:
\begin{equation}\label{eq:cover}
I_1 \cup \dots \cup I_k = X \text{ and } \beta^*|_{I_i} \sim \iota_i^* \, \, \, \forall i  =1, \dots , k \, .
\end{equation}

Suppose that $\beta^* $ has stable factorization $\beta^* = \rho_1^* \star \dots  \star \rho_j^*$, where $j = j(\beta^*)$, with a corresponding maximal DC interval chain
$$\emptyset = J_{j+1} \subsetneq J_j \subsetneq  \dots \subsetneq J_1 = X \, , $$
as given in \cref{rem:simpleDCinterv}, so that $ \beta^*|_{J_p \setminus J_{p+1}}= \rho_p $ for $p=1, \dots , j$.

Since $\beta^* $ is a quasi-shuffle of $\iota_1^*, \dots , \iota_k^*$, the sets $I_1, \dots , I_k $ cover $X$ and so
\begin{equation}\label{eq:sizecomp}
 |\beta^*  | = | X |\leq \sum_{i=1}^k | I_i| = \sum_{i=1}^k |\iota_i^* | = |\alpha^* |\, , 
\end{equation}
so we conclude that $|\beta^* | \leq |\alpha^* |$.

Now, assume that we have $|\beta^* | = |\alpha^* |$, so we have equality in \eqref{eq:sizecomp}, thus the sets $I_i$ are mutually disjoint.
Recall that $j(\alpha^* ) = \sum_i j(\iota_i^* )$, as shown in \cref{lm:jfacwdef}.
Therefore, we only need to establish that $\sum_i j(\iota_i^* ) \geq j$.
For each $i= 1, \dots ,  k $, the following is a DC interval chain of $\iota_i^*$, which is not necessarily a maximal one:
\begin{equation}\label{eq:factoreddcchain}
\emptyset = J_{j+1}\cap I_i \subseteq J_j\cap I_i \subseteq \dots \subseteq J_1\cap I_i = I_i \, , 
\end{equation}
so let us consider the set $U_p:=\{i\in [ k ] | J_{p+1}\cap I_i \neq J_p \cap I_i \}$.

First it is clear that each $U_p$ is non empty, as otherwise we would have 
$$J_{p+1} = \bigcup_i J_{p+1}\cap I_i = \bigcup_i J_p \cap I_i = J_p \, .  $$

On the other hand, the length of the DC chain is given precisely by the number of strict inequalities in \eqref{eq:factoreddcchain}, that is by $| \{p \in [j] | i \in U_p\} |$. 
From \cref{lm:jfacwdef,rem:simpleDCinterv}, this is at most $j(\beta^*|_{I_i} ) = j(\iota_i^*)$, so
\begin{equation}\label{eq:jbound}
 \sum_i j(\iota_i^* ) \geq \sum_i |\{p \in [j] | i \in U_p\}| = \sum_{p=1}^j | U_p | \geq j \, .
\end{equation}

So we conclude that $j(\alpha^*) \geq  j(\beta^*)$.
\end{proof}

\begin{proof}[Proof of \cref{lm:bbl}]
Assume now that $|\beta^* | = |\alpha^* |$ and $j(\beta^*) = j(\alpha^*)$.
As a corollary of the proof above, and using the same notation, we have that the sets $I_i$ are pairwise disjoint, and we obtain equality all through \eqref{eq:jbound}, so that each $U_p$ is in fact a singleton.

We wish to split the indexing set $[j(\gamma^*)]$ into $k$ many disjoint increasing sequences $q_1^{(i)} < \dots < q_{j(\iota_i^*)}^{(i)} $ for $i=1, \dots , k$ such that 
\begin{equation}\label{eq:stableseqsplit}
\iota_i^*= \tau_{q_{1}^{(i)}}^* \star \dots \star \tau_{q_{j(\iota_i^*)}^{(i)}}^*  \, ,
\end{equation}
is precisely the stable factorization of each $\iota_i^*$.

Define the map $ \zeta: [j] \to [k] $ that sends $p $ to the unique element in $U_p$.
This map will give us the desired increasing sequences.
It satisfies
$$ | \zeta^{-1} ( i )| = |\{p \in [j] | i \in U_p\}| = j(\iota_i^* ) \, \text{ for } \, i=1 , \dots , k \, .$$
%

Write $\zeta^{-1}(i)=\{q^{(i)}_1 < \dots  < q^{(i)}_{j(\iota_i^*) } \}\subseteq [j]$ and set $q^{(i)}_{j(\iota_i^*) + 1 }= j+1$.
In the following, we identify the marked permutations $\iota_i^*$ and $\beta^*|_{I_i}$ in order to find a factorization of $\iota^*$ into irreducibles.
Specifically, we get that

\begin{equation}\label{eq:factrho}
\iota_i^* = \beta^*|_{I_i} = \rho_1^*|_{I_i\cap J_1\setminus J_2} \star \dots \star \rho_j^*|_{I_i\cap J_j\setminus J_{j+1}} 
\end{equation}
is a factorization of $\iota_i^*$.
Because each $U_p$ is a singleton, $I_i\cap J_p\setminus J_{p+1} $ is either $J_p\setminus J_{p+1}$ or $\emptyset$, thus $\rho_p^*|_{I_i\cap J_p\setminus J_{p+1}} $ is either $\rho_p^*$ of $\bar{1}$, and the factorization in \eqref{eq:factrho}, after removing the trivial terms, becomes the following factorization into irreducibles:

\begin{equation}\label{eq:simplefactrho}
\iota_i = \rho_{q^{(i)}_1}^*\star \dots \star \rho_{q^{(i)}_{j(\iota^*_i)}}^*\, .
\end{equation}
Then, we indeed have that $\mathfrak{fac}(\beta^* ) = \biguplus_{i=1}^k \mathfrak{fac}(\iota_i^*)$.
We now construct the desired marked permutation $\gamma^* $.

According to \cref{lm:jfacwdef}, for each $i$, the unique stable factorization of $\iota_i^*$ results from $\iota_i^* = \rho_{q^{(i)}_1}^* \star \dots \star \rho_{q^{(i)}_{j(\iota_i^*)}}^*$ by a permutation of the factors.
Thus, for each $i$ we can find indices $p^{(i)}_1, \dots , p^{(i)}_{j(\iota_i^*)}$ such that $\{p^{(i)}_1, \dots , p^{(i)}_{j(\iota_i^*)}\} = \{ q^{(i)}_1, \dots , q^{(i)}_{j(\iota_i^*)} \}$ and 
$$\iota^*_i = \rho_{q^{(i)}_1}^* \star \dots \star \rho_{q^{(i)}_{j(\iota_i^*)}}^* = \rho_{p^{(i)}_1}^* \star \dots \star \rho_{p^{(i)}_{j(\iota_i^*)}}^* $$
is the stable factorization of $\iota^*_i$.

For each $s\in [j]$, if $ s = q_u^{(i)}$ for some integers $i, u$, define $\tau_s \coloneqq \rho_{p_u^{(i)}}^*$ and finally define
$$\gamma^* = \tau_1^* \star \dots \star \tau_j^* \, . $$

\begin{figure}[h]
\centering
\includegraphics[scale=1]{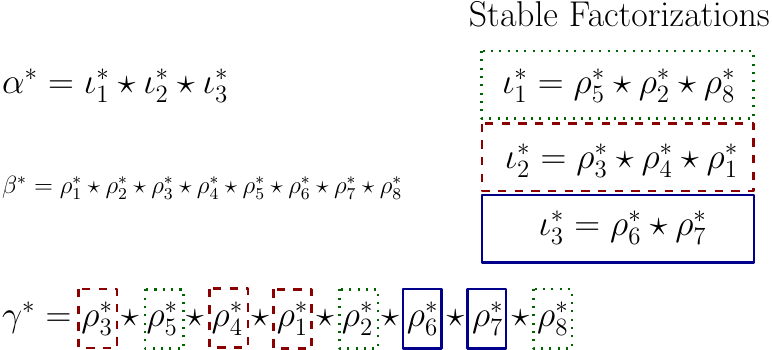}
\caption{\label{fig:gammaconstr} Example of construction of $\gamma^* $ for $j=8$.}
\end{figure}

A graphical description of this construction for $j=8$ is given in \cref{fig:gammaconstr}.
We claim that $\gamma^* $ satisfies the three conditions described in the lemma.
First, it is clear that $\mathfrak{fac}(\beta^* ) = \biguplus_{i=1}^k \mathfrak{fac}(\iota_i^*) = \mathfrak{fac}(\gamma )$, and that the disjoint increasing sequences $q_1^{(i)} < \dots < q^{(i)}_{j(\iota^*_i)}$ are such that 
$$ \tau_{q_1^{(i)}}^* \star \dots \star \tau_{q_{j(\iota_i^*)}^{(i)}}^* =  \rho_{p_1^{(i)}}^* \star \dots \star \rho_{p_{j(\iota_i^*)}^{(i)}}^*  $$
is precisely the stable factorization of $\iota^*_i$.
Thus, we need only to establish that $\beta^* \leq_{fac} \gamma^*$.

We claim that $(\tau_1^*, \dots , \tau_j^* ) \geq (\rho_1^* , \dots , \rho_j^*)$.
Indeed, $(\tau_1^*, \dots , \tau_j^* ) $ is obtained from $(\rho_1^* , \dots , \rho_j^*)$ by the stabilization procedure in its subwords.
From the stabilization procedure we obtain bigger words in the lexicographical order, according to \cref{rem:smallestfact}.
This still holds true even if only applied to a subword, thus the resulting word $(\tau_1^*, \dots , \tau_j^* ) $ is lexicographically bigger than $(\rho_1^* , \dots , \rho_j^*)$.


Now, if $\gamma^* = \epsilon_1^* \star \dots \star \epsilon_j^* $ is the stable factorization of $\gamma^*$, because $\gamma^* = \tau_1^* \star \dots \star \tau_j^* $ we have that from \cref{rem:smallestfact} that 
$$(\tau_1^*, \dots , \tau_j^* ) \leq (\epsilon_1^* , \dots , \epsilon_j^*) \, .$$
Thus 
$$(\rho_1^* , \dots , \rho_j^*) \leq (\epsilon_1^* , \dots , \epsilon_j^*) \, , $$
 and so $\beta^* \leq_{fac} \gamma^*$, as desired.
\end{proof}

In the following proof, we will start by showing that the given factorization of $\gamma^* $ can be assumed to be the stable factorization.
Then, we use \cref{thm:Lyndonfact} and the fact that the factorization of $\gamma^* $ is a shuffle of Lyndon words to establish the desired inequality.

\begin{proof}[Proof of \cref{lm:bplol}]
Write $j=j(\gamma^*)$.
We first see that if $\gamma^* $ has some factorization that is a word shuffle of stable words $l_1, 
\dots , l_k$, then the stable factorization is also a word shuffle of these words.

Indeed, take a factorization  $\tau_1^* \star \dots \star \tau_j^* $ such that $(\tau_1^* , \dots , \tau_j^*)$ is a word quasi-shuffle of $l_1, \dots , l_k$, and say that there is some $u \in \{1, \dots, j-1 \}$ such that this factorization is not $u$-$\oplus$-stable or is not $u$-$\ominus$-stable.
According to \cref{rem:stabred}, to show that the stable factorization is also a word shuffle of $l_1, \dots , l_k$, it suffices to show that the word resulting from swapping $\tau_u^*$ and $\tau_{u+1}^*$ in $(\tau_1^* , \dots , \tau_j^*)$ is still a word quasi-shuffle of $l_1, \dots , l_k$.

When we apply a $u$-stability reduction (see \cref{rem:stabred}), we can still find a suitable partition of $[j ]$ into $k$ many disjoint increasing sequences.
Say that $[j ]$ is partitioned into the blocks $\{q^{(i)}_1 < \dots < q^{(i)}_{j(\iota^*_i)} \}$, then there are integers $i_1, i_2, v_1, v_2$ such that $u= q^{(i_1)}_{v_1}$ and $u+1=q^{(i_2)}_{v_2}$.
Because $(\tau^*_{q^{(i)}_1}, \dots , \tau^*_{q^{(i)}_{j(\iota^*_i)}})$ is stable for each $i$, we cannot have $i_1 = i_2$.
Therefore, by swapping the elements $u, u+1$ we obtain a new partition for the new factorization, thus showing that it is a word quasi-shuffle of $l_1, \dots , l_k$.

Now let $ \rho_1^* \star \dots \star \rho_j^* $ be the stable factorization of $\gamma^* $.
Since $(\rho_1^* , \dots , \rho_j^*)$ is a word shuffle of $ l_1 , \dots , l_k$, which are Lyndon words in $\mathcal{W}(\Omega)$, we have from \cref{thm:lyndonshuf} that $(\rho_1^* , \dots , \rho_j^*) \leq l_1 \cdots l_k$, and so $\gamma^* \leq_{fac} l_1^* \star  \dots \star l_k^*$.
\end{proof}

%
%
%
%

\subsection{Primitive elements, growth rates and asymptotic analysis\label{sec:PrimEl}}

Recall that in \cref{sec:primelem} we define the space of primitive elements $P(H)$ of a Hopf algebra $H$ is the subspace of $H$ given by $\{a \in H | \Delta a = a \otimes 1 + 1 \otimes a\}$.
In the particular case of the pattern Hopf algebra $\mathcal{A}(\mathtt{MPer})$, its primitive space is spanned by $\left\{ \pat_{\pi^* } | \pi^* \text{ is  irreducible marked permutation} \right\}$, according to \cref{prop:primit}.
So, we are interested in enumerating the irreducible marked permutations.
We consider some generating functions:
\begin{itemize}
\item The power series $P^*(x) = \sum_{\pi^* \text{ marked permutation} } x^{|\pi^*|}= \sum_{n\geq 1} n \cdot n!\,  x^{n-1} $ counts marked permutations.

\item The power series $P(x) = \sum_{\pi \text{ permutation}} x^{|\pi|}= \sum_{n\geq 0} n! \, x^n $ counts permutations.

\item The power series $S^*(x) = \sum_{k\geq 0 } s_k x^k = \sum_{\pi^* \text{ irreducible marked permutation}} x^{|\pi^*|}$ counts irreducible marked permutations.
This is the main generating function that we aim to enumerate here.

\item The power series $S_o^*(x) = \sum_{k\geq 0 } so_k x^k = \sum_{\substack{\pi^* \text{ irreducible and indecomposable}\\ \text{ marked permutation }}} x^{|\pi^*|} $ counts irreducible marked permutations that are indecomposable, that is neither $\oplus$-decomposable nor $\ominus$-decomposable.

\item The power series $P^{\oplus}(x) = \sum_{\pi \text{ is } \oplus - \text{ decomposable }} x^{|\pi|}$ counts permutations that are $\oplus $-indecomposable. This also counts the permutations that are $\ominus $-indecomposable.
\end{itemize}

Because we have a unique factorization theorem on permutations under the $\oplus$ product, the coefficients of $P^{\oplus}(x)$ can be easily extracted via the following power series relation
$$\frac{1}{1 - P^{\oplus}(x)} = P(x) = \sum_{n\geq 0 } n! x^n\, , \text{ which implies } P^{\oplus} (x) = 1 -P(x)^{-1}\, . $$


\begin{obs}
Any $\oplus $-decomposable irreducible marked permutation is either of the form $\bar{1}\oplus \pi $ or of the form $\pi \oplus \bar{1} $ for $\pi$ an $\oplus$-indecomposable permutation, and symmetrically for $\ominus $-decomposable irreducible marked permutations.

Thus, we have
\begin{equation}\label{eq:pssimple}
S^*(x) - S_o^*(x) = 4P^{\oplus} (x)\, . 
\end{equation}
\end{obs}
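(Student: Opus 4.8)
The plan is to read the left-hand side $S^*(x) - S_o^*(x)$ as the generating function that enumerates, by size, the irreducible marked permutations which fail to be indecomposable, i.e.\ those that are $\oplus$-decomposable or $\ominus$-decomposable. Since $S_o^*$ counts a subset of what $S^*$ counts, this difference is exactly the generating function of the \emph{decomposable irreducible} marked permutations. I would then invoke the earlier characterisation of decomposable irreducible marked permutations, which asserts that these are precisely the four parametrised families $\bar{1}\oplus\pi$, $\pi\oplus\bar{1}$ (with $\pi$ an $\oplus$-indecomposable permutation) together with $\bar{1}\ominus\tau$, $\tau\ominus\bar{1}$ (with $\tau$ an $\ominus$-indecomposable permutation). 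This supplies the exhaustiveness half of the count for free, so the remaining work splits into two tasks: showing the four families are pairwise disjoint, and computing that each contributes exactly $P^{\oplus}(x)$.

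For disjointness I would track the position of the marked letter $*$ relative to the four extremes $m_P, M_P, m_V, M_V$, reading these off the definitions of $\oplus$ and $\ominus$ on marked permutations (cf.\ \cref{rem:oplusDCinterv}). In $\bar{1}\oplus\pi$ the element $*$ is the common minimum of $\leq_P$ and $\leq_V$; in $\pi\oplus\bar{1}$ it is their common maximum; in $\bar{1}\ominus\tau$ it is $\leq_P$-minimal and $\leq_V$-maximal; and in $\tau\ominus\bar{1}$ it is $\leq_P$-maximal and $\leq_V$-minimal. These four corner positions are mutually exclusive whenever $\pi,\tau$ are non-trivial, so a marked permutation can lie in at most one family; in particular no irreducible marked permutation is simultaneously $\oplus$- and $\ominus$-decomposable. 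I expect this bookkeeping to be the main (if mild) obstacle, since it is exactly what rules out the double counting that would otherwise spoil the coefficient $4$.

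For the enumeration of each family I would observe that $\pi\mapsto\bar{1}\oplus\pi$ is size-preserving and injective, because $\bar{1}$ has size $0$ and $\oplus$ adds sizes; hence $\{\bar{1}\oplus\pi\}$ and $\{\pi\oplus\bar{1}\}$ each have generating function $\sum_{\pi\ \oplus\text{-indec}} x^{|\pi|}=P^{\oplus}(x)$. Likewise the two $\ominus$-families are each in size-preserving bijection with the $\ominus$-indecomposable permutations, whose size distribution coincides with that of the $\oplus$-indecomposable permutations via the reverse/complement symmetry already recorded in the definition of $P^{\oplus}$, so each of these also contributes $P^{\oplus}(x)$. Summing the four disjoint contributions gives $S^*(x)-S_o^*(x)=4P^{\oplus}(x)$, which is \eqref{eq:pssimple}.
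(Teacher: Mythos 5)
Your proposal is correct and follows essentially the same route as the paper: the paper's observation rests on the earlier remark characterising the decomposable irreducible marked permutations as the four families $\bar{1}\oplus\pi$, $\pi\oplus\bar{1}$, $\bar{1}\ominus\tau$, $\tau\ominus\bar{1}$, each in size-preserving bijection with the $\oplus$- (equivalently $\ominus$-) indecomposable permutations, giving the factor $4P^{\oplus}(x)$. Your explicit corner-position argument for pairwise disjointness of the four families is a detail the paper leaves implicit, and it is a worthwhile addition since it is exactly what justifies the coefficient $4$.
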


The following proposition allows us to enumerate easily the irreducible marked permutations and the irreducible indecomposable marked permutations, as done in \cref{tbl:sseq}.

\begin{prop}[Power series of irreducible marked permutations]\label{prop:powseriesform}
$$S^*(x) = 3 + \frac{2}{P(x)^2} - \frac{1}{P'(x)} - \frac{4}{P(x)}\, .  $$
$$S_o^*(x)= -1 + \frac{2}{P(x)^2} - \frac{1}{P'(x)} \, . $$
\end{prop}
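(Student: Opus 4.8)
The plan is to count marked permutations through their unique stable factorization and read off $S^*$ from the resulting generating function. By \cref{thm:isomonoids} the monoid of marked permutations under $\star$ is isomorphic to $\mathcal{W}(\Omega)/_\sim$, and by \cref{cor:simpleUFT} every marked permutation has a unique \emph{stable} word on $\Omega$ as representative. Since $\star$ is additive on sizes, this size-preserving bijection (with the empty word corresponding to $\bar{1}$) gives $P^*(x) = \sum_w x^{|w|}$, the sum over stable words $w$ on $\Omega$. First I would stratify $\Omega$: by the observation preceding the statement, $\Omega$ is the disjoint union of the indecomposable irreducibles (generating function $S_o^*$) together with the four families $L_\oplus = \{\bar{1}\oplus \pi\}$, $R_\oplus = \{\pi \oplus \bar{1}\}$, $L_\ominus = \{\bar{1}\ominus \pi\}$, $R_\ominus = \{\pi \ominus \bar{1}\}$, where $\pi$ runs over the ($\oplus$- resp. $\ominus$-)indecomposable permutations; since $|\bar{1}\oplus \pi| = |\pi|$ and the $\ominus$- and $\oplus$-indecomposables are equinumerous in each size, each family has generating function $P^\oplus(x)$, which records \eqref{eq:pssimple}.

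The key structural input is that these four families are pairwise disjoint, distinguished by whether the marked point $*$ sits at the bottom-left, top-right, top-left, or bottom-right corner of the diagram, respectively. I would then translate \cref{defin:stab}: a word is stable exactly when it contains no consecutive pair in $L_\oplus \cdot R_\oplus$ (failure of $\oplus$-stability) and none in $R_\ominus \cdot L_\ominus$ (failure of $\ominus$-stability). The crucial observation is that two such forbidden occurrences can never overlap: the second letter of a forbidden pair lies in $R_\oplus$ or $L_\ominus$, whereas a forbidden pair starting at that letter would require it to lie in $L_\oplus$ or $R_\ominus$, which is impossible by disjointness. Hence in any word the forbidden occurrences occupy pairwise disjoint pairs of positions.

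With non-overlapping forbidden factors a routine inclusion--exclusion finishes the combinatorics: marking an arbitrary subset $S$ of the (automatically disjoint) forbidden occurrences of a word contributes $(-1)^{|S|}$, so summing over all words and all markings leaves exactly the stable words. Decomposing each marked word into blocks --- a single letter (total weight $S^*(x)$) or a marked forbidden pair (total weight $2P^\oplus(x)^2$, carried with sign $-1$) --- turns this into a geometric series, giving
\[ P^*(x) = \frac{1}{\,1 - S^*(x) + 2P^\oplus(x)^2\,}, \]
and hence $S^*(x) = 1 + 2P^\oplus(x)^2 - 1/P^*(x)$.

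It then remains to substitute $P^*(x) = P'(x)$ (immediate from the stated series, reflecting that a marked permutation of size $n$ is a size-$(n+1)$ permutation with a marked position) together with $P^\oplus(x) = 1 - 1/P(x)$; expanding $2(1-1/P)^2 = 2 - 4/P + 2/P^2$ yields the first formula, and subtracting $4P^\oplus(x) = 4 - 4/P(x)$ via \eqref{eq:pssimple} yields the formula for $S_o^*$. The main obstacle is the translation step combined with the non-overlap property: one must align \cref{defin:stab} precisely with the forbidden factors $L_\oplus R_\oplus$ and $R_\ominus L_\ominus$ and confirm that the four corner families are genuinely disjoint, since it is this disjointness that both collapses the inclusion--exclusion to the clean reciprocal above and fixes the forbidden-pair weight at exactly $2P^\oplus(x)^2$. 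The remaining power-series manipulations are routine.
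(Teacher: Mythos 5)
Your argument is correct, but it reaches the formulas by a genuinely different route from the paper. The paper never touches the stable-word encoding here: it partitions the set of all marked permutations into $\bar{1}$, the nontrivial indecomposables (each of which, by \cref{lm:simplecont}, factors uniquely as an indecomposable irreducible inflated with an arbitrary marked permutation), the $\oplus$-decomposables, and the $\ominus$-decomposables; this yields the linear relation \eqref{eq:simplemppow}, $P^* = S_o^* P^* + 2(P-P^{\oplus})' + 1$, from which $S_o^*$ is solved for first and $S^*$ then obtained by adding $4P^{\oplus}$ via \eqref{eq:pssimple}. You instead invoke the full unique stable factorization (\cref{thm:isomonoids} and \cref{cor:simpleUFT}) to identify $P^*$ with the generating function of stable words on $\Omega$, and run a Goulden--Jackson-style inclusion--exclusion over the forbidden adjacent pairs $L_\oplus R_\oplus$ and $R_\ominus L_\ominus$; the pairwise disjointness of the four corner families (the same fact that makes $S^*-S_o^* = 4P^{\oplus}$) is exactly what forces forbidden occurrences to be non-overlapping and collapses the computation to $P^* = \left(1 - S^* + 2P^{\oplus}(x)^2\right)^{-1}$. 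Your route costs more machinery --- the paper's proof needs only the uniqueness of the irreducible left factor of an indecomposable marked permutation, not the global factorization theorem --- but it buys the clean reciprocal identity relating $P^*$, $S^*$ and $P^{\oplus}$, and it produces $S^*$ directly rather than via $S_o^*$. Both derivations yield the stated formulas (and agree with the table of $s_n$ and $so_n$).
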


We compare this result with the enumeration of simple permutations in \cite[Equation 1]{albert03}, where the power series enumerating simple permutations is given as the solution of a functional equation that is not rational.
Thus, we expect it to be simpler to compute the coefficients explicitly.

\begin{table}[h]
\centering
\begin{tabular}{ l | c | c | c | c | c | c | c | c | c | c   }
 $n$ & 0 & 1 & 2 & 3 & 4 & 5 & 6 & 7 & 8 & 9 \\ \hline
 $so_n$ & 0& 0& 0& 8& 78& 756& 7782& 85904& 1016626& 12865852 \\ \hline
 $s_n$ & 0& 4& 4& 20& 130& 1040& 9626& 99692& 1132998& 13959224
\end{tabular}
\caption{First elements of the sequences $so_n$ and $s_n $.\label{tbl:sseq}}
\end{table}

\begin{lm}
Let $\pi^* $ be a marked permutation.
Then, there are four cases
\begin{itemize}
\item There are unique marked permutations $\sigma^*$ and $\alpha^* $ such that $\sigma^*$ is indecomposable and irreducible, and $\pi^* = \sigma^* * \alpha^* $.

\item The marked permutation $\pi^*$ is  $\oplus $-decomposable.

\item The marked permutation $\pi^*$ is  $\ominus $-decomposable.

\item $\pi^* = \bar{1}$.
\end{itemize}

In particular, we have the following equation
\begin{equation}\label{eq:simplemppow}
P^*(x) =S_o^*(x) P^*(x) + 2  ( P(x) - P^{\oplus } (x) )'  +1 \, . \end{equation}
\end{lm}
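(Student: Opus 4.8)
The plan is to first show that the four listed cases partition the set of marked permutations, and then to convert each case into a power series and read off \eqref{eq:simplemppow}. Exhaustiveness is almost immediate: a marked permutation that is not $\bar 1$ and is neither $\oplus$- nor $\ominus$-decomposable is a non-trivial indecomposable marked permutation, and \cref{lm:simplecont} hands us a unique factorization $\pi^* = \sigma^* \star \alpha^*$ with $\sigma^*$ irreducible. For mutual exclusivity, the second and third cases are disjoint by the elementary fact that a marked permutation cannot be simultaneously $\oplus$- and $\ominus$-decomposable: comparing the prefix carrying the smallest values (from a $\oplus$-splitting) with the prefix carrying the largest values (from a $\ominus$-splitting), one prefix contains the other and a size count gives a contradiction. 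The first case is disjoint from the second and third once we verify that its $\sigma^*$ can be taken indecomposable, which I do next.

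The key structural input is \cref{lm:opcontinuity} together with its $\ominus$-counterpart, used in both directions. If $\sigma^*$ is an indecomposable irreducible marked permutation and $\alpha^*$ is arbitrary, then $\sigma^*$ is a leftmost irreducible factor of $\sigma^* \star \alpha^*$, and the contrapositive of \cref{lm:opcontinuity} (applied to $\oplus$ and to $\ominus$) shows $\sigma^* \star \alpha^*$ is indecomposable. Conversely, if $\pi^*$ is non-trivial and indecomposable, with unique factorization $\pi^* = \sigma^* \star \alpha^*$ from \cref{lm:simplecont}, then $\sigma^*$ must itself be indecomposable: were $\sigma^* = \bar 1 \oplus \lambda$ or $\sigma^* = \lambda \oplus \bar 1$, computing the inflation gives $\pi^* = \alpha^* \oplus \lambda$ or $\pi^* = \lambda \oplus \alpha^*$, contradicting $\oplus$-indecomposability of $\pi^*$ (and symmetrically for $\ominus$). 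Thus the first case is exactly the non-trivial indecomposable marked permutations, and they form a disjoint union, over indecomposable irreducible $\sigma^*$, of the images of the injective maps $\alpha^* \mapsto \sigma^* \star \alpha^*$ (injectivity from \cref{rem:integerdomain} or \cref{lm:simplecont}). Since $|\sigma^* \star \alpha^*| = |\sigma^*| + |\alpha^*|$, this case contributes $S_o^*(x)\, P^*(x)$, and the fourth case contributes the single term $1$ coming from $\bar 1$.

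It remains to identify the joint contribution of the $\oplus$- and $\ominus$-decomposable cases as $2(P(x) - P^{\oplus}(x))'$. The vertical-reflection symmetry exchanging $\leq_V$ with its reverse sends $\oplus$ to $\ominus$ while fixing sizes and the marked element, so the two cases are equinumerous in each size and it suffices to treat the $\oplus$-case and double it. Here I would use that a marked permutation of size $m$ is precisely a permutation of size $m+1$ with one distinguished element, and that $\pi^*$ is $\oplus$-decomposable if and only if its underlying permutation is (forget the mark in one direction; split the underlying permutation and note the mark lies in one block while the other is a non-trivial ordinary permutation in the other). Writing $P(x) - P^{\oplus}(x) = 1 + \sum_{n\geq 1} d_n x^n$ with $d_n$ the number of $\oplus$-decomposable permutations of size $n$, this ``pointing'' multiplies the coefficient of $x^n$ by $n$ and drops the degree by one, giving $\sum_{m\geq 0}(m+1)d_{m+1}x^m = (P(x)-P^{\oplus}(x))'$; summing the four contributions yields \eqref{eq:simplemppow}. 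The main obstacle is precisely this last bookkeeping: one must match the ``pointing equals differentiation'' principle to the convention $|\pi^*| = |X|$ (so size-$m$ marked permutations correspond to pointed size-$(m+1)$ permutations) and cleanly verify the $\oplus$-decomposability equivalence, including the boundary cases where one block reduces to the single marked element.
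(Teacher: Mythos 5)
Your proof is correct and follows essentially the same route as the paper: the structural dichotomy rests on \cref{lm:simplecont} (with \cref{lm:opcontinuity} invoked to pin down that the leading irreducible factor is indecomposable), and the power series identity comes from the same observation that $(P(x)-P^{\oplus}(x))'$ enumerates the $\oplus$-decomposable marked permutations. You supply several details the paper leaves implicit --- the mutual exclusivity of the four cases and the pointing-equals-differentiation bookkeeping --- but the underlying argument is the same.
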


\begin{proof}
If $\pi^* $ is indecomposable, then, either $\pi^* = \bar{1}$, or there are unique marked permutations $\sigma^*$ and $\alpha^* $ such that $\sigma^*$ is indecomposable and irreducible, and $\pi^* = \sigma^* * \alpha^* $, according to \cref{lm:simplecont}.
This concludes the first part of the lemma.

Further, we observe that
$$(  P(x) - P^{\oplus } (x) )' $$
counts the marked permutations that are $\oplus $-decomposable (and by symmetry, the ones that are $\ominus$-decomposable).
\end{proof}

%
%
%
%
%

\begin{proof}[Proof of \cref{prop:powseriesform}]
From \eqref{eq:simplemppow} along with the fact that $P^*(x) = P'(x)$, we have that
$$S_o^*(x)= -1 + 2P(x)^{-2} - P'(x)^{-1} \, , $$
$$S^*(x) = 3 + 2 P(x)^{-2} - P'(x)^{-1} - 4P(x)^{-1}\, ,  $$
as desired.
\end{proof}

\section*{Acknowledgments}

The author is  thankful for the support of the SNF grant number 200020-172515.
The author is also grateful for conversations with Yannic Vargas about pattern Hopf algebras and his insightful comments, and to the anonymous referees.
He is also thankful for conversations with Valentin F\'eray, and the great guidance throughout the process of development of this paper.

\bibliographystyle{alpha}

\end{document}